\documentclass[11pt]{amsart}
\usepackage{mathtools}
\usepackage{amssymb,latexsym,esint,cite,mathrsfs}
\usepackage{verbatim,wasysym}
\usepackage[left=2.6cm,right=2.6cm,top=2.9cm,bottom=2.9cm]{geometry}
\usepackage[T1]{fontenc}
\usepackage{lmodern}
\usepackage[colorlinks=true,urlcolor=blue, citecolor=red,linkcolor=blue,
linktocpage,pdfpagelabels, bookmarksnumbered,bookmarksopen]{hyperref}
\usepackage{tikz,enumitem,graphicx, subfig, microtype, color}
\usepackage{epic,eepic}
\usepackage[english]{babel}
\usepackage{xcolor}

\allowdisplaybreaks

\renewcommand{\epsilon}{\varepsilon}

\numberwithin{equation}{section}
\newtheorem{theorem}{Theorem}[section]
\newtheorem{proposition}[theorem]{Proposition}
\newtheorem{lemma}[theorem]{Lemma}

\theoremstyle{definition}

\newcommand{\R}{\mathbb{R}}

  % Divergence operator
  % Principal value
  % Minimum
  % Maximum

\begin{document}

\title[Multiple blowing-up solutions for Lane-Emden system ]
{Multiple blowing-up solutions for a slightly critical Lane-Emden system with non-power nonlinearity}

\author[S.\ Deng]{Shengbing Deng}
\author[F.\ Yu]{Fang Yu}

\address[S.\ Deng]{School of Mathematics and Statistics \newline\indent
Southwest University \newline\indent
Chongqing 400715, People's Republic of China.}
\email{shbdeng@swu.edu.cn}

\address[F.\ Yu]{School of Mathematics and Statistics \newline\indent
Southwest University \newline\indent
Chongqing 400715, People's Republic of China.}
\email{fangyumath@163.com}

\subjclass[2020]{35B33; 35B40; 35J15}

\keywords{Multiple blowing-up solutions; non-power nonlinearity;  Lane-Emden system;  Lyapunov-Schmidt reduction.}

\begin{abstract}
In this paper, we study the following Lane-Emden system with nearly critical non-power nonlinearity
\begin{eqnarray*}
\left\{ \arraycolsep=1.5pt
   \begin{array}{lll}
-\Delta u =\frac{|v|^{p-1}v}{[\ln(e+|v|)]^\epsilon}\ \   &{\rm in}\ \Omega, \\[2mm]
-\Delta v =\frac{|u|^{q-1}u}{[\ln(e+|u|)]^\epsilon}\ \   &{\rm in}\ \Omega, \\[2mm]
u= v=0 \ \  & {\rm on}\ \partial\Omega,
\end{array}
\right.
\end{eqnarray*}
where $\Omega$ is a bounded smooth domain in $\R^N$, $N\geq 3$, $\epsilon>0$ is a  small parameter, $p$ and $q $ lying on the critical Sobolev hyperbola $\frac{1}{p+1}+\frac{1}{q+1}=\frac{N-2}{N}$.
We construct multiple blowing-up solutions based on  the finite dimensional Lyapunov-Schmidt reduction method as $\epsilon$ goes to zero.
\end{abstract}

\maketitle

\section{Introduction}\label{intro}

Our paper concerns the following Lane-Emden system
\begin{eqnarray}\label{eqa}
\left\{ \arraycolsep=1.5pt
   \begin{array}{lll}
-\Delta u =\frac{|v|^{p-1}v}{[\ln(e+|v|)]^\epsilon}\ \   &{\rm in}\ \Omega, \\[2mm]
-\Delta v =\frac{|u|^{q-1}u}{[\ln(e+|u|)]^\epsilon}\ \   &{\rm in}\ \Omega, \\[2mm]
u,v=0 \ \  & {\rm on}\ \partial\Omega,
\end{array}
\right.
\end{eqnarray}
where $\Omega$ is a bounded smooth domain in   $\R^N$, $N\geq 3$,
$\epsilon>0$ is a  small parameter,
$p$, $q $ lying on the critical
Sobolev hyperbola
\begin{equation}\label{lesem}
\frac{1}{p+1}+\frac{1}{q+1}=\frac{N-2}{N},
\end{equation}
which has been introduced by Mitidieri \cite{mi}.
The  Lane-Emden system   is used in physics to model spatial phenomena in a variety of biological and chemical fields.
The origin of this concept dates back to the papers \cite{cfm,pel}  for
several considerations and motivations behind them.

In the case $\epsilon=0$, $u=v$ and $p=q$, (\ref{eqa}) reduces to
the celebrated Lane-Emden-Fowler problem
\begin{equation}\label{leem}
\begin{cases}
-\Delta u=|u|^{p-1}u \ \ & {\rm in}\ \Omega,\\
u=0 \ \  & {\rm on}\ \partial\Omega,
\end{cases}
\end{equation}
which  plays a central role in the development of methods of nonlinear analysis in the last four decades.
Since the compactness of Sobolev's embedding holds,
there exist at least one positive solution and infinitely many sign-changing solutions to problem (\ref{leem})  provided that
\[
1<p<p_S=
\begin{cases}
 +\infty &  \mathrm{if~}N\leq 2,\\
 \frac{N+2}{N-2} &\mathrm{if~}N\geq 3.
\end{cases}
\]
While for $p\geq \frac{N+2}{N-2}$,
the Pohozaev identity \cite{Poh}  showed that problem (\ref{leem}) has no positive solution
if the domain $\Omega$ is strictly starshaped.
Kazdan and Warner \cite{kaz} established infinitely many radial solutions in an annulus domain.
For the critical case  $p=\frac{N+2}{N-2}$, Bahri and Coron obtained the existence of a positive solution of (\ref{leem}) if the domain $\Omega$
has nontrivial reduced homology with $\mathbb{Z}_2$-coefficient.
If the domain $\Omega$ has  small holes, the size of the hole  effects the number of sign-changing solutions of problem (\ref{leem}), see \cite{ge1,ge2}.

The natural counterpart of  (\ref{leem}) for elliptic systems is the following Lane-Emden system
\begin{eqnarray}\label{pure}
\left\{ \arraycolsep=1.5pt
   \begin{array}{lll}
-\Delta u = |v|^{p-1}v\ \   &{\rm in}\ \Omega, \\[2mm]
-\Delta v = |u|^{q-1}u \ \   &{\rm in}\ \Omega, \\[2mm]
u,v=0 \ \  & {\rm on}\ \partial\Omega,
\end{array}
\right.
\end{eqnarray}
where $p$, $q>1$.
Compared to the single case,
since  the quadratic
part of the functional of system (\ref{pure}) is strongly indefinite,
which leads to the   mountain pass theorem do not work.
However, there are some  ways to get the existence of
solutions, such as  the linking theorem, see \cite{bey,dep,hum3} and references therein.
Mitidieri \cite{mit,mi} pointedx out that  the Sobolev critical exponent $p_S$ should be
played by the so-called Sobolev hyperbola (\ref{lesem}),
which is crucial for the existence of solutions.
%In the bounded domain,
By the work  of \cite{mi},
the generalized Pohozaev identity for (\ref{pure}) can be written as
\begin{equation}\label{poh}
\Big(\frac{N}{p+1}-\alpha\Big)\int_{\Omega}|u|^{p+1}dx
+\Big(\frac{N}{q+1}-(N-2-\alpha)\Big)\int_{\Omega}|v|^{q+1}dx
=\int_{\partial\Omega}\frac{\partial u}{\partial\nu}\frac{\partial v}{\partial\nu}d\sigma,
\end{equation}
for every $\alpha>0$.
When $\alpha=\frac{N}{p+1}$ and $(p,q)$ lies on or above the critical hyperbola,  that is
\begin{equation*}\label{lessem}
\frac{1}{p+1}+\frac{1}{q+1}\leq \frac{N-2}{N},
\end{equation*}
The identity (\ref{poh}) gives   the nonexistence of positive solutions on star shaped domains.
Then,  the  works in \cite{blmr,hum} show  that the condition of existence   theory is   ($p,q$) satisfying  the so called  subcriticality, that is
\begin{equation}\label{qssem}
\frac{1}{p+1}+\frac{1}{q+1}>\frac{N-2}{N}.
\end{equation}
It may  hold  that $p<\frac{N+2}{N-2}<q$ for $N\geq3$,
then  the energy functional does not be well defined on $H_0^1(\Omega)\times H_0^1(\Omega)$.
In a variational setting, the  existence of  sign-changing solution  has been studied under a variety of assumptions in a large number of papers,  \cite{clap,cacn} for a single
equation, for systems, see \cite{clapq,clapw,Gue,ck},
in particular, \cite{blm} for H\'{e}non-Lane-Emden system.

System  (\ref{pure}) with critical nonlinearity  has been extensively studied in the past
decade years and many results are known regarding existence, multiplicity and  concentration phenomena.
One direction is the nearly critical hyperbola,
by the moving plane method,
Guerra \cite{Gue} obtained the uniform boundedness of least energy solutions to (\ref{pure}) on convex domains under that $p\in (\frac{2}{N-2}, \frac{N+2}{N-2}]$ and
\begin{equation}\label{qe}
\frac{1}{p+1}+\frac{1}{q_\epsilon+1}=\frac{N-2}{N}+\epsilon,
\end{equation}
for small $\epsilon>0$.
Observe that (\ref{qe}) satisfies the  subcritical condition (\ref{qssem}),
when $\epsilon\rightarrow0$, it   approaches to the critical line (\ref{lesem}).
Then, Choi and  Kim \cite{ckm} removed the convexity assumption,
and they obtained the similar results
by local Pohozaev-type identities and sharp pointwise estimates of the solutions.
Jin and Kim \cite{jin} extended the Coron's result \cite{cor} to (\ref{pure}) with critical hyperbola (\ref{lesem}) provided that $\Omega$ has a sufficiently small hole,
they constructed a  family of positive solutions  concentrating
around the center of the hole   by a perturbation argument.
In the supercritical case, more precisely, $\frac{1}{p+1}+\frac{1}{q+1}\to\frac{N-k-2}{N-k}$,
$\frac{N-k-2}{N-k}<\frac{1}{p+1}+\frac{1}{q+1}<\frac{N-2}{N}$,
where the constant  $k$ ($1\leq k\leq N-3$) is the dimension of    sub-manifolds of  $\partial\Omega$,
Guo et al. \cite{uls} established  multi-bubbling solutions  with layers concentrating along one
or several $k$-dimensional sub-manifolds.

Another direction,
motivated by the classical results of Br\'{e}zis and Nirenberg \cite{bre},
Hulshof et al. \cite{hum} considered the version of a lower order perturbation of  Lane-Emden critical system (\ref{pure}) in a bounded domain for $N\geq 3$ and \cite{abc} states  a new phenomenon for $N=3$.
By  Lyapunov-Schmidt reduction method,
Kim and Pistoia \cite{kpm} also considered the following   Brezis-Nirenberg type problem for $N\geq8$ and $p\in(1,\frac{N-1}{N-2})$
\begin{eqnarray}\label{kitm}
\left\{ \arraycolsep=1.5pt
   \begin{array}{lll}
-\Delta u = |v|^{p-1}v+\epsilon(\alpha u+\beta_1v) \ \   &{\rm in}\ \Omega, \\[2mm]
-\Delta v = |u|^{q-1}u +\epsilon(\beta_2u+\alpha v)\ \   &{\rm in}\ \Omega, \\[2mm]
u,v=0 \ \  & {\rm on}\ \partial\Omega,
\end{array}
\right.
\end{eqnarray}
where $\alpha$, $\beta_1$ and $\beta_2$  are real numbers,
they  first constructed a single bubble  in general bounded domain  and multiple bubbles solution  in the dumbbell-shaped domain.
Meanwhile, they also obtained the  existence of  multiplicity results for a slightly subcritical system
\begin{eqnarray}\label{stemsd}
\left\{ \arraycolsep=1.5pt
   \begin{array}{lll}
-\Delta u = v^{p-\alpha\epsilon}\ \   &{\rm in}\ \Omega, \\[2mm]
-\Delta v = u^{q-\beta\epsilon}\ \   &{\rm in}\ \Omega, \\[2mm]
u, v >0 \ \   &{\rm in}\ \Omega, \\[2mm]
u,v=0 \ \  & {\rm on}\ \partial\Omega.
\end{array}
\right.
\end{eqnarray}
Moreover, due to the local Pohozaev identities, Guo et al. \cite{ghp} obtained the  non-degeneracy of these   blowing-up   solutions,
which plays an important role in the construction
of new solutions.
Under the subcritical  (\ref{qe}) and $\max\{1,\frac{3}{N-2}\}<p<q$ for $N\geq 3$,
 Kim and Moon \cite{mkim}  give  a detailed qualitative and quantitative
description of the asymptotic behavior for
all positive bubbling solutions  in the convex domain.
Many important contributions have been made towards the solution of  system (\ref{stemsd}),
 we refer the interested reader to \cite{ugo} for  Lane-Emden system with Neumann boundary conditions and many others.

The above cases with  power type nonlinearity  have been extensively
studied due to the Lyapunov-Schmidt reduction method.
In the more special case of non-power nonlinearity, the problem becomes more complex and has been the subject of ongoing research in the field.
The seminal work of Castro and Pardo \cite{cp1}
has played a crucial role in the development of this problem,
their objective   is to prove  the   existence of a priori $L^\infty$
bounds for positive solutions of Laplacian
problem,
where the right hand side nonlinear term  have a slightly subcritical growth,
more precisely,
\[
f(u)=\frac{u^{\frac{N+2}{N-2}}}{[\ln(2+u)]^\epsilon}\quad\mbox{with}\ \epsilon>\frac{2}{N-2}.
\]
Then, Mavinga and Pardo \cite{np} has been successfully  obtained the same results to system (\ref{eqa}),
also for parameterized version.
Rigorously considering  priori $L^\infty$
bounds for positive solutions for the case of non-power nonlinearity have been analyzed in larger generality in
\cite{cp2,CUD,rp2} for slightly subcritical problem,
\cite{deng,ddep}  for supercritical case,
 and \cite{rm1,rp2} for studying the asymptotic behavior
of radially symmetric solutions to quasilinear $p$-Laplacian systems.

So far, there are only few results involving the non-power nonlinearity
provided that $\epsilon\rightarrow0$,
\begin{eqnarray}\label{eqad}
\left\{ \arraycolsep=1.5pt
   \begin{array}{lll}
-\Delta u =\frac{|u|^{\frac{4}{N-2}}u}{[\ln(e+|u|)]^{\epsilon}}\ \   &{\rm in}\ \Omega, \\[2mm]
u=0 \ \  & {\rm on}\ \partial\Omega.
\end{array}
\right.
\end{eqnarray}
Clapp et al. \cite{mp} provided the
first construction of   a single bubble solution for  problem (\ref{eqad}) in the bounded domain by Lyapunov-Schmidt reduction method,
where the solution concentrate at the nondegenerate critical of Robin function.
Since then, there are many works looking for solutions with
single and multiple peaks and investigating the location of the asymptotic spikes
as well as their profile as $\epsilon\rightarrow0$.
More specifically,
the investigations conducted in \cite{mp} lead to similar versions
for critical H\'{e}non problem \cite{ZZW}  and \cite{ZZWs} for  Schr\"{o}dinger equation in $\mathbb{R}^N$ for $N\geq 7$.
Another   results of the existence of multiple positive as well as sign changing solutions are allowed for (\ref{eqad})   by
Ben Ayed et al. \cite{bay}.

Motivated by the previous observations and  by the help of finite Lyapunov-Schmidt dimensional reduction procedure,
we intend to construct positive  solutions ($u_\epsilon,v_\epsilon$) to
system (\ref{eqa}),
whose shape around each blow-up point resembles a bubble in the dumbbell-shaped domain $\Omega_\eta$ that we obtain by connecting $l$ disjoint domains $\Omega_1^*\cdots\Omega_l^*$,
with $l-1$ necks of thickness less than a small number $\eta>0$.
More precisely,
let $a_1<b_1<a_2<\cdots<b_{l-1}<a_l<b_l$,
\begin{align*}
\begin{aligned}
& \Omega_i^*\subset \{(x_1,x')\in\mathbb{R}\times\mathbb{R}^{N-1}
:a_i\leq x_1\leq b_i \}
\quad
\text{and}\\
& \Omega_i^*\cap \{(x_1,x')\in\mathbb{R}\times\mathbb{R}^{N-1}:x'=0 \}
\neq\emptyset,
\end{aligned}
\end{align*}
for $i=1,\ldots,k$ and set the $\eta$-neck,
\[
\mathcal{N}_\eta
=\{(x_1,x')\in\mathbb{R}\times\mathbb{R}^{N-1}
:x_1\in(a_1,b_k),|x'|<\eta
\}.
\]
Let $\Omega_0=\cup_{i=1}^l\Omega_i^*$ and $\{\Omega_\eta\}_{\eta>0}$ be a family of smooth (connected) domains such that
\[
\Omega_0\subset\Omega_\eta\subset\Omega_0\cup\mathcal{N}_\eta
\quad\mathrm{and}\quad\Omega_{\eta_1}\subset\Omega_{\eta_2}
\quad\mathrm{for~}\eta_1\leq\eta_2.
\]
The   dumbbell-shaped domain  mentioned here is derived from  \cite{map,danc,dan},
where Musso and Pistoia \cite{map} focused on the multispike solutions for a nearly critical elliptic problem.

Our main results can be stated as followsx.

\begin{theorem}\label{kia}
Let $\Omega$ be a bounded smooth domain in   $\R^N$ with $N\geq 3$, $p\in(1,\frac{N-1}{N-2})$, $(p,q)$ satisfies (\ref{lesem}).
Then there exists a small number $\epsilon_0>0$
such that for any  $\epsilon\in(0,\epsilon_0)$, system (\ref{eqa}) has a solution with
blowing-up at one point in $\Omega$ as  $\epsilon\rightarrow0$.
\end{theorem}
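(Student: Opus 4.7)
The plan is to carry out a single-bubble Lyapunov--Schmidt reduction modelled on the limit critical Lane--Emden system on $\R^N$. Since $p<\frac{N-1}{N-2}$ forces $q>\frac{N+2}{N-2}$, the energy functional is strongly indefinite and not well defined on $H_0^1\times H_0^1$; I would therefore work in the dual reduction framework of \cite{hum,kpm}, recasting (\ref{eqa}) as a fixed-point problem in $W^{2,r}(\Omega)\cap W_0^{1,r}(\Omega)$ for a suitable exponent $r=r(p)$. Let $(U,V)$ denote the Hulshof--van der Vorst radial positive ground state of $-\Delta U=V^p$, $-\Delta V=U^q$ on $\R^N$, and consider the bubble family
\[
U_{\delta,\xi}(x)=\delta^{-\frac{N}{q+1}}U\!\bigl(\tfrac{x-\xi}{\delta}\bigr),\qquad
V_{\delta,\xi}(x)=\delta^{-\frac{N}{p+1}}V\!\bigl(\tfrac{x-\xi}{\delta}\bigr),
\]
parametrised by $(\delta,\xi)\in(0,+\infty)\times\Omega$. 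Let $PU_{\delta,\xi}$, $PV_{\delta,\xi}$ be their Dirichlet projections on $\Omega$, whose boundary corrections are governed by the Green and Robin functions of $-\Delta$. I would seek a solution in the form $(u,v)=(PU_{\delta,\xi}+\phi,\,PV_{\delta,\xi}+\psi)$, with $(\phi,\psi)$ orthogonal (in the natural duality pairing) to the $(N+1)$-dimensional kernel of the linearised operator at the bubble, spanned by the derivatives of $(U_{\delta,\xi},V_{\delta,\xi})$ with respect to $\delta$ and to each of the $N$ coordinates of $\xi$.

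The linear theory rests on the non-degeneracy of the limit bubble proved in \cite{ghp}: the linearisation at $(PU_{\delta,\xi},PV_{\delta,\xi})$ is uniformly invertible on the orthogonal complement of this kernel in appropriate weighted $L^\infty$ or weighted $L^r$ spaces. A contraction-mapping argument then produces the correction $(\phi_{\epsilon,\delta,\xi},\psi_{\epsilon,\delta,\xi})$ for each admissible pair $(\delta,\xi)$, with norm controlled by that of the error of the initial ansatz: of order $O(\delta^{N-2})$ from the Dirichlet projection, plus $O(\epsilon\ln\ln(1/\delta))$ from the non-power perturbation of the nonlinearity. After this reduction, critical points of a smooth finite-dimensional functional $\mathcal{J}_\epsilon(\delta,\xi)$ correspond one-to-one to exact solutions of (\ref{eqa}) of the desired form.

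The heart of the argument is the expansion of $\mathcal{J}_\epsilon$ as $\epsilon\to 0$. Using the identity $a^{-\epsilon}=1-\epsilon\ln a+O(\epsilon^2)$ together with the asymptotics $\ln(e+U_{\delta,\xi}(x))=\frac{N}{q+1}\ln(1/\delta)+O(1)$ inside the concentration region and $\ln(e+U_{\delta,\xi}(x))=O(1)$ outside, a careful splitting of the integrals should yield
\[
\mathcal{J}_\epsilon(\delta,\xi)=A_0+A_1\,\delta^{N-2}H(\xi,\xi)+A_2\,\epsilon\,\ln\ln(1/\delta)+o\bigl(\delta^{N-2}+\epsilon\ln\ln(1/\delta)\bigr),
\]
with explicit nonzero constants $A_0,A_1,A_2$ whose signs make the two perturbative terms compete in $\delta$. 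The $\delta^{N-2}H(\xi,\xi)$ contribution is the classical Robin-function correction coming from the Dirichlet projection, while the $\epsilon\ln\ln(1/\delta)$ term is the fingerprint of the logarithmic non-power nonlinearity; the double logarithm reflects that $\ln(e+U_{\delta,\xi})$ is itself of size $\ln(1/\delta)$ at the peak of the bubble.

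To conclude, I would pick $\xi_0\in\Omega$ a topologically detectable critical point of $\xi\mapsto H(\xi,\xi)$; such a point always exists because $H(\xi,\xi)\to+\infty$ as $\xi\to\partial\Omega$. Balancing $\partial_\delta(\delta^{N-2}H)$ against $\partial_\delta(\epsilon\ln\ln(1/\delta))$ then yields a concentration scale $\bar\delta=\bar\delta(\epsilon)$ satisfying $\bar\delta^{N-2}\ln(1/\bar\delta)\sim\epsilon$, equivalently $\bar\delta\sim\bigl(\epsilon/|\ln\epsilon|\bigr)^{1/(N-2)}$, so that $\bar\delta\to 0$ and the predicted blow-up at $\xi_0$ occurs. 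A local Brouwer-degree (or min--max) argument on a small neighbourhood of $(\bar\delta,\xi_0)$ produces a critical point of $\mathcal{J}_\epsilon$, hence a genuine solution of (\ref{eqa}). I expect the main obstacle to be the sharp expansion of $\mathcal{J}_\epsilon$ above: because the logarithmic correction varies only doubly-logarithmically in $\delta$, the $C^1$-estimates on the remainders must be sharpened beyond the generic $O(\epsilon)$ pointwise bounds, and this is precisely where the hypothesis $p<\frac{N-1}{N-2}$ is used, ensuring that the weaker component $u$ enjoys the $L^r$ theory uniformly in $(\delta,\xi)$.
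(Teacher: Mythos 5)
Your blueprint---dual Lyapunov--Schmidt reduction in mixed $W^{2,r}$ spaces, with the reduced problem resolved by balancing a Robin-type projection error against an $\epsilon$-logarithmic correction---is in the right spirit, but you have carried scalar-equation intuition over to the system too literally, and several system-specific features that the paper explicitly handles are missing. First, the ansatz cannot concentrate around the na\"ive Dirichlet projections $(PU_{\delta,\xi},PV_{\delta,\xi})$: because $p<\tfrac{N+2}{N-2}<q$, the projection error in the $u$-component is too large, and the paper replaces $PU$ by a refined second approximation $\mathbf{P}U_{\mathbf{d},\boldsymbol\xi}$ solving $-\Delta\mathbf{P}U_{\mathbf{d},\boldsymbol\xi}=\bigl(\sum_iPV_i\bigr)^p$ in $\Omega$; without this the fixed-point estimates and the expansion of the reduced equation fail at the required order. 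Second, the $u$-bubble decays like $|x|^{-[(N-2)p-2]}$, not $|x|^{-(N-2)}$, so the Robin-type contribution to the reduced equation is $O(\mu^{(N-2)p-2})=O(\mu^{N(p+1)/(q+1)})$, not $O(\delta^{N-2})$; since $p\in(1,\tfrac{N-1}{N-2})$ gives $(N-2)p-2\in(N-4,N-3)<N-2$, the two exponents are genuinely different. Moreover, the blow-up point is a critical point of the regular part $\widetilde H(\xi,\xi)$ of the modified Green function $\widetilde G$ defined by $-\Delta\widetilde G(\cdot,\xi)=G(\cdot,\xi)^p$, not of the classical Robin function $H(\xi,\xi)$; your $A_1\delta^{N-2}H(\xi,\xi)$ term is wrong on both counts.

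The $\epsilon$-correction is also miscomputed. Your $A_2\,\epsilon\ln\ln(1/\delta)$ misses a key cancellation: in the reduced (gradient) equations the constant $\ln\ln(\mu^{-N/(q+1)})$ multiplies $\int_{\R^N}U_{1,0}^q\Psi_{1,0}^0\,dy$, which vanishes identically because the dilation mode $\Psi_{1,0}^0=x\cdot\nabla U_{1,0}+\tfrac{N}{q+1}U_{1,0}$ has zero $U^q$-moment, so the dominant $\ln\ln$ piece cancels. The first surviving contribution is of order $\tfrac{\epsilon}{|\ln\mu|}\int U_{1,0}^q(\ln U_{1,0})\Psi_{1,0}^0$, and its $d$-dependent part (writing $\mu_i=\mu d_i$) is of order $\tfrac{\epsilon}{|\ln\mu|^2}|\ln d_i|$; it is this term that competes with $\mu^{(N-2)p-2}\widetilde H$. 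The paper's balance is consequently $\epsilon=\mu^{(N-2)p-2}|\ln\mu|^2$, not your $\delta^{N-2}|\ln\delta|\sim\epsilon$. Without recognising this cancellation the reduced finite-dimensional problem cannot be solved as you propose.
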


\begin{theorem}\label{kisa}
Assume that $N\geq 3$, $p\in(1,\frac{N-1}{N-2})$, $(p,q)$ satisfies (\ref{lesem}).
Then there exist two small numbers $\eta_0$  and  $\epsilon_0>0$ such that for any  $\epsilon\in(0,\epsilon_0)$ and $\eta\in(0,\eta_0)$, system (\ref{eqa}) with $\Omega=\Omega_\eta$   has $\binom lk$   solutions with
blowing-up at $k$ points   as  $\epsilon\rightarrow0$.
\end{theorem}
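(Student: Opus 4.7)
The plan is to extend the single-bubble construction of Theorem~\ref{kia} to a $k$-bubble configuration on the dumbbell domain $\Omega_\eta$, with exactly one bubble placed in each of $k$ selected chambers $\Omega_{j_1}^*,\ldots,\Omega_{j_k}^*$. Since there are $\binom{l}{k}$ ways to choose the hosting subset $\{j_1,\ldots,j_k\}\subset\{1,\ldots,l\}$, and different choices yield geometrically distinct (nonconjugate) solutions whose blow-up sets live in different combinations of chambers, the counting asserted in Theorem~\ref{kisa} will follow by producing one $k$-peak solution per such choice.

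After freezing a choice of indices $(j_1,\ldots,j_k)$, I would take as ansatz a sum $(U,V)=\sum_{i=1}^{k}(U_{\xi_i,\lambda_i},V_{\xi_i,\lambda_i})$ of the projected critical bubbles on $\R^N$ associated with the hyperbola \eqref{lesem}, centered at points $\xi_i\in\Omega_{j_i}^*$ with concentration rates $\lambda_i$ to be adjusted. The finite-dimensional Lyapunov--Schmidt reduction is then run verbatim as in the proof of Theorem~\ref{kia}: invert the linearized operator on the $L^2$-orthogonal complement of the approximate kernel generated by translations and dilations of the bubbles (using the non-degeneracy of \cite{ghp}), and solve the auxiliary equation by a contraction argument, obtaining a small remainder $(\phi_1,\phi_2)$ that depends smoothly on the parameters $(\bl,\bs)=(\lambda_1,\ldots,\lambda_k,\xi_1,\ldots,\xi_k)$. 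The existence problem is thereby reduced to finding critical points of a reduced energy $\J_\epsilon(\bl,\bs)$.

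The heart of the matter is the asymptotic expansion of $\J_\epsilon$ as $\epsilon\to 0$ and $\eta\to 0$. Because the bubble centers lie in disjoint chambers separated by necks of width at most $\eta$, Green's function estimates for $\Omega_\eta$ in the spirit of \cite{map,danc,dan} force the cross-interaction terms between distinct bubbles to vanish as $\eta\to 0$. Consequently $\J_\epsilon$ decouples at leading order into a sum $\sum_{i=1}^{k}\J_\epsilon^{(j_i)}(\lambda_i,\xi_i)$, where each $\J_\epsilon^{(j_i)}$ is the single-bubble reduced functional on the chamber $\Omega_{j_i}^*$, governed by the Robin function of $\Omega_{j_i}^*$ alone. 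The optimal concentration scale $\lambda_i=\lambda_i(\epsilon)$ will be fixed as in Theorem~\ref{kia}, so that after rescaling $\J_\epsilon^{(j_i)}$ reduces to the Robin-function term on $\Omega_{j_i}^*$.

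The main obstacle will be the variational detection of a critical point for this decoupled sum that is stable under the small perturbation caused by neck interactions. Following the strategy of Musso--Pistoia~\cite{map}, I would set up a local min-max on a product domain $\prod_{i=1}^{k} K_i$, where each $K_i\subset\Omega_{j_i}^*$ is a compact set on which the chamber Robin function attains a strict local minimum, and match $\lambda_i$ to the predicted scale by the reduction. The decoupling estimates guarantee both that the min-max level is nontrivial and that the optimizer cannot escape through the necks or across chambers, so the critical point produced is genuinely a $k$-peak solution whose blow-up sites sit in precisely the chosen chambers. Repeating over all $\binom{l}{k}$ subsets yields the claimed multiplicity; the delicate step is the uniform $\eta$-quantification of the neck remainders along the full min-max class, which is what legitimizes the ``independent bubbles'' picture.
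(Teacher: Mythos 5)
Your proposal takes the same high-level route the paper does: one bubble per selected chamber of the dumbbell, Lyapunov--Schmidt as in Theorem \ref{kia}, decoupling of the reduced problem as $\eta\to0$, and stability of the resulting critical configuration, which multiplied over the $\binom lk$ chamber subsets gives the count. Two points are worth flagging. First, because the system's energy is strongly indefinite, there is no genuine reduced functional $\J_\epsilon(\bl,\bs)$ on which to run a min-max; the paper instead works directly with the bifurcation-equation components $G_0,\dots,G_N$ of Proposition \ref{leftside}, chooses $\epsilon=\mu^{(N-2)p-2}|\ln\mu|^2$ to balance them, and passes to the $\eta\to0$ limit $(\tilde G_0,\dots,\tilde G_N)$ (via the chamber Green/Robin-function estimates of Lemmas 6.1--6.2 in \cite{kpm}, which play the role of your cited \cite{map,danc,dan} estimates); the decoupled limit map has a strict minimum $(\mathbf d_0,\boldsymbol\xi_0)$ on $\Lambda_0$, and persistence of that strict minimum gives a simultaneous zero of the $G_h$, hence a solution. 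Second, the ansatz you display, $\sum_i(U_{\xi_i,\lambda_i},V_{\xi_i,\lambda_i})$, corresponds to the paper's \emph{first} approximation $(\sum_i PU_i,\sum_i PV_i)$, whose $u$-component boundary error is too large for the contraction in Proposition \ref{pro21} to close (see the discussion leading to \eqref{oeo1}); the construction only works after replacing the $u$-component by the second approximation $\mathbf{P}U_{\mathbf d,\boldsymbol\xi}$. You do say to run the reduction ``verbatim as in Theorem \ref{kia},'' which would carry this along, but the ansatz as written would not work for the system. A minor bibliographic point: the nondegeneracy of the limit bubble used in the reduction is the one from \cite{fkp}, not \cite{ghp}.
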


Due to
the complexity of the strongly coupling non-power terms  in the sense that $u=0$ if and only if $v=0$,
we can not follow the method in \cite{kpm} to obtain
the desired results,
some new idea and technique computations are needed.

The   paper is organized as follows.
In Section 2,
  we describe the scheme of the proof of Theorems \ref{kia}-\ref{kisa}.
Section 3  provides  the reduction to the finite dimensional problem,
which is done by using the Lyapunov-Schmidt decomposition
at the approximate solutions.
Proposition \ref{leftside} is proved in Section 4.
Finally, there are some estimates in the Appendix.
Below we denote generic constants by $C$,
the values   may change from place to place and  will use big $O$ and small $o$ notations to describe the limit behavior of a certain quantity as
 $\epsilon\rightarrow 0$.

\section{Scheme of the proof}

In this section, we are devoted to find a solution  to  system  (\ref{eqa}).
Recall  that
 \[
 \frac1{p^*}:=\frac p{p+1}-\frac1N=\frac1{q+1}+\frac1N\quad\mathrm{and}\quad
 \frac1{q^*}:=\frac q{q+1}-\frac1N=\frac1{p+1}+\frac1N,
 \]
 so the exponents  $p^*$ and $q^* $ are the H\"{o}lder's conjugates of each other.
 By the Sobolev embedding theorem, it holds
 \[
 \begin{cases}
 \dot{W}^{2,\frac{p+1}{p}}(\mathbb{R}^N)
 \hookrightarrow\dot{W}^{1,p^*}(\mathbb{R}^N)\hookrightarrow L^{q+1}(\mathbb{R}^N),\\
 \dot{W}^{2,\frac{q+1}{q}}(\mathbb{R}^N)
 \hookrightarrow\dot{W}^{1,q^*}(\mathbb{R}^N)\hookrightarrow L^{p+1}(\mathbb{R}^N).
 \end{cases}
 \]
 For any smooth domain  $\Omega$ in $\mathbb{R}^N$, we set
 \[
 X_{p,q}=\Big(\dot{W}^{2,\frac{p+1}{p}}(\Omega)
 \cap\dot{W}_0^{1,p^*}(\Omega)\Big)
 \times
 \Big(\dot{W}^{2,\frac{q+1}{q}}(\Omega)
 \cap\dot{W}_0^{1,q^*}(\Omega)\Big),
 \]
 and
 \begin{equation}\label{uaow}
 X_{p,q,\epsilon}=\left\{(u,v)\in X_{p,q}:u\in L^{q+1- \epsilon}(\Omega),v\in L^{p+1- \epsilon}(\Omega)\right\},
 \end{equation}
equipped with the norm
 \begin{align}\label{eaf}
\|(u,v)\|= &  \|(u,v)\|_{X_{p,q,\epsilon}}\nonumber\\
= & \|\Delta u\|_{L^{\frac{p+1}{p}}(\Omega)}
+\|\Delta v\|_{L^{\frac{q+1}{q}}(\Omega)}
+\|u\|_{L^{p+1-\epsilon}(\Omega)}
+\|v\|_{L^{q+1-\epsilon}(\Omega)},
 \end{align}
and
the orthogonal is taken with respect to the scalar
product in
\begin{equation}\label{inne}
\langle (u,v),(w,z)\rangle
=\int_{\Omega}(\nabla u \nabla w
+ \nabla v \nabla z)dx.
%=\int_{\Omega}\left(pV_{i}^{p-1}uw
%+qU_{i}^{q-1}v z\right)dx,
\end{equation}

It will be useful to rewrite system (\ref{eqa})  in a different setting.
Let $\mathcal{I}^*:L^{\frac{p+1}p}(\Omega)\times L^{\frac{q+1}q}(\Omega) \hookrightarrow   X_{p,q,\epsilon}$
be the adjoint operator of the immersion $\mathcal{I}: X_{p,q,\epsilon}\hookrightarrow L^{p+1}(\Omega)\times L^{q+1}(\Omega) $,
that is, for  $(u,v)\in L^{\frac{p+1}p}(\Omega)\times L^{\frac{q+1}q}(\Omega) $,
$(w,z) =\mathcal{I}^*(u,v)$ if and only if
\[
\begin{cases}
-\Delta u=w&\text{in}~ \Omega,\\
-\Delta v=z&\text{in}~ \Omega,\\
u=v=0&\text{on}~ \partial\Omega,
\end{cases}
\quad\Longleftrightarrow\quad
\begin{cases}u(x)=\displaystyle\int_{\Omega}G(x,y)w(y)dy,\\
v(x)=\displaystyle\int_{\Omega}G(x,y)z(y)dy,
\end{cases}
\quad\mbox{for}\ x\in\Omega.
\]
Then,  by the Calder\'{o}n-Zygmund estimate, it holds
\begin{equation}\label{embed}
\|\mathcal{I}^*(u,v)\|\leq C|(u,v)|_{L^{\frac{p+1}p}(\Omega)\times L^{\frac{q+1}q}(\Omega)}
\quad\mbox{for}\ (u,v)\in L^{\frac{p+1}p}(\Omega)\times L^{\frac{q+1}q}(\Omega),
\end{equation}
for some constant $C>0$.
Moreover, we define
\begin{equation}\label{esmbed}
|(u,v)|_{L^{\frac{p+1}p}(\Omega)\times L^{\frac{q+1}q}(\Omega)}
=|u|_{L^{\frac{p+1}p}(\Omega)}
+
|v|_{L^{\frac{q+1}q}(\Omega)}.
\end{equation}
Using these definitions and notations,
system (\ref{eqa})   turns out to be equivalent to
  finding  solutions to the following equation
\[
(w,z) =\mathcal{I}^*[f_\epsilon(u),g_\epsilon(v)],
\]
where
\begin{equation}\label{fg}
f_\epsilon(u)=\frac{|u|^{p-1}u}{[\ln(e+|u|)]^\epsilon}
\quad \mbox{and}\ \
g_\epsilon(v)
=\frac{|v|^{q-1}v}{[\ln(e+|v|)]^\epsilon}.
\end{equation}

We analyze the  Green's function denoting by $G(x,y)$ for Laplace operator in $\Omega$ with zero Dirichlet
boundary condition,
 and to construct an approximate  function for  problem (\ref{eqa}),
let $H(x,y)$ be the  regular part of $G(x,y)$.
Then
\[
G(x,y)=\frac{\gamma_N}{|x-y|^{N-2}}-H(x,y),
\]
for $(x,y)\in\Omega\times\Omega $ such that $x\neq y$,
where $\gamma_{N}=\frac{1}{(N-2)|\mathbb{S}^{N-1}|}>0$ and
$|\mathbb{S}^{N-1}|=2\pi^{N/2}/\Gamma(\frac{N}{2})$   is the unit sphere in $\mathbb{R}^N$ centered at the origin.
 In addition, we introduce a function $\tilde{G}:\Omega\times\Omega\rightarrow\mathbb{R}$ satisfying
 \[
\begin{cases}
-\Delta\widetilde{G}(x,y)=G^p(x,y)\quad &\text{for}~x\in\Omega,\\
\widetilde{G}(x,y)=0 \quad &\text{for}~x\in\partial\Omega,
\end{cases}
\]
and its regular part $\tilde{H}:\Omega\times\Omega\to\mathbb{R}$ by
\[
\tilde{H}(x,y)
=
\frac{\tilde{\gamma}_{N,p}}{|x-y|^{(N-2)p-2}}-\tilde{G}(x,y),
\]
where $\tilde{\gamma}_{N,p}=\frac{\gamma_{N}^{p}}{[(N-2)p-2][N-(N-2)p]}$.

\subsection{The limit system}

By the results of   Corollary I.2 in \cite{line},
if ($p,q$) satisfies critical condition (\ref{lesem}),
the  bubble   defined by
 $(U,V)
 \in\dot{W}^{2,\frac{p+1}{p}}(\mathbb{R}^{N}\times\dot{W}^{2,\frac{q+1}{q}}(\mathbb{R}^{N})$ for $N\geq 3$ solving  the system
 \begin{eqnarray}\label{uni}
\left\{ \arraycolsep=1.5pt
   \begin{array}{lll}
-\Delta U=V^p \quad &\text{in }\mathbb{R}^N,  \\[2mm]
-\Delta V=U^q  \quad &\text{in }\mathbb{R}^N.
\end{array}
\right.
\end{eqnarray}
By the well-known result of  Alvino et. al \cite{aml},  $(U,V)$ is radially symmetric and decreasing in the radial variable after a suitable translation,
and problem (\ref{uni}) has an unique  positive ground state $(U_{1,0}(x),V_{1,0}(x))$
 such that
\begin{equation}\label{max}
 U_{1,0}(0)=\max_{x\in \mathbb{R}}U_{1,0}(x)=1.
\end{equation}
Moreover,  for any $\xi\in\mathbb{R}^{N}$ and $\mu=\mu(\epsilon)\rightarrow0$  as $\epsilon\rightarrow0$, the family of functions
\begin{align}\label{bubble}
 (U_{\mu,\xi}(x),V_{\mu,\xi}(x))
 =  \Big(\mu^{-\frac{N}{q+1}}U_{1,0}(\frac{x-\xi}{\mu}),
\mu^{-\frac{N}{p+1}}V_{1,0} (\frac{x-\xi}{\mu} )\Big),
\end{align}
are  all the positive  solutions of (\ref{uni}), see \cite{hums}.
Since $U_{ \mu,\xi}$ and $V_{ \mu,\xi}$  do not have an explicit expression and we
only have access to their decay at infinity,
which such that the estimates are not at all straightforward.

The sharp asymptotic behavior of $(U_{\mu,\xi}(x),V_{\mu,\xi}(x))$ is  shown in the following.
\begin{lemma}\cite[Theorem 2]{hums}\label{ls}
There exist positive constants $a_{N,p}$ and $b_{N,p}$ depending
only on $N$ and $p$ such that
\begin{equation*}
\begin{cases}
\lim\limits_{r\to\infty}r^{(N-2)p-2}U_{1,0}(r)=a_{N,p},\\
\lim\limits_{r\to\infty}r^{N-2}V_{1,0}(r)=b_{N,p},
\end{cases}
\end{equation*}
where $ U_{1,0}(x)=U_{1,0}(|x|)$, $V_{1,0}(x)=V_{1,0}(|x|)$ and $r=|x|$.
Furthermore,
\begin{equation}\label{lss}
b_{N,p}^p=a_{N,p}[(N-2)p-2][N-(N-2)p].
\end{equation}
\end{lemma}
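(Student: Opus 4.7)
The plan is to combine the radiality of $(U_{1,0},V_{1,0})$ (available through \cite{aml}) with the Newton--potential representations
\[
 U_{1,0}(x)=\gamma_N\int_{\mathbb{R}^N}\frac{V_{1,0}^p(y)}{|x-y|^{N-2}}\,\mathrm{d}y,\qquad V_{1,0}(x)=\gamma_N\int_{\mathbb{R}^N}\frac{U_{1,0}^q(y)}{|x-y|^{N-2}}\,\mathrm{d}y,
\]
and to extract the two asymptotic constants by separate arguments. A preliminary step, classical in the Lane--Emden literature, is to establish the rough pointwise decay bounds $U_{1,0}(x)\le C|x|^{-[(N-2)p-2]}$ and $V_{1,0}(x)\le C|x|^{-(N-2)}$ at infinity, via $L^\infty$ boundedness (Moser iteration on the system) followed by a decay bootstrap through the two integral identities. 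A direct computation using (\ref{lesem}) gives $q[(N-2)p-2]>N$, hence $U_{1,0}^q\in L^1(\mathbb{R}^N)$.

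With $U_{1,0}^q\in L^1$, the asymptotic of $V_{1,0}$ is the easy half. Multiplying the second identity by $|x|^{N-2}$ makes the integrand $(|x|/|x-y|)^{N-2}U_{1,0}^q(y)$, which is dominated by a fixed $L^1$ function and tends pointwise to $U_{1,0}^q(y)$ as $|x|\to\infty$. Dominated convergence then produces the limit $b_{N,p}:=\gamma_N\int_{\mathbb{R}^N}U_{1,0}^q$.

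The genuinely harder case is $U_{1,0}$, because $V_{1,0}^p(y)\sim b_{N,p}^p|y|^{-(N-2)p}$ is \emph{not} integrable on $\mathbb{R}^N$ (indeed $(N-2)p<N$). The remedy is a scaling trick: substitute $y=|x|z$ and absorb $|x|^{(N-2)p}$ inside the bracket to get
\[
 |x|^{(N-2)p-2}U_{1,0}(x)=\gamma_N\int_{\mathbb{R}^N}\frac{\bigl[|x|^{N-2}V_{1,0}(|x|z)\bigr]^p}{|e_x-z|^{N-2}}\,\mathrm{d}z,\qquad e_x:=\frac{x}{|x|}.
\]
By the previous step the bracket converges pointwise to $b_{N,p}^p|z|^{-(N-2)p}$, while the rough decay on $V_{1,0}$ dominates it by $C|z|^{-(N-2)p}$. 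Integrability of the limit kernel $|e_x-z|^{-(N-2)}|z|^{-(N-2)p}$ on $\mathbb{R}^N$ is a routine check near $0$, near $e_x$, and at infinity. Dominated convergence then yields $a_{N,p}=\gamma_N b_{N,p}^p\int_{\mathbb{R}^N}|e-z|^{-(N-2)}|z|^{-(N-2)p}\,\mathrm{d}z$.

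Finally, identity (\ref{lss}) need not be read off from the (unpleasant) Riesz composition formula for this integral. A cleaner derivation tests the ansatz against the PDE: if $U_{1,0}(r)\sim a_{N,p}r^{-\beta}$ at infinity with $\beta=(N-2)p-2$, then the radial Laplacian gives
\[
 -\Delta\bigl(a_{N,p}r^{-\beta}\bigr)=a_{N,p}\beta(N-2-\beta)\,r^{-\beta-2}=a_{N,p}[(N-2)p-2][N-(N-2)p]\,r^{-(N-2)p},
\]
which at leading order must equal $V_{1,0}^p(r)\sim b_{N,p}^p r^{-(N-2)p}=-\Delta U_{1,0}(r)$, yielding $b_{N,p}^p=a_{N,p}[(N-2)p-2][N-(N-2)p]$. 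The main obstacle in the whole program is the \emph{preliminary} rough decay estimate on $U_{1,0}$: without it neither envelope in the two dominated-convergence arguments is available, and producing it forces a full exploitation of the critical relation (\ref{lesem}) through an iterative improvement of decay starting from the finite-energy membership $(U_{1,0},V_{1,0})\in\dot{W}^{2,(p+1)/p}(\mathbb{R}^N)\times\dot{W}^{2,(q+1)/q}(\mathbb{R}^N)$.
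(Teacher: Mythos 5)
This statement is quoted by the paper from \cite[Theorem 2]{hums} without any proof, so you are reconstructing the literature argument rather than a proof contained in this paper. Your outline essentially reproduces the Hulshof--Van der Vorst strategy, and the two halves (the easy dominated-convergence step for $V_{1,0}$, a scaling argument for $U_{1,0}$) are the right structure. Two points should be tightened.

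The dominated convergence for $V_{1,0}$ is not quite a one-shot application: the ratio $\bigl(|x|/|x-y|\bigr)^{N-2}$ is unbounded in $y$ for fixed $x$, so one has to split at $|y|=|x|/2$. On the inner region the ratio is $\le 2^{N-2}$, and you dominate by $2^{N-2}U_{1,0}^q\in L^1$; on the outer region a change of variables $y=|x|z$ gives a contribution $O\bigl(|x|^{N-q[(N-2)p-2]}\bigr)\to 0$, using $q[(N-2)p-2]>N$. Your $U_{1,0}$ step, by contrast, is clean as written, because the rough bound $V_{1,0}(r)\le C(1+r)^{-(N-2)}$ really does hold for \emph{all} $r>0$ (boundedness near the origin is absorbed into the constant), so the envelope $C|z|^{-(N-2)}$ is genuinely uniform in $x$.

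The real gap is the derivation of (\ref{lss}): knowing $U_{1,0}(r)\sim a_{N,p}r^{-\beta}$ pointwise does not entitle you to conclude $-\Delta U_{1,0}(r)\sim -\Delta\bigl(a_{N,p}r^{-\beta}\bigr)$, since differentiating an asymptotic twice requires control of the remainder's derivatives. ``Testing the ansatz against the PDE'' is therefore a formal matching, not a proof. The rigorous version of the same idea integrates the radial ODE instead of differentiating the ansatz. From $(r^{N-1}U_{1,0}')'=-r^{N-1}V_{1,0}^p$ and $U_{1,0}'(0)=0$,
\[
r^{N-1}U_{1,0}'(r)=-\int_0^r s^{N-1}V_{1,0}^p(s)\,\mathrm{d}s\sim-\frac{b_{N,p}^p}{N-(N-2)p}\,r^{N-(N-2)p},
\]
since $s^{N-1}V_{1,0}^p(s)\sim b_{N,p}^p s^{N-1-(N-2)p}$ with exponent $>-1$ (because $(N-2)p<N$). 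A second integration from $r$ to $\infty$, using $U_{1,0}(\infty)=0$ and $(N-2)p>2$, gives
\[
U_{1,0}(r)\sim\frac{b_{N,p}^p}{[(N-2)p-2][N-(N-2)p]}\,r^{-[(N-2)p-2]},
\]
which yields simultaneously the asymptotic of $U_{1,0}$ and identity (\ref{lss}), and in fact dispenses with the $y=|x|z$ scaling trick altogether.
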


\begin{lemma} \cite[Corollary 2.6-2.7]{kpm}\label{ls}
 Given any $\sigma\in(0,1)$, for $|x|\geq 1$, it holds that
\[
\left|V_{1,0}(x)-\frac{b_{N,p}}{|x|^{N-2}}\right|=O\left(\frac1{|x|^{N-1}}\right),
\]
and
\[
\left|\nabla V_{1,0}(x)+(N-2)b_{N,p}\frac{x}{|x|^N}\right|
=O\left(\frac{1}{|x|^{N-\sigma}}\right).
\]
 Assume further that $p\in(1,\frac{N-1}{N-2})$, it holds that
 \[
\left|U_{1,0}(x)-\frac{a_{N,p}}{|x|^{(N-2)p-2}}\right|=O\left(\frac{1}{|x|^{(N-2)p-1}}\right),
\]
and
\[
\left|\nabla U_{1,0}(x)+((N-2)p-2)a_{N,p}\frac{x}{|x|^{(N-2)p}}\right|=O\left(\frac{1}{|x|^{\kappa_{0}}}\right),
\]
where $\kappa_{0}=\operatorname*{min}\{N-2,(N-1)p-2\}>(N-2)p-1$.
\end{lemma}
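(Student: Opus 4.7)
Since the statement is quoted verbatim from \cite{kpm}, the natural plan is to reconstruct the Green-function representation argument. The starting point is to integrate the system (\ref{uni}) against the Newtonian kernel, giving
\[
V_{1,0}(x)=\gamma_N\int_{\R^N}\frac{U_{1,0}^q(y)}{|x-y|^{N-2}}\,\di y,\qquad U_{1,0}(x)=\gamma_N\int_{\R^N}\frac{V_{1,0}^p(y)}{|x-y|^{N-2}}\,\di y.
\]
These identities are justified by the leading-order asymptotics recorded in the preceding lemma, together with the observation that $U_{1,0}^q\in L^1(\R^N)$ (in fact $U_{1,0}$ is bounded and lies in $L^{q+1}$), whereas $V_{1,0}^p$ may fail to be integrable at infinity because $p<N/(N-2)$. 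This dichotomy is exactly what separates the two halves of the statement.

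For the $V_{1,0}$ estimate I would split $\R^N=\{|y|\le|x|/2\}\cup\{|y|>|x|/2\}$. On the inner piece a Taylor expansion gives $|x-y|^{-(N-2)}=|x|^{-(N-2)}(1+O(|y|/|x|))$, producing the leading term $\gamma_N|x|^{-(N-2)}\int U_{1,0}^q=b_{N,p}|x|^{-(N-2)}$ (which determines $b_{N,p}$) plus a remainder of order $|x|^{-(N-1)}\int|y|\,U_{1,0}^q(y)\,\di y$; finiteness of that first moment follows from the $r^{-q((N-2)p-2)}$ decay of $U_{1,0}^q$. The outer piece is absorbed into $O(|x|^{-(N-1)})$ by brute force. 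Differentiating the representation in $x$ and repeating the same split yields the gradient bound; the harmless $\sigma\in(0,1)$ enters to absorb a logarithmic factor that arises when one estimates $\nabla_x|x-y|^{-(N-2)}$ uniformly on the transition region $|y|\sim|x|$.

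For the $U_{1,0}$ estimate the point is that, since $V_{1,0}^p(y)\sim b_{N,p}^p|y|^{-(N-2)p}$ at infinity with $(N-2)p<N$, the leading asymptotics of the convolution are governed by the Riesz composition identity
\[
\int_{\R^N}\frac{|y|^{-(N-2)p}}{|x-y|^{N-2}}\,\di y=\frac{c_{N,p}}{|x|^{(N-2)p-2}}
\]
for an explicit constant $c_{N,p}$. Writing $V_{1,0}^p(y)=b_{N,p}^p|y|^{-(N-2)p}+R(y)$ with $|R(y)|\lesssim|y|^{-(N-2)p-1}$ for $|y|\ge1$ (which follows from the $V_{1,0}$ expansion just obtained, combined with the mean value theorem applied to $t\mapsto t^p$), substituting and collecting terms produces the leading term $a_{N,p}|x|^{-((N-2)p-2)}$ together with the improved remainder $O(|x|^{-((N-2)p-1)})$; the relation (\ref{lss}) then drops out of matching $a_{N,p}=\gamma_N b_{N,p}^p c_{N,p}$ against the explicit value of $c_{N,p}$. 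The hypothesis $p<(N-1)/(N-2)$ is exactly what guarantees that the gained power does not exceed $N-2$, so that no larger error is hidden in the contribution of the bounded-$y$ region.

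The main obstacle, and the origin of the exponent $\kappa_0=\min\{N-2,(N-1)p-2\}$, appears only in the gradient estimate for $U_{1,0}$. Differentiating the representation replaces $|x-y|^{-(N-2)}$ by a kernel of order $|x-y|^{-(N-1)}$, and one must balance two competing sources of error: the correction from Taylor-expanding the kernel, which gains one power of decay and produces the exponent $(N-1)p-2$, versus the contribution of the intermediate annulus $|y|\sim|x|$, which can only be controlled up to $|x|^{-(N-2)}$. Taking the minimum reproduces $\kappa_0$, and verifying the inequality $\kappa_0>(N-2)p-1$ asserted in the statement—so that the gradient correction is strictly smaller than the leading gradient term $((N-2)p-2)a_{N,p}x|x|^{-(N-2)p}$—is the step I would expect to spend the most care on.
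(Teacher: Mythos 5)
The paper supplies no proof of this lemma: it is quoted verbatim as a citation to \cite[Cor.\ 2.6--2.7]{kpm}, so there is no in-paper argument against which to check your reconstruction. Your overall strategy---the Newtonian representations $U_{1,0}=\gamma_N\,|\cdot|^{2-N}*V_{1,0}^p$ and $V_{1,0}=\gamma_N\,|\cdot|^{2-N}*U_{1,0}^q$, domain-splitting around $|y|\sim|x|$, and the Riesz composition formula for the leading term of $U_{1,0}$---is indeed the standard route and matches what \cite{kpm} does. The dichotomy you point to between the two halves is real, but the phrasing is slightly off: it is not that $V_{1,0}^p\notin L^1$ that creates the difficulty (the integral $\int |x-y|^{2-N}V_{1,0}^p\,dy$ still converges, since $(N-2)(p+1)>N$ when $p>2/(N-2)$); the point is rather that $V_{1,0}^p$ fails to decay fast enough for a Taylor-expansion around $y=0$ to capture the leading order, which is why one must replace it with the Riesz-convolution identity. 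You should also note that this identity, and indeed the definition of $\widetilde\gamma_{N,p}$ in the paper, implicitly require $2/(N-2)<p<N/(N-2)$, which does hold automatically for $N\geq 4$ together with $1<p<\frac{N-1}{N-2}$, but fails for $N=3$; this is a latent issue with the stated hypotheses of the paper rather than your argument.

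Two of your heuristic remarks do not survive scrutiny. First, the $\sigma$-loss in the $\nabla V_{1,0}$ estimate is not produced by a logarithm on the transition region: since $U_{1,0}^q(y)\lesssim|y|^{-q[(N-2)p-2]}$ with $q[(N-2)p-2]=N+2(p+1)>N+1$ on the critical hyperbola, the first moment $\int|y|\,U_{1,0}^q\,dy$ converges and the direct splitting argument actually gives the cleaner bound $O(|x|^{-N})$; the $\sigma$ in the statement is simply slack that \cite{kpm} chose to carry. Second, the claim that $\kappa_0$ arises from balancing a $(N-1)p-2$ term against an $N-2$ term from the intermediate annulus does not match what the representation actually produces: carrying out your own scheme carefully, every region (the bounded-$y$ core, the bulk $1<|y|<|x|/2$, the annulus $|y|\sim|x|$, and the far field) contributes $O(|x|^{-(N-2)p})$ to $\nabla U_{1,0}+((N-2)p-2)a_{N,p}x|x|^{-(N-2)p}$, and $(N-2)p>\kappa_0$ when $p<2$, which covers the entire hypothesis range. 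So $\kappa_0$ is again a weaker exponent than the argument delivers, not a sharp threshold you should expect to hit. Finally, the verification that $\kappa_0>(N-2)p-1$, which you flag as the delicate step, is in fact immediate: $(N-1)p-2>(N-2)p-1$ iff $p>1$, and $N-2>(N-2)p-1$ iff $p<\frac{N-1}{N-2}$, both hypotheses of the lemma. The genuinely delicate point you gloss over is that $R(y)=V_{1,0}^p(y)-b_{N,p}^p|y|^{-(N-2)p}$ is \emph{not} integrable at infinity precisely when $p<\frac{N-1}{N-2}$ (its tail is $|y|^{-((N-2)p+1)}$ with $(N-2)p+1<N$), and has an $|y|^{-(N-2)p}$ singularity at the origin, so the ``substituting and collecting terms'' step must be done region-by-region rather than by moving the kernel's Taylor expansion under a convergent integral.
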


\subsection{The first approximation of the solution}

Hereafter, for the sake of simplicity,
we will write $ (U_i,V_i)$ in place of $(U_{\mu_i,\xi_i},V_{\mu_i,\xi_i})$ for $i=1,\cdots,k$.
Let $ (PU_i,PV_i)$ to be defined as the unique solution to the problem
\begin{equation}\label{oeo}
\begin{cases}
-\Delta PU_i=V_i^p\quad &\text{in}\ \Omega,\\
-\Delta PV_i=U_i^q\quad &\text{in} \ \Omega,\\
PU_i=PV_i=0 \quad &\text{on}\ \partial\Omega,
\end{cases}
\end{equation}
for $i=1,\cdots,k$.
A standard comparison argument based on (\ref{ls}) yields
\begin{lemma}\label{uaoe}
Let $\widehat H:\Omega\times\Omega\to\mathbb{R}$ be a smooth function such that
\[
\begin{cases}
-\Delta\widehat{H}(x,y)=0\quad &\text{in}\ \Omega,\\
\widehat{H}(x,y)=\dfrac{1}{|x-y|^{(N-2)p-2}}\quad &\text{on} \ \partial\Omega,
\end{cases}
\]
 then
 \[
 PU_i(x)=U_i(x)-a_{N,p}\mu_i^{\frac{N_p}{q+1}}\widehat{H}(x,\xi_i)
 +o(\mu^{\frac{N_p}{q+1}}),
 \]
 and
 \[
 PV_i(x)=V_i(x)-\left(\frac{b_{N,p}}{\gamma_N}\right)
 \mu_i^{\frac{N}{q+1}}H(x,\xi_i)
 +o(\mu^{\frac{N}{q+1}}),
 \]
for $i=1,\cdots,k$.
\end{lemma}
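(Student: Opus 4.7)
The plan is to reduce the entire statement to a boundary expansion followed by a maximum-principle comparison. Introduce the correctors $\phi_i:=U_i-PU_i$ and $\psi_i:=V_i-PV_i$. Since \eqref{uni} is set in all of $\R^N$ while \eqref{oeo} is set only in $\Omega$, the differences $\phi_i$ and $\psi_i$ are harmonic in $\Omega$, with boundary traces $U_i|_{\partial\Omega}$ and $V_i|_{\partial\Omega}$ respectively. Because $\widehat H(\cdot,\xi_i)$ and $H(\cdot,\xi_i)$ are themselves harmonic in $\Omega$, it will suffice to match these boundary traces with those of the proposed leading terms up to the right order in $\mu_i$.

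For the boundary expansion, I may assume (as is standard in this construction) that the concentration points $\xi_i$ stay in a fixed compact subset of $\Omega$, so that $|x-\xi_i|\geq\delta>0$ uniformly for $x\in\partial\Omega$ and $|(x-\xi_i)/\mu_i|\to\infty$ uniformly as $\mu_i\to 0$. Feeding the rescaling \eqref{bubble} into the sharp asymptotics of Lemma~\ref{ls} gives, uniformly on $\partial\Omega$,
\[
U_i(x)=\frac{a_{N,p}\,\mu_i^{(N-2)p-2-\frac{N}{q+1}}}{|x-\xi_i|^{(N-2)p-2}}+O\bigl(\mu_i^{(N-2)p-1-\frac{N}{q+1}}\bigr),
\]
\[
V_i(x)=\frac{b_{N,p}\,\mu_i^{N-2-\frac{N}{p+1}}}{|x-\xi_i|^{N-2}}+O\bigl(\mu_i^{N-1-\frac{N}{p+1}}\bigr).
\]
The critical hyperbola \eqref{lesem} yields the identities $(N-2)p-2-\tfrac{N}{q+1}=\tfrac{Np}{q+1}$ and $N-2-\tfrac{N}{p+1}=\tfrac{N}{q+1}$, while the remainders pick up an extra factor of $\mu_i$, so they are $o(\mu_i^{Np/(q+1)})$ and $o(\mu_i^{N/(q+1)})$ respectively. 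The defining boundary condition for $\widehat H$ gives $|x-\xi_i|^{-((N-2)p-2)}=\widehat H(x,\xi_i)$ on $\partial\Omega$, and the decomposition $G(x,y)=\gamma_N|x-y|^{-(N-2)}-H(x,y)$ combined with $G(x,\xi_i)=0$ on $\partial\Omega$ gives $|x-\xi_i|^{-(N-2)}=\gamma_N^{-1}H(x,\xi_i)$ on $\partial\Omega$.

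With these identifications, the functions
\[
\phi_i-a_{N,p}\,\mu_i^{\frac{Np}{q+1}}\widehat H(\cdot,\xi_i)\qquad\text{and}\qquad \psi_i-\frac{b_{N,p}}{\gamma_N}\,\mu_i^{\frac{N}{q+1}}H(\cdot,\xi_i)
\]
are harmonic in $\Omega$ and, by the preceding paragraph, their $L^\infty(\partial\Omega)$ norms are $o(\mu_i^{Np/(q+1)})$ and $o(\mu_i^{N/(q+1)})$. The maximum principle propagates these bounds to the interior, yielding the claimed expansions. The only delicate point is the exponent bookkeeping in the middle paragraph: one must apply \eqref{lesem} to correctly identify $\tfrac{Np}{q+1}$ and $\tfrac{N}{q+1}$ as the leading powers, and check that the remainders produced by Lemma~\ref{ls} are genuinely one order of $\mu_i$ smaller than the leading terms. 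The hypothesis $p\in(1,\tfrac{N-1}{N-2})$ enters implicitly through the validity of the sharp asymptotics of $U_{1,0}$ in Lemma~\ref{ls}.
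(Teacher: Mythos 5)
Your proof is correct and supplies exactly the ``standard comparison argument based on Lemma~\ref{ls}'' that the paper invokes without writing out: decomposing $PU_i=U_i-\phi_i$, $PV_i=V_i-\psi_i$ with $\phi_i,\psi_i$ harmonic, expanding $U_i,V_i$ on $\partial\Omega$ via the far-field asymptotics and the critical-hyperbola identities $(N-2)p-2-\tfrac{N}{q+1}=\tfrac{Np}{q+1}$ and $N-2-\tfrac{N}{p+1}=\tfrac{N}{q+1}$, matching boundary data with $\widehat H$ and $\gamma_N^{-1}H$, and finishing by the maximum principle. The exponent bookkeeping is right, and your remainders are in fact $O(\mu_i^{\,\cdot+1})$, one full power better than the stated $o(\cdot)$; so the proposal matches the intended argument.
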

 
Our construction takes advantage of the nondegeneracy of
the solutions ($U_{ \mu,\xi},  V_{ \mu,\xi}$ ) in (\ref{bubble}),
whose explicit form
are described   in \cite{fkp},
i.e., the space of solutions to problem
\begin{equation}\label{linear-equ}
\begin{cases}
-\Delta\Psi=pV_{1,0}^{p-1}\Phi \quad & \mbox{in} \ \mathbb{R}^N,\\
-\Delta\Phi=qU_{1,0}^{q-1}\Psi \quad & \mbox{in} \ \mathbb{R}^N,\\
(\Psi,\Phi)\in\dot{W}^{2,\frac{p+1}{p}}(\mathbb{R}^N)
\times\dot{W}^{2,\frac{q+1}{q}}(\mathbb{R}^N),
\end{cases}
\end{equation}
is spanned by
\begin{align}\label{pssi}
(\Psi_{1,0}^0,\Phi_{1,0}^0)
= & \left(x\cdot\nabla U_{1,0}+\frac{NU_{1,0}}{q+1},
x\cdot\nabla V_{1,0}+\frac{NV_{1,0}}{p+1}\right),\nonumber\\
 (\Psi_{1,0}^l,\Phi_{1,0}^l)
= &(\partial_l U_{1,0},\partial_l V_{1,0})\quad  \mbox{for} \ l=1,\cdots,N.
\end{align}
Moreover, we set
\begin{align}\label{psi}
(\Psi_{\mu,\xi}^{0},\Phi_{\mu,\xi}^{0})
= & \Big(\mu^{-\frac{N}{q+1}}\Psi_{1,0}^{0}(\frac{x-\xi}{\mu}),
\mu^{-\frac{N}{p+1}}
\Phi_{1,0}^{0}(\frac{x-\xi}{\mu})\Big),\nonumber\\
(\Psi_{\mu,\xi}^l,\Phi_{\mu,\xi}^l)
= & \Big(\mu^{-\frac{N}{q+1}}\Psi_{1,0}^l(\frac{x-\xi}{\mu}),
\mu^{-\frac{N}{p+1}}
\Phi_{1,0}^l(\frac{x-\xi}{\mu})\Big)
\quad \ \mbox{for} \ l=1,\cdots,N.
\end{align}
Then
\begin{equation}\label{xy}
(\Psi_{\mu,\xi}^{0},\Phi_{\mu,\xi}^{0}) =\mu(\partial_{\mu}U_{\mu,\xi},\partial_{\mu}V_{\mu,\xi}),
\quad
(\Psi_{\mu,\xi}^l,\Phi_{\mu,\xi}^l) =\mu(\partial_{\xi_{l}}U_{\mu,\xi},
\partial_{\xi_{l}}V_{\mu,\xi})
\quad \ \mbox{for} \ l=1,\cdots,N.
\end{equation}
Let $(\Psi_{il},\Phi_{il})
=(\Psi_{\mu_{i},\xi_{i}}^l,\Phi_{\mu_{i},\xi_{i}}^l)$
and    the pair   $(P\Psi_{il},P\Phi_{il}) $ be the unique smooth solution of the system
\begin{equation}\label{ssia}
\begin{cases}
-\Delta P\Psi_{il}=pV_i^{p-1}\Phi_{il}\ \ &\text{in}~\Omega,\\
-\Delta P\Phi_{il}=qU_i^{q-1}\Psi_{il}\ \ &\text{in}~\Omega,\\
P\Psi_{il}=P\Phi_{il}=0\ \ &\text{on}~\partial\Omega,
\end{cases}
\end{equation}
for $i=1,\cdots,k$ and $l=0,\cdots,N$. Then, the following result holds.

\begin{lemma}\cite[Lemma 2.10]{kpm}
If $i=1,\cdots,k$ and $l=0,\cdots,N$, for $x\in\Omega$, it holds that
\[
P\Psi_{il}(x)=
\begin{cases}
\Psi_{il}(x)+a_{N,p}\mu_i^{\frac{N_p}{q+1}}\widehat{H}(x,\xi_i)
+o(\mu^{\frac{N_p}{q+1}})\ \ &  for\ l=0,\\
\Psi_{il}(x)+a_{N,p}\mu_i^{\frac{N_p}{q+1}+1}
\partial_{\xi,l}\widehat{H}(x,\xi_i)
+o(\mu^{\frac{N_p}{q+1}+1})\ \ & for\ l=1,\cdots,N,
\end{cases}
\]
and
 \[
 P\Phi_{il}(x)=
 \begin{cases}
 \Phi_{il}(x)+\left(\frac{b_{N,p}}{\gamma_N}\right)\mu_i^{\frac{N}{q+1}}
 H(x,\xi_i)+o(\mu^{\frac{N}{q+1}})\ \ &for\ l=0,\\
 \Phi_{il}(x)+\left(\frac{b_{N,p}}{\gamma_N}\right)\mu_i^{\frac{N}{q+1}+1}
 \partial_{\xi,l}H(x,\xi_i)+o(\mu^{\frac{N}{q+1}+1})\ \ &for\ l=1,\cdots,N.
 \end{cases}
 \]
Moreover,
\begin{eqnarray}\label{gisewr1}
 |P\Psi_{il}-\Psi_{il}|_{L^{q+1}(\Omega)}=
\left\{ \arraycolsep=1.5pt
   \begin{array}{lll}
 O \Big(\mu_i^{\frac{N_p}{q+1}}\Big)\ \   & for \ l=0, \\[2mm]
 O \Big(\mu_i^{\frac{N_p}{q+1}+1}\Big) \ \  &  for \ l=1,\cdots,N,
\end{array}
\right.
\end{eqnarray}
and
\begin{eqnarray}\label{gisewr2}
 |P\Phi_{il}-\Phi_{il}|_{L^{p+1}(\Omega)}=
\left\{ \arraycolsep=1.5pt
   \begin{array}{lll}
 O \Big(\mu_i^{\frac{N_p}{q+1}}\Big)\ \   &for \ l=0, \\[2mm]
 O \Big(\mu_i^{\frac{N_p}{q+1}+1}\Big) \ \  &for \ l=1,\cdots,N,
\end{array}
\right.
\end{eqnarray}
 for  $i=1,\cdots,k$,
where  $\partial_{\xi,l}\widehat{H}(x,\xi)$ and $\partial_{\xi,l}H(x,\xi)$ stand for the $l$-th components of $\nabla_\xi \widehat{H}(x,\xi)$
and $\nabla_\xi H(x,\xi)$, respectively.
\end{lemma}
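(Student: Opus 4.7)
The plan is to analyze the error functions $\tilde{\psi}_{il} := \Psi_{il} - P\Psi_{il}$ and $\tilde{\phi}_{il} := \Phi_{il} - P\Phi_{il}$. Subtracting (\ref{ssia}) from the rescaled version of (\ref{linear-equ}) on $\Omega$ kills the right-hand sides, so both $\tilde{\psi}_{il}$ and $\tilde{\phi}_{il}$ are harmonic in $\Omega$ with Dirichlet boundary data $\Psi_{il}|_{\partial\Omega}$ and $\Phi_{il}|_{\partial\Omega}$, respectively. The task therefore reduces to identifying the harmonic extension of these explicitly known boundary traces.

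Next, I would pin down the asymptotic form of $\Psi_{il}|_{\partial\Omega}$ and $\Phi_{il}|_{\partial\Omega}$ via Lemma \ref{ls}. Because $\xi_i$ is interior and $\mu_i\to 0$, the argument $(x-\xi_i)/\mu_i$ is of order $\mu_i^{-1}$ for $x\in\partial\Omega$, so the far-field decay rates are in force. For $l=0$, combining the scaling (\ref{psi}) with $\Psi_{1,0}^0=y\cdot\nabla U_{1,0}+\frac{N}{q+1}U_{1,0}$ and the asymptotics of $U_{1,0}$ and $\nabla U_{1,0}$ gives $\Psi_{i0}(x)=a_{N,p}\,\mu_i^{N_p/(q+1)}|x-\xi_i|^{-((N-2)p-2)}+o\bigl(\mu^{N_p/(q+1)}\bigr)$ on $\partial\Omega$ (the coefficients collapse after using the hyperbola identity $\frac{N}{q+1}=\frac{(N-2)p-2}{p+1}$). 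Similarly $\Phi_{i0}(x)=(b_{N,p}/\gamma_N)\,\mu_i^{N/(q+1)}|x-\xi_i|^{-(N-2)}+o\bigl(\mu^{N/(q+1)}\bigr)$. For $l=1,\dots,N$, using $(\Psi_{il},\Phi_{il})=\mu_i(\partial_{\xi_l}U_i,\partial_{\xi_l}V_i)$ from (\ref{xy}), each boundary trace picks up an extra factor $\mu_i$ together with a $\partial_{\xi,l}$ applied to the singular kernel.

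Now I would recognize the leading boundary terms as traces of $\widehat{H}(\cdot,\xi_i)$ and $H(\cdot,\xi_i)$: by the defining boundary conditions of these regular parts, the harmonic extension of $|x-\xi_i|^{-((N-2)p-2)}$ is exactly $\widehat{H}(\cdot,\xi_i)$ and that of $\gamma_N|x-\xi_i|^{-(N-2)}$ is exactly $H(\cdot,\xi_i)$. Applying the maximum principle to the remainder (which is $o(\cdot)$ uniformly on $\partial\Omega$), one obtains the pointwise expansions claimed for $l=0$; for $l\geq 1$, differentiating in $\xi$ inside the boundary value and again invoking uniqueness of harmonic extension gives $\partial_{\xi,l}\widehat{H}$ and $\partial_{\xi,l}H$ as the corresponding leading parts.

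For the $L^{q+1}$ and $L^{p+1}$ difference estimates (\ref{gisewr1})-(\ref{gisewr2}), I would split $\Omega=B(\xi_i,\mu_i^\beta)\cup\bigl(\Omega\setminus B(\xi_i,\mu_i^\beta)\bigr)$ for a well-chosen $\beta\in(0,1)$. On the far region, the pointwise expansion above plus uniform boundedness of $\widehat{H}$ and $H$ away from $\xi_i$ yield the stated orders in $\mu$. On the near region, the harmonicity of the difference together with the maximum principle gives the same sup bound, which upon integration against a volume $O(\mu_i^{N\beta})$ is negligible. The main obstacle is the careful bookkeeping of the $\mu_i$-exponents through the rescaling (\ref{bubble})-(\ref{psi}) and the hyperbola relation, and verifying that the leading coefficients in the boundary expansion exactly match those of $\widehat{H}$ and $H$; once this matching is done, the rest is a routine application of the maximum principle and $L^p$ estimates for harmonic extensions.
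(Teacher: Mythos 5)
The paper does not prove this lemma; it is imported verbatim (as Lemma 2.10) from the reference \cite{kpm}. Your approach — observing that $\Psi_{il}-P\Psi_{il}$ and $\Phi_{il}-P\Phi_{il}$ are harmonic in $\Omega$ with boundary data $\Psi_{il}|_{\partial\Omega}$, $\Phi_{il}|_{\partial\Omega}$, computing those traces from the far-field asymptotics in Lemma \ref{ls}, and matching the leading terms to the harmonic extensions $\widehat H(\cdot,\xi_i)$ and $H(\cdot,\xi_i)$ via the maximum principle — is the standard comparison argument, and it is exactly the method the paper invokes one lemma earlier for $(PU_i,PV_i)$ in Lemma \ref{uaoe}. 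So the route is right, and the $L^{q+1}/L^{p+1}$ estimates by splitting $\Omega$ near and far from $\xi_i$ are also the usual way to derive (\ref{gisewr1})--(\ref{gisewr2}).

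However, your claim that ``the coefficients collapse after using the hyperbola identity'' is not what the computation produces, and this is where you need to be careful. For $l=0$, $\Psi_{1,0}^0=y\cdot\nabla U_{1,0}+\frac{N}{q+1}U_{1,0}$, and Lemma \ref{ls} gives $y\cdot\nabla U_{1,0}(y)\sim-((N-2)p-2)a_{N,p}|y|^{-((N-2)p-2)}$; combined with $U_{1,0}(y)\sim a_{N,p}|y|^{-((N-2)p-2)}$ and the identity $(N-2)p-2=\frac{N(p+1)}{q+1}$, the leading coefficient is $\bigl(-\frac{N(p+1)}{q+1}+\frac{N}{q+1}\bigr)a_{N,p}=-\frac{Np}{q+1}a_{N,p}$, not $a_{N,p}$. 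Similarly $\Phi_{1,0}^0(y)\sim-\frac{N}{q+1}b_{N,p}|y|^{-(N-2)}$. Thus the boundary traces carry the extra multiplicative factors $-\frac{Np}{q+1}$ and $-\frac{N}{q+1}$ respectively (note also that the traces must come out negative so that adding the positive regular-part term cancels them on $\partial\Omega$, while your formula has the wrong sign). This does not change the orders of magnitude $\mu_i^{Np/(q+1)}$ and $\mu_i^{N/(q+1)}$, so the $L^p$ estimates (\ref{gisewr1})--(\ref{gisewr2}) survive unaffected, but you should either carry these constants through explicitly or reconcile them with the normalization used in \cite{kpm}, since as written the asserted ``collapse'' hides a genuine numerical factor.
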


\subsection{The second approximation of the solution}

Due to  the error of the  $u$-component  $PU_i$ in Lemma \ref{uaoe}
of the first approximation $(PU_i,PV_i)$
is too big, this fact forces us to find another better   approximate solution (see Subsection 2.3 in \cite{kpm}).
That is, let the function $\mathbf{P}U_{\mathbf{d},\boldsymbol{\xi}}$ defined as the smooth solution of
\begin{equation}\label{oeo1}
\begin{cases}
-\Delta\mathbf{P}U_{\mathbf{d},\boldsymbol\xi}
=\Big(\sum\limits_{i=1}^kPV_i\Big)^p \ \ &\text{in}\ \Omega,\\[2mm]
\mathbf{P}U_{\mathbf{d},\boldsymbol\xi}=0\ \ &\text{on}\ \partial\Omega.
\end{cases}
\end{equation}
We define the function $\tilde{G}_{\mathbf{d},\boldsymbol{\xi}}:\Omega\to\mathbb{R}$ be the solution of
\[
\begin{cases}
-\Delta\widetilde{G}_{\mathbf{d},\boldsymbol{\xi}}(x)
=\left(\sum\limits_{i=1}^kd_i^{\frac{N}{q+1}}G(x,\xi_i)\right)^p \ \
&\text{in}\ \Omega,\\
\widetilde{G}_{\mathbf{d},\boldsymbol{\xi}}=0\ \ &\text{in} \ \partial\Omega,
\end{cases}
\]
and $\tilde{H}_{\mathbf{d},\boldsymbol{\xi}}:\Omega\to\mathbb{R}$ be its regular part given as
\begin{equation}\label{jua}
\widetilde{H}_{\mathbf{d},\boldsymbol{\xi}}(x)
=\sum_{i=1}^{k}d_{i}^{\frac{Np}{q+1}}
\frac{\tilde{\gamma}_{N,p}}{|x-\xi_{i}|^{(N-2)p-2}}
-\widetilde{G}_{\mathbf{d},\boldsymbol{\xi}}(x)\quad\text{for}\ x\in\Omega.
\end{equation}
Then,  for reader's convenience, we repeat the following lemma which has been established in  \cite{kpm}.

\begin{lemma}\cite[Lemma 2.12]{kpm}\label{puo}
For any $x\in \Omega$, we have
\[
\mathbf{P}U_{\mathbf{d},\boldsymbol{\xi}}(x)
=\sum_{i=1}^kU_i(x)-\mu^{\frac{Np}{q+1}}
\Big(\frac{b_{N,p}}{\gamma_N}\Big)^p\widetilde{H}_{\mathbf{d},\boldsymbol{\xi}}(x)
+o(\mu^{\frac{Np}{q+1}}).
\]
\end{lemma}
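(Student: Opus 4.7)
The plan is to compare $\mathbf{P}U_{\mathbf{d},\boldsymbol\xi}$ with the explicit ansatz
\[
W(x):=\sum_{i=1}^{k}U_{i}(x)-\mu^{\frac{Np}{q+1}}\Big(\frac{b_{N,p}}{\gamma_{N}}\Big)^{p}\widetilde{H}_{\mathbf{d},\boldsymbol\xi}(x),
\]
and to show that the error $E:=\mathbf{P}U_{\mathbf{d},\boldsymbol\xi}-W$ satisfies $\|E\|_{L^{\infty}(\Omega)}=o(\mu^{\frac{Np}{q+1}})$. Since $-\Delta\mathbf{P}U_{\mathbf{d},\boldsymbol\xi}=(\sum_{i}PV_{i})^{p}$, the function $E$ solves a Dirichlet problem, and the conclusion will follow once both its boundary trace and its Laplacian are shown to be of order $o(\mu^{\frac{Np}{q+1}})$ in suitable norms.

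The subtracted correction $\widetilde{H}_{\mathbf{d},\boldsymbol\xi}$ is designed precisely to kill the leading boundary trace of $\sum_{i}U_{i}$. Using the sharp asymptotics of Lemma \ref{ls} and the algebraic identity $\frac{Np}{q+1}=(N-2)p-2-\frac{N}{q+1}$, which follows from the critical relation (\ref{lesem}), one has $U_{i}(x)=a_{N,p}\mu_{i}^{\frac{Np}{q+1}}|x-\xi_{i}|^{-((N-2)p-2)}+o(\mu^{\frac{Np}{q+1}})$ uniformly on $\partial\Omega$. Since $\widetilde{G}_{\mathbf{d},\boldsymbol\xi}$ vanishes on $\partial\Omega$, (\ref{jua}) reduces $\widetilde{H}_{\mathbf{d},\boldsymbol\xi}$ there to the explicit singular sum $\sum_{i}d_{i}^{\frac{Np}{q+1}}\tilde\gamma_{N,p}|x-\xi_{i}|^{-((N-2)p-2)}$. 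The identity $a_{N,p}=(b_{N,p}/\gamma_{N})^{p}\tilde\gamma_{N,p}$, which one reads off by combining (\ref{lss}) with the explicit value of $\tilde\gamma_{N,p}$, then forces these leading singular parts to cancel, so that $\|W\|_{L^{\infty}(\partial\Omega)}=o(\mu^{\frac{Np}{q+1}})$.

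For the interior, the identities $-\Delta(\sum_{i}U_{i})=\sum_{i}V_{i}^{p}$ and $-\Delta|x-\xi_{i}|^{-((N-2)p-2)}=[(N-2)p-2][N-(N-2)p]|x-\xi_{i}|^{-(N-2)p}$, combined with the defining PDE of $\widetilde{G}_{\mathbf{d},\boldsymbol\xi}$, give
\[
-\Delta E=\Big(\sum_{i}PV_{i}\Big)^{\!p}-\sum_{i}V_{i}^{p}+\mu^{\frac{Np}{q+1}}\Big(\frac{b_{N,p}}{\gamma_{N}}\Big)^{\!p}\Big[\sum_{i}d_{i}^{\frac{Np}{q+1}}\frac{\gamma_{N}^{p}}{|x-\xi_{i}|^{(N-2)p}}-\Big(\sum_{i}d_{i}^{\frac{N}{q+1}}G(x,\xi_{i})\Big)^{\!p}\Big].
\]
Using standard inequalities for $p>1$ to expand $(\sum_{i}PV_{i})^{p}$ around $(\sum_{i}V_{i})^{p}$, substituting the expansion $PV_{i}=V_{i}-(b_{N,p}/\gamma_{N})\mu_{i}^{\frac{N}{q+1}}H(\cdot,\xi_{i})+o(\mu^{\frac{N}{q+1}})$ supplied by Lemma \ref{uaoe}, and splitting the multinomial $(\sum_{i}V_{i})^{p}=\sum_{i}V_{i}^{p}+(\text{cross-terms})$, I would localize the analysis on the bubble cores $B(\xi_{i},R\mu)$, the intermediate annuli $\{R\mu\leq|x-\xi_{i}|\leq\delta\}$, and the far region $\Omega\setminus\bigcup_{i}B(\xi_{i},\delta)$. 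On the far region, matching the outer Green-function tails of the $V_{j}$'s (read from Lemma \ref{ls}) with the terms coming from $-\Delta\widetilde{H}_{\mathbf{d},\boldsymbol\xi}$ produces the required cancellation up to $o(\mu^{\frac{Np}{q+1}})$ in $L^{\frac{q+1}{q}}(\Omega)$; a Calder\'on--Zygmund bound together with the boundary trace estimate from the previous step then upgrades this to $\|E\|_{L^{\infty}(\Omega)}=o(\mu^{\frac{Np}{q+1}})$.

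The main obstacle I anticipate lies on the intermediate annuli, where neither the inner bubble profile $V_{j}$ nor its asymptotic Green-function tail $b_{N,p}\mu_{j}^{\frac{N}{q+1}}|x-\xi_{j}|^{-(N-2)}$ approximates $PV_{j}$ to the accuracy required for the final $o(\mu^{\frac{Np}{q+1}})$ bound. Matching the two expansions so that the leading singular contributions genuinely cancel requires invoking the sharp decay rates of Lemma \ref{ls} together with the restriction $p\in(1,\frac{N-1}{N-2})$, which guarantees integrability of $V_{j}^{p-1}$ against the various singular kernels arising in the expansion and prevents the cross-terms from obstructing the cancellation.
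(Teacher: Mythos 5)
The paper does not prove this lemma; it imports it from \cite[Lemma 2.12]{kpm} (``for reader's convenience, we repeat the following lemma which has been established in \cite{kpm}''), so there is nothing in this source to compare against, and I assess your argument on its own.

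Your skeleton and algebra are right. The identity $a_{N,p}=(b_{N,p}/\gamma_N)^p\tilde\gamma_{N,p}$ does follow from (\ref{lss}) together with $\tilde\gamma_{N,p}=\gamma_N^p/([(N-2)p-2][N-(N-2)p])$; the exponent identity $\frac{Np}{q+1}=(N-2)p-2-\frac{N}{q+1}$ follows from (\ref{lesem}); your boundary-trace cancellation is correct and genuinely uses $p<\frac{N-1}{N-2}$ so that the remainder $O\big(\mu_i^{-N/(q+1)}(\mu_i/|x-\xi_i|)^{(N-2)p-1}\big)$ is $O(\mu^{\frac{Np}{q+1}+1})$ uniformly on $\partial\Omega$; and your displayed formula for $-\Delta E$ is exact, as is the mechanism of far-field cancellation between $(\sum_iPV_i)^p-\sum_iV_i^p$ and $-\mu^{Np/(q+1)}(b_{N,p}/\gamma_N)^p\Delta\widetilde H_{\mathbf d,\boldsymbol\xi}$.

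The gap is in the final step. You propose to bound $-\Delta E$ in $L^{(q+1)/q}(\Omega)$ and then pass to $\|E\|_{L^\infty(\Omega)}=o(\mu^{Np/(q+1)})$ by ``a Calder\'on--Zygmund bound together with the boundary trace estimate.'' Elliptic $L^r$ regularity converts $L^r$ data with controlled boundary trace into $W^{2,r}$ control, and the embedding $W^{2,r}(\Omega)\hookrightarrow L^\infty(\Omega)$ requires $r>\frac N2$. Here $r=\frac{q+1}{q}$, and since on the critical hyperbola with $p<\frac{N+2}{N-2}$ one has $q>\frac{N+2}{N-2}$, it follows that $\frac{q+1}{q}<1+\frac{N-2}{N+2}=\frac{2N}{N+2}<\frac N2$ for all $N\geq3$. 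The embedding therefore fails throughout the admissible range of $(p,q)$, and the step does not close. The remedy is to bypass $L^r$ regularity: write $E(x)=\int_\Omega G(x,y)(-\Delta E)(y)\,dy + H_E(x)$, where $H_E$ is the harmonic function equal to $E$ on $\partial\Omega$; bound $\|H_E\|_{L^\infty(\Omega)}\leq\sup_{\partial\Omega}|E|=o(\mu^{Np/(q+1)})$ by the maximum principle; and estimate the potential integral uniformly in $x$ directly from the pointwise bound $G(x,y)\lesssim|x-y|^{2-N}$, using your core/annulus/far-field decomposition of $-\Delta E$. For instance, the contribution from a core $B(\xi_i,R\mu_i)$ is of order $\mu^{N/(p+1)}$ for $x$ near $\xi_i$ and of order $\mu^{2N/(p+1)+N/(q+1)}$ for $x$ away, and both exponents strictly exceed $\frac{Np}{q+1}$ precisely because $p<\frac{N-1}{N-2}<\frac N{N-2}$. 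Your region-splitting is a sound organizing device; it just has to feed into the Green representation rather than into a Sobolev embedding that is not available here.
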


We fix $k\geq 1$, and write $\mu_{i}=\mu d_{i}$ for a small number $\mu>0$ and $i=1,\cdots,k$.
Given  $\delta_1,\delta_2\in(0,1)$ small enough,
$\mathbf{d}=(d_1,\cdots,d_k)$,
$\boldsymbol{\xi}=(\xi_1,\cdots,\xi_k)$,
let us introduce the configuration space in which the concentration points belong to
\begin{align}\label{uwo}
\Lambda
=  \{(\mathbf{d},\boldsymbol{\xi})
\in(\delta_1,\delta_1^{-1})^k
\times \Omega^k:
 \mbox{dist}(\xi_i,\partial\Omega)\geq \delta_2,\
 \mbox{dist}(\xi_i,\xi_j)\geq\delta_2\},
\end{align}
for $j=1,\cdots,k$  and $i\neq j$.

\subsection{Reformulation of the problem}

We look for a solution to (\ref{eqa}) in a small neighborhood of the second  approximation, more precisely,
 solutions of the form as
\[
(u_\epsilon,v_\epsilon)=
\Big(\mathbf{P}U_{\mathbf{d},\boldsymbol{\xi}}
+\Psi_{\mathbf{d},\boldsymbol{\xi}},
\sum_{i=1}^kPV_i+\Phi_{\mathbf{d},\boldsymbol{\xi}}\Big),
\]
where the rest term $(\Psi_{\mathbf{d},\boldsymbol{\xi}},\Phi_{\mathbf{d},\boldsymbol{\xi}})$ is small.

We introduce the   following kernel and cokernel spaces
\begin{align*}
& E_{\mathbf{d},\boldsymbol{\xi}} =
\mbox{span} \left\{(P\Psi_{il},P\Phi_{il}): i=1,\cdots,k\ \mbox{and} \  l=0,\cdots,N\right\},\\
& E_{\mathbf{d},\boldsymbol{\xi}}^\perp
 =
\left\{(\Psi,\Phi)\in X_{p,q,\epsilon}:  \int_{\Omega}(\nabla \Phi_{il}\nabla\Phi
+\nabla\Psi_{il}\nabla\Psi)dx=0,  i=1,\cdots,k \ \mbox{and} \ l=0,\cdots,N \right\},
\end{align*}
and
the projection operators
$\Pi_{\mathbf{d},\boldsymbol{\xi}}:
E_{\mathbf{d},\boldsymbol{\xi}}\rightarrow E_{\mathbf{d},\boldsymbol{\xi}}^\perp$
are
\[
\Pi_{\mathbf{d},\boldsymbol{\xi}}(\Psi,\Phi)
=\sum_{i=0}^{k}\sum_{l=0}^{N}c_{il}(P\Psi_{il},P\Phi_{il})
\quad\mbox{and}\quad
\Pi_{\mathbf{d},\boldsymbol\xi}^{\perp}
=\mathrm{Id}-\Pi_{\mathbf{d},\boldsymbol\xi}.
\]
Then,  solving (\ref{eqa}) is equivalent to find $ (\mathbf{d},\boldsymbol{\xi})\in\Lambda $ and   functions $(\Psi_{\mathbf{d},\boldsymbol{\xi}},\Phi_{\mathbf{d},\boldsymbol{\xi}})\in E_{\mathbf{d},\boldsymbol{\xi}}^\perp$   such that

(1) the auxiliary equation:
\begin{align}\label{ng2}
\Pi^{\bot}_{\mathbf{d},\boldsymbol{\xi}}
\Bigg\{\Big(\mathbf{P}U_{\mathbf{d},\boldsymbol{\xi}}
+& \Psi_{\mathbf{d},\boldsymbol{\xi}},
\sum_{i=1}^kPV_i+\Phi_{\mathbf{d},\boldsymbol{\xi}}\Big)\nonumber\\
& -\mathcal{I}^*\bigg[\bigg(g_\epsilon (\mathbf{P}U_{\mathbf{d},\boldsymbol\xi}
    +\Psi_{\mathbf{d},\boldsymbol\xi}),
    f_\epsilon\Big(\sum_{i=1}^{k}PV_{i}
    +\Phi_{\mathbf{d},\boldsymbol\xi}\Big)\bigg)\bigg]\Bigg\}=0,
\end{align}

(2) the bifurcation equation:
\begin{align}\label{ng1}
\Pi_{\mathbf{d},\boldsymbol{\xi}}
\Bigg\{\Big(\mathbf{P}U_{\mathbf{d},\boldsymbol{\xi}}
+ & \Psi_{\mathbf{d},\boldsymbol{\xi}},
\sum_{i=1}^kPV_i+\Phi_{\mathbf{d},\boldsymbol{\xi}}\Big)\nonumber\\
& -\mathcal{I}^*\bigg[\bigg(g_\epsilon (\mathbf{P}U_{\mathbf{d},\boldsymbol\xi}
    +\Psi_{\mathbf{d},\boldsymbol\xi}),
    f_\epsilon\Big(\sum_{i=1}^{k}PV_{i}
    +\Phi_{\mathbf{d},\boldsymbol\xi}\Big)\bigg)\bigg]\Bigg\}=0.
\end{align}
First of all we find, for every ($\mathbf{d},\boldsymbol{\xi}$) and for small $\epsilon$, a function $(\Psi_{\mathbf{d},\boldsymbol{\xi}}, \Phi_{\mathbf{d},\boldsymbol{\xi}})\in E_{\mathbf{d},\boldsymbol{\xi}}^\perp $ such that (\ref{ng2})  is
fulfilled.

\begin{proposition}\label{pro21}
There exists $\epsilon_0>0$ such that for any  for   $\epsilon\in(0,\epsilon_0)$
and $(\mathbf{d},\boldsymbol{\xi})\in \Lambda$,
one has the unique solution $(\Psi_{\mathbf{d},\boldsymbol{\xi}}^\epsilon,
\Phi_{\mathbf{d},\boldsymbol{\xi}}^\epsilon)\in E_{\mathbf{d},\boldsymbol{\xi}}^\perp$ to (\ref{ng2}). % satisfying
Moreover
\begin{eqnarray}\label{phi}
\|(\Psi_{\mathbf{d},\boldsymbol{\xi}}^{\epsilon},
\Phi_{\mathbf{d},\boldsymbol{\xi}}^{\epsilon}) \|
= O\Big(\mu^{(N-2)p-1}
  +
  \epsilon(\ln|\ln\mu|)
   [\mu^{\frac{Nq}{q+1}}+ \mu^{\frac{Np}{q+1}} ]\Big).
\end{eqnarray}
\end{proposition}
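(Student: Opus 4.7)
The plan is to recast the auxiliary equation \eqref{ng2} as a fixed point problem on $E_{\mathbf{d},\boldsymbol{\xi}}^\perp$ and solve it via the Banach contraction principle. Denote by $\mathcal{R}_{\mathbf{d},\boldsymbol{\xi},\epsilon}$ the projected residual of the approximate solution — i.e.\ the left-hand side of \eqref{ng2} evaluated at $(\Psi,\Phi)=(0,0)$ — by $L_{\mathbf{d},\boldsymbol{\xi}}$ the projected linearization of the nonlinear map at $(\mathbf{P}U_{\mathbf{d},\boldsymbol{\xi}},\sum_{i=1}^{k}PV_i)$, and by $\mathcal{N}_\epsilon(\Psi,\Phi)$ the collection of superlinear remainders (together with the higher-order contributions from Taylor-expanding the log factors of $f_\epsilon,g_\epsilon$). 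Equation \eqref{ng2} then takes the form
$$
L_{\mathbf{d},\boldsymbol{\xi}}(\Psi,\Phi) = \mathcal{R}_{\mathbf{d},\boldsymbol{\xi},\epsilon} + \mathcal{N}_\epsilon(\Psi,\Phi),
$$
and it suffices to prove that $T(\Psi,\Phi):=L_{\mathbf{d},\boldsymbol{\xi}}^{-1}\bigl[\mathcal{R}_{\mathbf{d},\boldsymbol{\xi},\epsilon}+\mathcal{N}_\epsilon(\Psi,\Phi)\bigr]$ is a contraction on the ball of radius equal to the right-hand side of \eqref{phi} in the norm \eqref{eaf}. Three ingredients are required: uniform invertibility of $L_{\mathbf{d},\boldsymbol{\xi}}$, an error bound $\|\mathcal{R}_{\mathbf{d},\boldsymbol{\xi},\epsilon}\|\lesssim \mu^{(N-2)p-1}+\epsilon(\ln|\ln\mu|)[\mu^{Nq/(q+1)}+\mu^{Np/(q+1)}]$, and a Lipschitz estimate for $\mathcal{N}_\epsilon$ of size $o(1)$.

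The delicate step is the uniform invertibility: there must exist $C>0$ independent of $(\mathbf{d},\boldsymbol{\xi})\in\Lambda$ and small $\epsilon$ with $\|L_{\mathbf{d},\boldsymbol{\xi}}(\Psi,\Phi)\|\geq C\|(\Psi,\Phi)\|$ on $E_{\mathbf{d},\boldsymbol{\xi}}^\perp$. I would argue by contradiction. Assuming sequences $\epsilon_n\to 0$, $(\mathbf{d}_n,\boldsymbol{\xi}_n)\in\Lambda$ and $(\Psi_n,\Phi_n)\in E_{\mathbf{d}_n,\boldsymbol{\xi}_n}^\perp$ with $\|(\Psi_n,\Phi_n)\|=1$ but $\|L_{\mathbf{d}_n,\boldsymbol{\xi}_n}(\Psi_n,\Phi_n)\|\to 0$, one rescales $(\Psi_n,\Phi_n)$ around each concentration point $\xi_{i,n}$ by $\mu_{i,n}$. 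Since $[\ln(e+|U_{\mu,\xi}|)]^{-\epsilon}\to 1$ locally uniformly as $\epsilon\to 0$, the rescaled sequence converges, up to a subsequence, to a bounded solution of the limit system \eqref{linear-equ}. The nondegeneracy classification of \cite{fkp} identifies this limit with an element of $\mathrm{span}\{(\Psi_{1,0}^l,\Phi_{1,0}^l)\}_{l=0}^N$. Testing the orthogonality defining $E_{\mathbf{d}_n,\boldsymbol{\xi}_n}^\perp$ against $(P\Psi_{il,n},P\Phi_{il,n})$ and using the asymptotic profiles \eqref{gisewr1}--\eqref{gisewr2} forces the limit to vanish at every scale, while a standard Green-function comparison handles the region away from the concentration points, contradicting $\|(\Psi_n,\Phi_n)\|=1$.

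Once $L_{\mathbf{d},\boldsymbol{\xi}}^{-1}$ is uniformly bounded, the remaining estimates are mechanical. For $\mathcal{R}_{\mathbf{d},\boldsymbol{\xi},\epsilon}$, the Calder\'on--Zygmund bound \eqref{embed} reduces matters to $L^{(p+1)/p}\times L^{(q+1)/q}$-estimates of pointwise residuals. Writing $[\ln(e+t)]^{-\epsilon}=1-\epsilon\ln\ln(e+t)+O(\epsilon^2(\ln\ln(e+t))^2)$ splits the residual into a geometric piece — bounded by $\mu^{(N-2)p-1}$ via Lemma \ref{uaoe} and Lemma \ref{puo} exactly as in \cite{kpm} — and a logarithmic piece which, upon integration against the bubble whose peak value is of order $\mu^{-N/(q+1)}$, produces the factor $\epsilon\ln|\ln\mu|$ times the relevant $L^r$-masses of the bubbles, yielding the remaining terms in \eqref{phi}. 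The Lipschitz estimate for $\mathcal{N}_\epsilon$ follows from the mean value theorem applied to $s\mapsto |s|^{r-1}s/[\ln(e+|s|)]^\epsilon$ combined with H\"older's inequality; the log-factor is harmless because $[\ln(e+|s|)]^{-\epsilon}\leq 1$. Combining the three ingredients, $T$ sends the ball of the prescribed radius into itself with Lipschitz constant $\leq 1/2$ for $\epsilon$ small, and Banach's fixed-point theorem produces the unique $(\Psi_{\mathbf{d},\boldsymbol{\xi}}^{\epsilon},\Phi_{\mathbf{d},\boldsymbol{\xi}}^{\epsilon})$ satisfying \eqref{ng2} with the bound \eqref{phi}. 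The main obstacle is the blow-up analysis for invertibility: although the logarithmic perturbation disappears pointwise, one has to quantify its effect on the $L^{p+1-\epsilon}\times L^{q+1-\epsilon}$ pieces of the norm \eqref{eaf} through a Sobolev interpolation between the $\dot{W}^{2,\cdot}$ parts and the Lebesgue parts of \eqref{eaf}, uniformly in $\epsilon$.
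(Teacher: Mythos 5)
Your proposal is correct and follows essentially the same route as the paper: a Banach fixed-point argument on $E_{\mathbf{d},\boldsymbol{\xi}}^\perp$, with uniform invertibility of the projected linearization $L_{\mathbf{d},\boldsymbol{\xi}}$ established by a contradiction/rescaling argument that uses the nondegeneracy classification of \eqref{linear-equ} and the orthogonality constraints, a residual bound obtained by splitting into the power-type geometric error and the $\epsilon\ln|\ln\mu|$ logarithmic perturbation via the Calder\'on--Zygmund estimate \eqref{embed}, and a Lipschitz bound on the superlinear remainder. The only cosmetic difference is that the paper quantifies the effect of the log factor through the explicit pointwise inequalities of Lemma~\ref{zsj} rather than the Sobolev interpolation you suggest for the $\epsilon$-dependent Lebesgue pieces of the norm.
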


The proof of Proposition \ref{pro21}  is postponed to Section 3.

With help of above  proposition,  there is a unique
$(\Psi_{\mathbf{d},\boldsymbol\xi},
\Phi_{\mathbf{d},\boldsymbol\xi}) \in E^{\bot}_{ \mathbf{d},\boldsymbol{\xi}}$ such that (\ref{ng2}) holds,
which means that there are some constants $c_{il}$ ($i =1,\cdots,k$ and $ l=0,\cdots,N$) such that
\begin{align}\label{ccf}
&\Big(\mathbf{P}U_{\mathbf{d},\boldsymbol\xi}
   +\Psi_{\mathbf{d},\boldsymbol\xi},
   \sum_{i=1}^{k}PV_{i}+\Phi_{\mathbf{d},\boldsymbol\xi}\Big)\nonumber\\
   & -\mathcal{I}^*\bigg[\bigg(g_\epsilon (\mathbf{P}U_{\mathbf{d},\boldsymbol\xi}
    +\Psi_{\mathbf{d},\boldsymbol\xi}),
    f_\epsilon\Big(\sum_{i=1}^{k}PV_{i}
    +\Phi_{\mathbf{d},\boldsymbol\xi}\Big)\bigg)\bigg]
=\sum_{i=1}^k\sum_{l=0}^Nc_{il} (P\Phi_{il},P\Psi_{il}),
\end{align}
it equals to  solving  equation (\ref{ng1}),
that is, the following result is valid,
 whose proof is postponed to Section 4.

We recall the identities
\[
(N-2)p-2=(N-2)(p+1)-N=\frac{N(p+1)}{q+1},
\]
and
let $\widetilde{\tau}(\xi)=\widetilde{H}_{\mathbf{d},\boldsymbol{\xi}}(\xi)$.

\begin{proposition}\label{leftside}
For $\mathbf{d}=(d_1,\cdots,d_k)$
and
$\boldsymbol\xi=(\xi_1,\cdots,\xi_k)$,
the following facts hold.

{\it Part $a$}. If  $(\mathbf{d},\boldsymbol{\xi})$ satisfies
\begin{align*}\label{aosd}
   \bigg\langle \Big(\mathbf{P}U_{\mathbf{d},\boldsymbol\xi}
 &  +\Psi_{\mathbf{d},\boldsymbol\xi},
   \sum_{i=1}^{k}PV_{i}+\Phi_{\mathbf{d},\boldsymbol\xi}\Big)\\  &  -\mathcal{I}^*\bigg[\bigg(g_\epsilon (\mathbf{P}U_{\mathbf{d},\boldsymbol\xi}
    +\Psi_{\mathbf{d},\boldsymbol\xi}),
    f_\epsilon\Big(\sum_{i=1}^{k}PV_{i}
    +\Phi_{\mathbf{d},\boldsymbol\xi}\Big)\bigg)\bigg],
    (P\Phi_{jh},
    P\Psi_{jh})\bigg\rangle
    =(0,0),
\end{align*}
for $j =1,\cdots,k$ and $ h=0,\cdots,N$.
Then $\Big(\mathbf{P}U_{\mathbf{d},\boldsymbol\xi}
   +\Psi_{\mathbf{d},\boldsymbol\xi},
   \sum\limits_{i=1}^{k}PV_{i}+\Phi_{\mathbf{d},\boldsymbol\xi}\Big)$ is a solution of  (\ref{eqa}).

{\it Part $b$}.
There holds
\begin{eqnarray*}
  && \bigg\langle \Big(\mathbf{P}U_{\mathbf{d},\boldsymbol\xi}
   +\Psi_{\mathbf{d},\boldsymbol\xi},
   \sum_{i=1}^{k}PV_{i}+\Phi_{\mathbf{d},\boldsymbol\xi}\Big)\\
& - & \mathcal{I}^*\bigg[\bigg(g_\epsilon (\mathbf{P}U_{\mathbf{d},\boldsymbol\xi}
    +\Psi_{\mathbf{d},\boldsymbol\xi}),
    f_\epsilon\Big(\sum_{i=1}^{k}PV_{i}
    +\Phi_{\mathbf{d},\boldsymbol\xi}\Big)\bigg)\bigg],
    (P\Phi_{jh},
    P\Psi_{jh})\bigg\rangle\\
&  = &
\left\{ \arraycolsep=1.5pt
   \begin{array}{lll}
  -\frac{ k}{N}\frac{\epsilon}{|\ln\mu|} \Big( (p+1)\mathcal{A}_1+ (q+1)\mathcal{\tilde{A}}_1\Big)
  -\frac{1}{N}
  \frac{\epsilon}{|\ln\mu|^2}
  \Big( (p+1)\mathcal{A}_1+ (q+1)\mathcal{\tilde{A}}_1\Big)
  \sum\limits_{i=1}^k |\ln d_i|\\
\ \  +
\mu^{(N-2)p-2}\bigg(
 \Big(\frac{b_{N,p}}{\gamma_N}\Big)^p \mathcal{A}_2
\sum\limits_{i=1}^kd_i^{\frac{N}{q+1}}
\widetilde{H}_{\mathbf{d},\boldsymbol{\xi}}(\xi_i)
 -a_{N,p}\mathcal{A}_4
\sum\limits_{j\neq i}^k\frac{d_{i}^{\frac{2N}{q+1}}
d_{j}^{\frac{N(p-1)}{q+1}}}{|\xi_{i}-\xi_{j}|^{(N-2)p-2}}
\bigg)
\\
\ \ + O\Big(\mu^{(N-2)p-1}
  +  \epsilon (\ln|\ln\mu|)
  [\mu^{\frac{Nq}{q+1}}+ \mu^{\frac{Np}{q+1}}]
 + \sum\limits_{i=1}^k \frac\epsilon{|\ln\mu_i|}
  \Big)\ \   \   {\rm if}\  h=0, \\[2mm]
  \Big(\frac{b_{N,p}}{\gamma_N}\Big)^p
  \mathcal{A}_3
  \mu^{(N-2)p-1}
\sum\limits_{i=1}^kd_i^{\frac{N}{q+1}+1}
\partial_{\xi_{ih}}\tilde{\rho}(\xi_i)
  +
  O\Big(\epsilon(\ln|\ln\mu|)
   [\mu^{\frac{Nq}{q+1}}+  \mu^{\frac{Np}{q+1}}] \Big)
   \ \  \    {\rm if}\  h=1,\cdots,N, \\[2mm]
\end{array}
\right.
\end{eqnarray*}

where $j =1,\cdots,k$, and
\begin{align*}
& \mathcal{A}_1=-\int_{\mathbb{R}^N}
     U^q_{1,0}(y)
     \ln \Big(U_{1,0}(y)\Big)\Psi_{1,0}^0(y)dy >0,\\
& \mathcal{\tilde{A}}_1=-\int_{\mathbb{R}^N}
     V^p_{1,0}(y)
     \ln \Big(V_{1,0}(y)\Big)\Phi_{1,0}^0(y)dy >0\\
     &  \mathcal{A}_2=q\int_{\mathbb{R}^N}
 U^{q-1}_{1,0}(y)
\Psi_{1,0}^0(y)dy,\quad
     \mathcal{A}_3= \int_{\mathbb{R}^N}
 U^{q}_{1,0}(y)dy,
\quad
 \mbox{and}\quad
\mathcal{A}_4
=\frac{1}{q}\int_{\mathbb{R}^N}  U^{q-1}_{1,0}(y)dy.
\end{align*}
\end{proposition}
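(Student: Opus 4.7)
\textbf{Part (a).} I test (\ref{ccf}) with $(P\Phi_{jh},P\Psi_{jh})$ in the inner product (\ref{inne}). The left-hand side vanishes by the standing assumption, leaving the linear system $\sum_{i,l}c_{il}\,\mathcal{M}_{(i,l),(j,h)}=0$, where $\mathcal{M}$ is the Gram matrix of the cokernel basis paired against $(P\Phi_{jh},P\Psi_{jh})$. Using (\ref{ssia}) together with the pointwise expansions of $P\Psi_{il},P\Phi_{il}$ and Lemma \ref{uaoe}, this matrix is, up to a vanishing error, block-diagonal in the index $i$, with $j$-th block equal to the non-degenerate Gram matrix of the kernel of (\ref{linear-equ}) described in (\ref{pssi}). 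Hence $c_{il}=0$ for every $i,l$ and the ansatz is a solution of (\ref{eqa}).

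\textbf{Setup for Part (b).} Using (\ref{ssia}) and the adjointness of $\mathcal I^*$, the pairing equals
\begin{align*}
\mathcal{J}_{jh}
= & \int_{\Omega}\!\bigl[qU_j^{q-1}\Psi_{jh}(\mathbf{P}U_{\mathbf{d},\boldsymbol\xi}+\Psi_{\mathbf{d},\boldsymbol\xi})+pV_j^{p-1}\Phi_{jh}({\textstyle\sum_i}PV_i+\Phi_{\mathbf{d},\boldsymbol\xi})\bigr]dx\\
& -\int_{\Omega}\!\bigl[g_\epsilon(\mathbf{P}U_{\mathbf{d},\boldsymbol\xi}+\Psi_{\mathbf{d},\boldsymbol\xi})P\Phi_{jh}+f_\epsilon({\textstyle\sum_i}PV_i+\Phi_{\mathbf{d},\boldsymbol\xi})P\Psi_{jh}\bigr]dx.
\end{align*}
A first-order Taylor expansion of $f_\epsilon,g_\epsilon$ in the rest $(\Psi_{\mathbf{d},\boldsymbol\xi},\Phi_{\mathbf{d},\boldsymbol\xi})$, combined with H\"older's inequality, (\ref{embed}) and the size bound from Proposition \ref{pro21}, shows that the rest-dependent contribution is absorbed in the claimed error term; one may therefore proceed as if $\Psi_{\mathbf{d},\boldsymbol\xi}=\Phi_{\mathbf{d},\boldsymbol\xi}=0$.

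\textbf{Geometric and $\epsilon$ contributions.} Write $[\ln(e+|w|)]^{-\epsilon}=1-\epsilon\ln\ln(e+|w|)+O(\epsilon^{2}(\ln\ln)^{2})$ to split $\mathcal{J}_{jh}$ into an $\epsilon=0$ (geometric) part plus an $O(\epsilon)$ part. For the geometric part, Lemma \ref{puo} gives $\mathbf{P}U_{\mathbf{d},\boldsymbol\xi}-\sum_{i}U_{i}=-\mu^{Np/(q+1)}(b_{N,p}/\gamma_{N})^{p}\widetilde{H}_{\mathbf{d},\boldsymbol\xi}+o(\mu^{Np/(q+1)})$ and Lemma \ref{uaoe} gives the analog for $PV_{i}-V_{i}$; scaling $y=(x-\xi_j)/\mu_j$ and using $(N-2)p-2=Np/(q+1)$ produces the $\widetilde H$-term with coefficient $(b_{N,p}/\gamma_{N})^{p}\mathcal{A}_{2}$ when $h=0$ and $(b_{N,p}/\gamma_{N})^{p}\mathcal{A}_{3}\,\partial_{\xi_{jh}}\widetilde{\rho}(\xi_{j})$ when $h\neq 0$ (the extra $\mu$ factor comes from (\ref{xy})), while pairings across distinct centers $i\neq j$ combined with the sharp tail $U_{1,0}(y)\sim a_{N,p}|y|^{-((N-2)p-2)}$ from Lemma \ref{ls} give the interaction term with coefficient $a_{N,p}\mathcal{A}_{4}$. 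For the $O(\epsilon)$ part, the same scaling yields
\[
\ln\ln(e+\mathbf{P}U)=\ln\!\bigl(\tfrac{N}{q+1}|\ln\mu_{j}|\bigr)+\frac{\ln U_{1,0}(y)}{(N/(q+1))|\ln\mu_{j}|}+o(|\ln\mu|^{-1}).
\]
When $h=0$ the constant $\ln|\ln\mu_{j}|$-piece cancels against its $f_\epsilon$ companion via the Pohozaev relation $\int_{\mathbb R^N}(qU_{1,0}^{q-1}(\Psi_{1,0}^0)^2+pV_{1,0}^{p-1}(\Phi_{1,0}^0)^{2})\,dy=0$, and the $|\ln\mu_{j}|^{-1}$-piece yields the $\frac{\epsilon}{|\ln\mu|}$ and $\frac{\epsilon}{|\ln\mu|^{2}}|\ln d_i|$ terms with coefficients $\mathcal{A}_1,\tilde{\mathcal{A}}_1$ upon expanding $|\ln\mu_{j}|^{-1}=|\ln\mu|^{-1}-\ln d_j/|\ln\mu|^{2}+\cdots$ and summing over $j$; when $h\neq 0$, oddness of $\Psi_{1,0}^h=\partial_h U_{1,0}$ in $y$ kills the $O(\epsilon)$ leading order, which is then absorbed in the error.

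\textbf{Main obstacle.} The delicate step is this $\ln\ln$-expansion under scaling: one must simultaneously extract the $\ln|\ln\mu_j|$ prefactor, invoke the Pohozaev cancellation at $h=0$, and isolate the $\ln d_j/|\ln\mu|^{2}$ correction coming from $|\ln\mu_j|=|\ln\mu|(1+\ln d_j/\ln\mu)$. A further technical point is the verification that the constants $\mathcal{A}_1,\tilde{\mathcal{A}}_1$ are finite---the algebraic decay of $U_{1,0}^q,V_{1,0}^p$ from Lemma \ref{ls} beats the logarithmic growth of $\ln U_{1,0},\ln V_{1,0}$---and strictly positive, which follows from integration by parts against the dilation generators $\Psi_{1,0}^0,\Phi_{1,0}^0$ in (\ref{pssi}). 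The far-region contribution $|y|\geq\mu^{-\beta}$ for suitably chosen $\beta\in(0,1)$ then drops into the remainder $O(\sum_i\epsilon/|\ln\mu_i|)$ appearing in Part (b).
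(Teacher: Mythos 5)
Your overall roadmap for Part (b) — test (\ref{ccf}) against $(P\Phi_{jh},P\Psi_{jh})$, freeze the rest $(\Psi_{\mathbf{d},\boldsymbol\xi},\Phi_{\mathbf{d},\boldsymbol\xi})$ using Proposition \ref{pro21}, then split into an $\epsilon=0$ geometric part (treated via Lemmas \ref{uaoe} and \ref{puo} and the tail asymptotics of Lemma \ref{ls}) and an $O(\epsilon)$ part coming from the $\ln\ln$ expansion — is the same decomposition the paper carries out ($\mathcal{M}_1=P_1+\cdots+P_5$, $\mathcal{M}_2=Q_1+\cdots+Q_7$, with $P_2,Q_3$ giving $\mathcal{A}_1,\tilde{\mathcal{A}}_1$ and $Q_5$ giving $\mathcal{A}_2,\mathcal{A}_3,\mathcal{A}_4$). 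Part (a) is also standard and matches the paper's appeal to the nondegeneracy of the Gram matrix in Lemma \ref{inerpro}.

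However, the ``Pohozaev relation'' you invoke to kill the $\ln|\ln\mu_j|$-piece is wrong, and the mechanism you describe (cross-cancellation between the $f_\epsilon$ and $g_\epsilon$ companions) is not what actually happens. You write $\int_{\mathbb R^N}\bigl(qU_{1,0}^{q-1}(\Psi_{1,0}^0)^2+pV_{1,0}^{p-1}(\Phi_{1,0}^0)^{2}\bigr)dy=0$, but both summands are pointwise nonnegative (since $U_{1,0},V_{1,0}>0$), so this integral is strictly positive; it cannot vanish. The identity that the proof actually needs, and which the paper derives at the end of Lemma \ref{pf1}, is
\[
\int_{\mathbb R^N}U_{1,0}^q\,\Psi_{1,0}^0\,dy=0\quad\text{and}\quad\int_{\mathbb R^N}V_{1,0}^p\,\Phi_{1,0}^0\,dy=0,
\]
each on its own. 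One gets these from two integrations by parts: testing $-\Delta U=V^p$ against $\Phi^0$ gives $\int V^p\Phi^0=q\int U^q\Psi^0$, and testing $-\Delta V=U^q$ against $\Psi^0$ gives $\int U^q\Psi^0=p\int V^p\Phi^0$; since $pq>1$, both integrals must vanish. Consequently the constant $\ln|\ln\mu_j|$ coefficient in $Q_3$ (namely $\int U_{1,0}^q\Psi_{1,0}^0\,dy$) and in $P_2$ (namely $\int V_{1,0}^p\Phi_{1,0}^0\,dy$) each vanish separately — there is no cancellation between the $g$ and $f$ pieces. With the identity you wrote the argument would not go through; with the correct one it does, and the subleading $|\ln\mu_j|^{-1}$ terms then produce $\mathcal{A}_1,\tilde{\mathcal{A}}_1$ exactly as you indicate.
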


From Propositions \ref{pro21} and \ref{leftside},
we   view that  $\Big(\mathbf{P}U_{\mathbf{d},\boldsymbol\xi}
   +\Psi_{\mathbf{d},\boldsymbol\xi},
   \sum\limits_{i=1}^{k}PV_{i}+\Phi_{\mathbf{d},\boldsymbol\xi}\Big) $
is the solution to system (\ref{eqa}) if there  are $\mathbf{d}_\epsilon>0$ and $\boldsymbol\xi_\epsilon\in \Omega^k$ such that $c_{il}$ ($i=1,\cdots,k$ and $ l=0,\cdots,N$) are zero when $\epsilon$ small enough.

We are now ready to prove our main results.

\noindent{\it \textbf{Proof of   Theorem  \ref{kia}}}.
By Proposition \ref{leftside}, we have
\begin{align*}
G_0(d,\xi)
= &
-\frac{ k}{N}\frac{\epsilon}{|\ln\mu|} \Big( (p+1)\mathcal{A}_1+ (q+1)\mathcal{\tilde{A}}_1\Big)
  -\frac{1}{N}\frac{\epsilon}{|\ln \mu|^2}|\ln d| \Big( (p+1)\mathcal{A}_1+ (q+1)
  \mathcal{\tilde{A}}_1\Big)
 \\
& +
\Big(\frac{b_{N,p}}{\gamma_N}\Big)^p \mathcal{A}_2
 \mu^{(N-2)p-2}d^{\frac{N}{q+1}}\widetilde{\tau}(\xi)\\
& + O\bigg(\mu^{(N-2)p-1}
  + \epsilon(\ln|\ln\mu|)
  [ \mu^{\frac{Nq}{q+1}}
  +  \mu^{\frac{Np}{q+1}}]
   \frac{\epsilon}{|\ln\mu|}
  \bigg)\\
%= & -\frac{\epsilon}{N} \Big( (p+1)\mathcal{A}_1+ (q+1)\mathcal{\tilde{A}}_1\Big)
%\Big( \frac{k}{|\ln\mu|}+\frac{1}{|\ln d|}\Big)
% +
%  q \Big(\frac{b_{N,p}}{\gamma_N}\Big)^p \tilde{a}_6
%\mu^{N-\frac{N(p+1)}{q+1}}
%d^{\frac{N}{q+1}}
%\widetilde{\tau}(\xi)\\
%& + O\Big(\mu^{(N-2)p-1}
%  + \epsilon(\ln|\ln\mu|)
%  \Big[  \mu^{\frac{Nq}{q+1}}
%  +  \mu^{\frac{Np}{q+1}}
%   (\ln|\ln\mu|)\Big]
%   +(
% +  )\frac{\epsilon}{|\ln\mu|}
%  \Big)\\
 = &- C_0\frac{\epsilon}{|\ln\mu|}
 -C_1 \frac{\epsilon}{|\ln \mu|^2}|\ln d|
 +
  C_2
\mu^{(N-2)p-2}d^{\frac{N}{q+1}}\widetilde{\tau}(\xi)
+(h.o.t.),
\end{align*}
and for $h=1,\cdots,N$,
\begin{align*}
 G_h(d,\xi)
= &
  \Big(\frac{b_{N,p}}{\gamma_N}\Big)^p
  \mathcal{A}_3
  \mu^{(N-2)p-1}
\sum\limits_{i=1}^kd_i^{\frac{N}{q+1}+1}
\partial_{\xi_{ih}}\tilde{\rho}(\xi_i)
 + O\Big(\mu^{(N-2)p-1}
  +
  \epsilon (\ln|\ln\mu|)
  [\mu^{\frac{Nq}{q+1}}+ \mu^{\frac{Np}{q+1}}]
   \Big) \\
  = &   C_3
  \mu^{(N-2)p-1}
\sum\limits_{i=1}^kd_i^{\frac{N}{q+1}+1}
\partial_{\xi_{ih}}\tilde{\rho}(\xi_i)
+(h.o.t.).
\end{align*}
In function $G_0$,
it will turn out that a convenient choice for $\mu$ gives their size of the
order
\begin{equation}\label{awao}
\frac{\epsilon}{|\ln\mu|^2}=\mu^{(N-2)p-2}
\Longrightarrow
\epsilon= \mu^{(N-2)p-2}|\ln\mu|^2.
\end{equation}
Then
\[
G_0(d,\xi)
= - C_0 \mu^{(N-2)p-2}|\ln\mu|
-\mu^{(N-2)p-2}
\underbrace{\Big(C_1|\ln d|- C_2
d^{\frac{N}{q+1}}
\widetilde{\tau}(\xi)
\Big)}_{\tilde{G}_0(d,\xi)}+(h.o.t.).
\]
Let $\xi_0\in\Omega$  be a strict minimum point  of  function $\widetilde{\rho}$,
from (\ref{uwo}), we choose $\delta_1$, $\delta_2$
 small enough,
it follows that  the
function $\tilde{G}_0$ have a strict minimum point in $\mbox{int}(\Lambda)$,
which means that  $G_0$ has a minimum point in  $\mbox{int}(\Lambda)$ as $\epsilon$ goes to zero.
\qed

\noindent{\it \textbf{Proof of   Theorem  \ref{kisa}}.}
From (\ref{jua}),
let $
\widetilde{H}^\eta_{\mathbf{d},\boldsymbol{\xi}}$,
$\tilde{\rho}^\eta$
be the function introduced  for the dumbbell-shaped domain $\Omega_\eta$.
The functions
$G_{h\eta}$
and $G^\epsilon_{h\eta}$ for $h=0,\cdots,N$
 are related to the disconnected domain $\Omega_0=\cup_{i=1}^l\Omega_i^*$.
By Proposition \ref{leftside}, for $h=0$, we write
\begin{equation*}
G^\epsilon_{0\eta}(\mathbf{d},\boldsymbol{\xi})
=    C_3
  - C_4
  \frac{\epsilon}{|\ln\mu|^2}
  \sum\limits_{i=1}^k |\ln d_i|
   +\mu^{(N-2)p-2}
   G_{0\eta}(\mathbf{d},\boldsymbol{\xi})
+ (h.o.t.),
\end{equation*}
with
\begin{equation*}
G_{0\eta}(\mathbf{d},\boldsymbol{\xi})
  = \bigg(
 \Big(\frac{b_{N,p}}{\gamma_N}\Big)^p \mathcal{A}_2
\sum\limits_{i=1}^kd_i^{\frac{N}{q+1}}
\widetilde{H}^\eta_{\mathbf{d},\boldsymbol{\xi}}(\xi_i)
 -a_{N,p}\mathcal{A}_4
\sum\limits_{j\neq i}^k\frac{d_{i}^{\frac{2N}{q+1}}
d_{j}^{\frac{N(p-1)}{q+1}}}{|\xi_{i}-\xi_{j}|^{(N-2)p-2}}
\bigg),
\end{equation*}
and for $h=1,\cdots,N$,
\begin{equation*}
G^\epsilon_{h\eta}(\mathbf{d},\boldsymbol{\xi})
  =C_5
  \mu^{(N-2)p-1}
\sum\limits_{i=1}^kd_i^{\frac{N}{q+1}+1}
\partial_{\xi_{ih}}\tilde{\rho}^\eta(\xi_i)
+ (h.o.t.),
\end{equation*}
We obtain the same relation as (\ref{awao}).
Let $\Lambda_0$ be the configuration space  $\Lambda$  defined in (\ref{uwo}) related to $\Omega_0$.
By Lemmas 6.1-6.2 in  \cite{kpm}, we obtain
\begin{align*}
G^\epsilon_{0\eta}(\mathbf{d},\boldsymbol{\xi}) \rightarrow
\hat{G}_0(\mathbf{d},\boldsymbol{\xi})
=
 \Big(\frac{b_{N,p}}{\gamma_N}\Big)^p \mathcal{A}_2
\sum\limits_{i=1}^k d_i^{\frac{N(p+1)}{q+1}}
\widetilde{\tau}_{\Omega_i^*}(\xi_i),
\end{align*}
uniformly on $\Lambda_0$   as  $\epsilon\rightarrow0$.
Thus,
\begin{align*}
& G^\epsilon_{0\eta}(\mathbf{d},\boldsymbol{\xi}) \rightarrow
\tilde{G}_0(\mathbf{d},\boldsymbol{\xi})
=
-\mu^{(N-2)p-2}
\bigg(C_4\sum\limits_{i=1}^k |\ln d_i|
- C_6
\sum\limits_{i=1}^k d_i^{\frac{2N(p+1)}{q+1}}
\widetilde{\tau}_{\Omega_i^*}(\xi_i)
\bigg),\\
& G^\epsilon_{h\eta}(\mathbf{d},\boldsymbol{\xi}) \rightarrow
\tilde{G}_h(\mathbf{d},\boldsymbol{\xi})
=C_5
  \mu^{(N-2)p-1}
\sum\limits_{i=1}^kd_i^{\frac{N}{q+1}+1}
\partial_{\xi_{ih}}\tilde{\rho}^\eta(\xi_i),
\quad\mbox{for}\ h=1,\cdots,N,
\end{align*}
uniformly on $\Lambda_0$  as $ \epsilon\rightarrow0$.

The functions $\tilde{G}_0$ and $\tilde{G}_h$ have a strict minimum point $(\mathbf{d}_0,\boldsymbol{\xi}_0) \in(0,\infty)^k\times (\Omega_1^*\times\cdots \times\Omega_k^*)$.
It follows that  the functions  $G^\epsilon_{0\eta}$ and $G^\epsilon_{h\eta}$ also have a strict minimum point $(\mathbf{d}_\eta,\boldsymbol{\xi}_\eta) \in (0,\infty)^k\times (\Omega_1^*\times\cdots \times\Omega_k^*)$ provided that $\eta$ is small enough.
Thus,  we obtain the existence of  a minimum
point provided that $\epsilon$ is small enough.
Moreover, we deduce that
the right hand side of (\ref{ccf}) is zero, i.e.,
\[
\sum_{i=1}^k\sum_{l=0}^Nc_{il}
\Big\langle (P\Phi_{il},P\Psi_{il}),
(P\Phi_{jh},P\Psi_{jh})\Big\rangle=0,
\]
for $j=1,\cdots,k$ and $h=0,\cdots,N$,
and by Lemma \ref{inerpro}, we conclude that $c_{il} $  are zero.
 We finish the proof of this theorem.
\qed

\section{The  finite dimensional reduction}

In this section, we outline the main steps of the so-called finite-dimensional reduction.
That is, we need to  prove Proposition \ref{pro21}.
First, we define a linear operator $L_{\mathbf{d},\boldsymbol\xi}:E_{\mathbf{d},\boldsymbol{\xi}}^\perp \to E_{\mathbf{d},\boldsymbol{\xi}}^\perp $ by
\begin{equation}\label{lineare}
\begin{aligned}
L_{\mathbf{d},\boldsymbol{\xi}}(\Psi,\Phi)& =(\Psi,\Phi)-\Pi_{\mathbf{d},\boldsymbol{\xi}}^{\perp}
\mathcal{I}^{*}
\Bigg[\bigg(g^{'}_\epsilon (\mathbf{P}U_{\mathbf{d},\boldsymbol\xi}
    +\Psi_{\mathbf{d},\boldsymbol\xi})\Psi,
    f^{'}_\epsilon\Big(\sum_{i=1}^{k}PV_{i}
    +\Phi_{\mathbf{d},\boldsymbol\xi}\Big)\Phi\bigg)\bigg].
\end{aligned}
\end{equation}

Arguing as in \cite{kpm}, one can prove the following  invertibility of the operator $L_{\mathbf{d},\boldsymbol{\xi}}$  on $E_{\mathbf{d},\boldsymbol{\xi}}^\perp$.
\begin{lemma}\label{inver}
Reduce the value of $\epsilon_{0}>0$ if necessary.
Then there is a universal constant $C>0$ such that for each $\epsilon\in(0,\epsilon_{0})$ and $(\mathbf{d},\boldsymbol\xi)\in\Lambda $, the operator $L_{\mathbf{d},\boldsymbol{\xi}}$ satisfies
\begin{equation}\label{lal}
\|L_{\mathbf{d},\boldsymbol{\xi}}(\Psi,\Phi)\|\geq C\|(\Psi,\Phi)\|\quad for\:every\:(\Psi,\Phi)\in E_{\mathbf{d},\boldsymbol{\xi}}^\perp.
\end{equation}
\end{lemma}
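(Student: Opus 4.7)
The plan is to follow the by-now-standard blow-up contradiction argument for the invertibility of the linearized operator in a Lyapunov--Schmidt scheme, closely mirroring the strategy of \cite{kpm}. Suppose the estimate \eqref{lal} fails. Then there exist sequences $\epsilon_n\downarrow 0$, configurations $(\mathbf{d}_n,\boldsymbol{\xi}_n)\in\Lambda$, and pairs $(\Psi_n,\Phi_n)\in E_{\mathbf{d}_n,\boldsymbol{\xi}_n}^\perp$ with $\|(\Psi_n,\Phi_n)\|=1$ such that $\|L_{\mathbf{d}_n,\boldsymbol{\xi}_n}(\Psi_n,\Phi_n)\|\to 0$. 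Unwinding the definition \eqref{lineare}, this reads
\[
(\Psi_n,\Phi_n)=\Pi^{\perp}_{\mathbf{d}_n,\boldsymbol{\xi}_n}
\mathcal{I}^{*}\!\left[\Big(g'_{\epsilon_n}(\mathbf{P}U_{\mathbf{d}_n,\boldsymbol{\xi}_n}+\Psi_{\mathbf{d}_n,\boldsymbol{\xi}_n})\,\Psi_n,\;
f'_{\epsilon_n}\Big(\sum_{i=1}^k PV_i+\Phi_{\mathbf{d}_n,\boldsymbol{\xi}_n}\Big)\Phi_n\Big)\right]+o(1).
\]

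Next I would rescale around each concentration point. For $i=1,\dots,k$, set
\[
\tilde{\Psi}_{n,i}(y)=\mu_{n,i}^{N/(q+1)}\Psi_n(\mu_{n,i}y+\xi_{n,i}),\qquad
\tilde{\Phi}_{n,i}(y)=\mu_{n,i}^{N/(p+1)}\Phi_n(\mu_{n,i}y+\xi_{n,i}).
\]
The relevant norms being scale invariant, these sequences are bounded in $\dot W^{2,(p+1)/p}(\mathbb{R}^N)\times\dot W^{2,(q+1)/q}(\mathbb{R}^N)$. Using that $\mathbf{P}U_{\mathbf{d}_n,\boldsymbol{\xi}_n}+\Psi_{\mathbf{d}_n,\boldsymbol{\xi}_n}\to U_{1,0}$ and $\sum_i PV_i+\Phi_{\mathbf{d}_n,\boldsymbol{\xi}_n}\to V_{1,0}$ in the rescaled variable (Lemma \ref{uaoe} and Proposition \ref{pro21}), and that the non-power corrections satisfy $[\ln(e+|U_i|)]^{-\epsilon_n}\to 1$ uniformly on compacts while the extra logarithmic derivative terms carry a factor $\epsilon_n\to 0$, one obtains in the limit (up to a subsequence) a pair $(\tilde{\Psi}_i,\tilde{\Phi}_i)$ solving the linearized system \eqref{linear-equ} on $\mathbb{R}^N$.

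The orthogonality relation encoding $(\Psi_n,\Phi_n)\in E_{\mathbf{d}_n,\boldsymbol{\xi}_n}^\perp$, tested against $(P\Psi_{il},P\Phi_{il})$ and passed to the limit using the estimates \eqref{gisewr1}--\eqref{gisewr2}, forces $(\tilde{\Psi}_i,\tilde{\Phi}_i)$ to be orthogonal to the full kernel spanned by \eqref{pssi}. By the nondegeneracy result of \cite{fkp}, this kernel is exhaustive, hence $(\tilde{\Psi}_i,\tilde{\Phi}_i)\equiv(0,0)$ for every $i$. One then shows that the $L^{p+1}\times L^{q+1}$ norm of $(\Psi_n,\Phi_n)$ concentrated in each ball $B(\xi_{n,i},R\mu_{n,i})$ vanishes, and controls the exterior contribution via the decay estimates in Lemma \ref{ls} and a direct Calderón--Zygmund bound \eqref{embed} applied to the equation for $(\Psi_n,\Phi_n)$. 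Combining these contributions yields $\|(\Psi_n,\Phi_n)\|\to 0$, contradicting the normalization.

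The main obstacle is ensuring that the strong coupling between the two components, together with the low regularity in the $u$-component (reflecting $p<(N-1)/(N-2)$ and the need to work with the second approximation $\mathbf{P}U_{\mathbf{d},\boldsymbol{\xi}}$ rather than $\sum PU_i$), does not spoil the limit passage: one has to show that the interactions between distinct bubbles produce only $o(1)$ contributions in the scale-critical norms, and that the non-power factor does not introduce a singularity in the linearization. Fortunately the logarithmic factor is bounded above and below on any compact set and its derivative contributes only a term of order $\epsilon_n$, so these difficulties are effectively disposed of by the same arguments as in the power case \cite{kpm}.
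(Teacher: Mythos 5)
Your proposal follows the same blow-up contradiction strategy as the paper: assume a normalized sequence in the approximate kernel complement with $\|L_{\mathbf{d}_n,\boldsymbol{\xi}_n}(\Psi_n,\Phi_n)\|\to 0$, rescale around each concentration point, pass to a limit pair solving the linearized system \eqref{linear-equ}, invoke the nondegeneracy result of \cite{fkp} together with the inherited orthogonality to conclude the limit vanishes, and then derive a contradiction with the normalization. This is exactly the scheme used in the paper's three-step proof.

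Two points worth flagging, though neither is a wrong turn so much as an elision. First, unwinding $\Pi^{\perp}_{\mathbf{d}_n,\boldsymbol{\xi}_n} = \mathrm{Id} - \Pi_{\mathbf{d}_n,\boldsymbol{\xi}_n}$ produces the unknown coefficient terms $\sum_{i,l} c_{il,n}(P\Psi_{il},P\Phi_{il})$ on the right-hand side of the rearranged equation, and before you can pass to the limit in the rescaled equation you must show $c_{il,n}\to 0$. The paper devotes its entire Step~1 to this, testing against $(P\Phi^h_{jm},P\Psi^h_{jm})$ and using Lemma \ref{inerpro} together with the estimates \eqref{aegs}--\eqref{aesgs}, \eqref{gisewr1}--\eqref{gisewr2} and the asymptotics of Lemma \ref{ls}; you only mention the projection abstractly and do not dispose of these coefficients. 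Second, the paper inserts a cutoff $\chi$ supported near each concentration point before rescaling, so that the rescaled functions are genuinely defined on all of $\mathbb{R}^N$ and one can test with arbitrary $C_c^\infty$ functions; your rescaled pair $(\tilde\Psi_{n,i},\tilde\Phi_{n,i})$ lives on $(\Omega-\xi_{n,i})/\mu_{n,i}$ and this should be made precise. These are technical gaps in the sketch rather than conceptual errors; if filled in, your argument matches the paper's.
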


\begin{proof}
We prove it by contradiction.
For $m\in \mathbb{N}_+$,
assume that there are sequences of parameters    $ \epsilon_m\rightarrow0$,
$\boldsymbol{\xi}_m=(\xi_{1m},\cdots,\xi_{km})\in\Omega^k$  and
$\mathbf{d}_m =(d_{1m},\cdots,d_{km})\in \R^k_+$
with $\boldsymbol{\xi}_m\rightarrow \boldsymbol{\xi}_\infty\in\Omega$,
and $d_{im}\rightarrow \mathbf{d}_\infty>0$,
$i=1,\cdots,k$,
$(\Psi_m,\Phi_m)$, $(H_{1m},H_{2m})\in E_{\mathbf{d}_m,\boldsymbol{\xi}_m}^\perp$
such that
\begin{equation}\label{assu}
 \|L_{\mathbf{d}_m,\boldsymbol{\xi}_m}(\Psi_m,\Phi_m)\|
 =(H_{1m},H_{2m}),
\end{equation}
and
\begin{equation}\label{aye}
  \|(\Psi_m,\Phi_m)\|=1 \quad\mbox{and}\quad  \|(H_{1m},H_{2m})\|\rightarrow0.
\end{equation}
Let $\mathbf{P}U_m
=\mathbf{P}U_{\mathbf{d}_m,\boldsymbol\xi_m}$.
From (\ref{lineare}) and (\ref{assu}),
there is a sequence  $\{c_{il,m}\}$ ($i=1,\cdots,k$ and $
l=0,\cdots,N$) of coefficients such that
\begin{equation}\label{linear}
\begin{aligned}
(\Psi_m,\Phi_m)
- & \mathcal{I}^{*}
\bigg[\bigg(g^{'}_{\epsilon_m} (\mathbf{P}U_m)
\Psi_m,
f^{'}_{\epsilon_m}\Big(\sum_{i=1}^{k}PV_{im}\Big)
\Phi_m\bigg)\bigg]\\
= & (H_{1m},H_{2m})
+\sum_{i=1}^k \sum_{l=0}^N c_{il,m} (P\Psi^l_{im},P\Phi^l_{im}).
\end{aligned}
\end{equation}
The proof of this result consists of three steps.

\textbf{Step 1.}
We prove that
\begin{equation}\label{lfimi}
\sum\limits_{i=1}^k\sum\limits_{l=0}^N c_{il,m} \rightarrow0, \quad\mbox{as}\ m\rightarrow \infty.
\end{equation}
We multiply (\ref{linear})
by  $(P\Phi^h_{ jm},P\Psi^h_{ jm})$ for $j=1,\cdots,k$ and $h=1,\cdots,N$,
and integrating over  $\Omega$, using (\ref{ssia}) and (\ref{aye}),  we have
\begin{align}\label{aea}
&  \int_\Omega\sum\limits_{i=1}^k
\sum\limits_{l=0}^Nc_{il,m} \Big(f^{'}_0(V_{im})\Phi^l_{im}P\Phi^h_{ jm}
+g^{'}_0(U_{im}) \Psi^l_{im}P\Psi^h_{ jm}\Big)dx\nonumber\\
= &  \int_\Omega
\bigg[f^{'}_{\epsilon_m}\Big(\sum_{i=1}^{k}PV_{im}\Big)
\Phi_mP\Phi^h_{ jm}
+g^{'}_{\epsilon_m} (\mathbf{P}U_{im})
\Psi_mP\Psi^h_{ jm}
\bigg]dx.
\end{align}
From Lemma \ref{inerpro},
the left side in (\ref{aea}) is
\begin{align}\label{aea3}
& \sum\limits_{i=1}^k
\sum\limits_{l=0}^Nc_{il,m}
\int_\Omega\Big(f^{'}_0(V_{im})\Phi^l_{im}P\Phi^h_{ jm}
+g^{'}_0(U_{im}) \Psi^l_{im}P\Psi^h_{ jm}\Big)dx\nonumber\\
= &  \sum\limits_{i=1}^k\sum\limits_{l=0}^N c_{il,m}
\Big[\int_{\mathbb{R}^N} \Big(f_0^{'}(V_{1,0})(\Phi^l_{1,0})^2
+g^{'}_0(U_{1,0})  (\Psi^l_{1,0})^2\Big)dy
+o(1)\Big].
\end{align}
Now, from  (\ref{aegs}), (\ref{aesgs}),  (\ref{subu2}),  (\ref{subu3}),
(\ref{gisewr1}), (\ref{pssi}), (\ref{psi}),
Lemmas \ref{yy}-\ref{ysy} and \ref{ls},
one has
\begin{align*}
 & \int_\Omega
\bigg[f^{'}_{\epsilon_m}\Big(\sum_{i=1}^{k}PV_{im}\Big)
\Phi_mP\Phi^h_{ jm}
+g^{'}_{\epsilon_m} (\mathbf{P}U_m)
\Psi_mP\Psi^h_{ jm}
\bigg]dx
\nonumber\\
  = & \int_\Omega \bigg[f_{\epsilon_m}^{'}\Big(\sum_{i=1}^{k}PV_{im}\Big)
  -f_0^{'}\Big(\sum_{i=1}^{k}PV_{im}\Big)\bigg]\Phi_m
  \Big(P\Phi^h_{jm}-\Phi^h_{jm}\Big)dx\nonumber\\
 &  + \int_\Omega\bigg[ f_{\epsilon_m}^{'}\Big(\sum_{i=1}^{k}PV_{im}\Big)
 -f_0^{'}\Big(\sum_{i=1}^{k}PV_{im}\Big)\bigg]
  \Phi_m \Phi^h_{jm}dx\nonumber\\
  & + \int_\Omega\bigg[f_0^{'}
  \Big(\sum_{i=1}^{k}PV_{im}\Big)
  -\sum\limits_{i=1}^kf_0^{'}(PV_{im})\bigg]   \Phi_m P\Phi^h_{jm}dx
     + \sum\limits_{i=1}^k
    \int_\Omega\Big[f_0^{'}(PV_{im})
    -f_0^{'}(V_{im})\Big]   \Phi_m P\Phi^h_{jm}dx\nonumber\\
   & + \int_\Omega
   \Big[g^{'}_{\epsilon_m} (\mathbf{P}U_m)
   -\sum_{i=1}^{k}g^{'}_0 (U_{im})\Big]
\Psi_mP\Psi^h_{ jm}dx
+ \sum_{i=1}^{k}\int_\Omega
   g^{'}_0 (U_{im})
\Psi_mP\Psi^h_{ jm}dx\nonumber\\
  \leq  &   \bigg|f_{\epsilon_m}^{'}\Big(\sum_{i=1}^{k}PV_{im}\Big)
  -f_0^{'}\Big(\sum_{i=1}^{k}PV_{im}\Big)\bigg|_{L^{\frac{p+1}{p-1}}(\Omega)}
  |\Phi_m|_{L^{p+1}(\Omega)}
  \Big|P\Phi^h_{jm}-\Phi^h_{jm}\Big|_{L^{p+1}(\Omega)}\nonumber\\
  & +  \bigg|f_{\epsilon_m}^{'}\Big(\sum_{i=1}^{k}PV_{im}\Big)-f_0^{'}\Big(\sum_{i=1}^{k}PV_{im}\Big) \bigg|_{L^{\frac{p+1}{p-1}}(\Omega)}|\Phi_m|_{L^{p+1}(\Omega)}
  | \Phi^h_{jm}|_{L^{p+1}(\Omega)}\nonumber\\
  & + \bigg|f_0^{'}\Big(\sum_{i=1}^{k}PV_{im}\Big)
  -\sum\limits_{i=1}^kf_0^{'}(PV_{im})\bigg|_{L^{\frac{p+1}{p-1}}(\Omega)}
   |P\Phi^h_{jm}|_{L^{p+1}(\Omega)}
   |\Phi_m|_{L^{p+1}(\Omega)}\nonumber\\
  & + \sum\limits_{i=1}^k\Big|f_0^{'}(PV_{im})-f_0^{'}(V_{im})\Big|_{L^{\frac{p+1}{p-1}}(\Omega)}
   |P\Phi^h_{jm}|_{L^{p+1}(\Omega)}
   |\Phi_m|_{L^{p+1}(\Omega)}\nonumber\\
  & + \Big|g^{'}_{\epsilon_m} (\mathbf{P}U_m)
   -\sum_{i=1}^{k}g^{'}_0 (U_{im})\Big|_{L^{\frac{q+1}{q-1}}(\Omega)}
|\Psi_m|_{L^{q+1}(\Omega)}|P\Psi^h_{ jm}|_{L^{q+1}(\Omega)}\nonumber\\
&+ \sum_{i=1}^{k}|
   g^{'}_0 (U_{im})|_{L^{\frac{q+1}{q-1}}(\Omega)}
|\Psi_m|_{L^{q+1}(\Omega)}|P\Psi^h_{ jm}|_{L^{q+1}(\Omega)}\nonumber\\
= & O \Big(\epsilon_m(\ln|\ln\mu_m|)
   [\mu_m^{\frac{Np}{p+1}}+  \mu_m^{\frac{Np}{q+1}}]
   +
   \mu_m^{(N-2)p}\Big).
  \end{align*}
In conclusion, from which and  (\ref{aea3}), we obtain (\ref{lfimi}).

\textbf{Step 2.}
We set  a smooth cut-off function $\chi:\mathbb{R}^N\rightarrow[0,1]$ as
\begin{align}\label{eqda}
\chi(x) =
\left\{ \arraycolsep=1.5pt
   \begin{array}{lll}
1 \ \   & {\rm in}\ B(\xi_{h\infty},\varrho), \\[2mm]
0 \ \   & {\rm in}\ \Omega\setminus B(\xi_{h\infty},2\varrho) ,
\end{array}
\right.
\quad
|\nabla\chi(x)|\leq \frac{2}{\varrho}
\quad {\rm and } \quad
|\nabla^2\chi(x)|\leq \frac{4}{ \varrho^2},
\end{align}
for  any $ h=1,\cdots,k$.
We define
\begin{equation}\label{sarj}
(\tilde{\Phi}_m(y),\tilde{\Psi}_m(y))
=\Big(\mu_{lm}^{-\frac{N}{q+1}}
(\chi\Phi_m)(\mu_{lm}y+\xi_{lm}),
\mu_{lm}^{-\frac{N}{p+1}}
(\chi\Psi_m)(\mu_{lm}y+\xi_{lm})\Big)\quad\mbox{for}\ y\in \mathbb{R}^N.
\end{equation}
It follows that
\begin{eqnarray}\label{ulae}
\left\{ \arraycolsep=1.5pt
   \begin{array}{lll}
\Delta \tilde{\Phi}_m(y)
= \mu_{lm}^{\frac{pN}{p+1}}
\bigg[\Big(\chi\Delta \Phi_m
+2\nabla\chi\nabla\Phi_m
+\Phi_m\Delta\chi
\Big)(\mu_{lm}y+\xi_{lm})\quad\mbox{for}\ y\in \mathbb{R}^N, \\[2mm]
\Delta \tilde{\Psi}_m(y)
= \mu_{lm}^{\frac{qN}{q+1}}
\bigg[\Big(\chi\Delta \Psi_m
+2\nabla\chi\nabla\Psi_m
+\Psi_m\Delta\chi
\Big)(\mu_{lm}y+\xi_{lm})\quad\mbox{for}\ y\in \mathbb{R}^N. \\[2mm]
\end{array}
\right.
\end{eqnarray}
Then
\begin{equation}\label{ayfe}
\|\Delta\tilde{\Phi}_m(y)\|_{L^{\frac{p+1}{p}}(\Omega)}
+
\|\Delta\tilde{\Psi}_m(y)\|_{L^{\frac{q+1}{q}}(\Omega)}
\leq C.
\end{equation}

 Next, we prove that
\begin{equation}\label{uua}
(\tilde{\Phi}_m,\tilde{\Psi}_m) \rightarrow (0,0)
 \quad
 \mbox{weakly\ in}\ \dot{W}^{2,\frac{p+1}{p}}(\mathbb{R}^N)
\times
 \dot{W}^{2,\frac{q+1}{q}}(\mathbb{R}^N).
 \end{equation}
From (\ref{ayfe}), we obtain
\[
(\tilde{\Phi}_m,\tilde{\Psi}_m) \rightarrow (\tilde{\Phi}_\infty,\tilde{\Psi}_\infty)
\quad \mbox{weakly\ in}\ \dot{W}^{2,\frac{p+1}{p}}(\mathbb{R}^N)
\times
 \dot{W}^{2,\frac{q+1}{q}}(\mathbb{R}^N).
\]
For any $(\Theta_1,\Theta_2)\in C_c^\infty(\mathbb{R}^N)\times C_c^\infty(\mathbb{R}^N)$,
by (\ref{linear}) and (\ref{ulae}), it holds
\begin{align} \label{linearn}
 & \int_\Omega \Big[ \nabla \tilde{\Psi}_m \nabla\Theta_2
+\nabla\tilde{\Phi}_m \nabla\Theta_1\Big]dx \nonumber\\
= & \int_\Omega\Bigg[
\mu_{lm}^{\frac{pN}{p+1}}
 \Big(\chi\Delta \Psi_m
+2\nabla\chi\nabla\Psi_m
+\Psi_m\Delta\chi
\Big)\Theta_2
 +
\mu_{lm}^{\frac{qN}{q+1}}
 \Big(\chi\Delta \Phi_m
+2\nabla\chi\nabla\Phi_m
+\Phi_m\Delta\chi
\Big)\Theta_1 \bigg]dx \nonumber\\
 & +
\int_\Omega\Big[ g^{'}_{\epsilon_m} (\mathbf{P}U_m)
\Psi_m \Theta_2
+
f^{'}_{\epsilon_m}\Big(\sum_{i=1}^{k}PV_{im}\Big)
\Phi_m\Theta_1\Big]dx.
\end{align}
Using (\ref{eqda}), we get
\begin{align*}
 \int_\Omega\Bigg[
\mu_{lm}^{\frac{pN}{p+1}}
 \Big(\chi\Delta \Psi_m
+2\nabla\chi\nabla\Psi_m
+\Psi_m\Delta\chi
\Big)\Theta_2
 +
\mu_{lm}^{\frac{qN}{q+1}}
 \Big(\chi\Delta \Phi_m
+2\nabla\chi\nabla\Phi_m
+\Phi_m\Delta\chi
\Big)\Theta_1 \bigg]dx=o(1).
\end{align*}
On the other hand,
since
\begin{align*}
&  g^{'}_{\epsilon_m} (\mathbf{P}U_m\Big(\mu_{lm}y+\xi_{lm})\Big)
  +
   f^{'}_{\epsilon_m}
  \Big(\sum\limits_{i=1}^{k}PV_{im}
 (\mu_{lm}y+\xi_{lm})\Big)\nonumber\\
  = &  g_\epsilon^{'}\Big( U_{ im}(\mu_{lm}y+\xi_{lm})
  +
  \sum\limits_{i=1, j\neq i}^kU_{jm}(\mu_{lm}y+\xi_{lm})\Big)\nonumber\\
 & +
  f_\epsilon^{'}\Big( PV_{ im}(\mu_{lm}y+\xi_{lm})
  +
  \sum\limits_{i=1, j\neq i}^kPV_{jm}(\mu_{lm}y+\xi_{lm})\Big) \nonumber\\
  = &  g_\epsilon^{'}\Big((\mu_m)^{-\frac{N}{q+1}} U_{1,0} (y)++o(1)\Big)
  +
  f_\epsilon^{'}\Big((\mu_m)^{-\frac{N}{p+1}} V_{1,0} (y)++o(1)\Big).
\end{align*}
Let $\mbox{supp} \Theta
=\min\{\mbox{supp}\Theta_1,\mbox{supp}\Theta_2\}$,
then, by Lebesgue's dominated convergence theorem,
we deduce
\begin{align}\label{eqsdfsa}
& \lim\limits_{m\rightarrow \infty}
 \bigg[\mu_{lm}^{\frac{pN}{p+1}}
 \int_{\mbox{supp}\Theta_1}
 f^{'}_{\epsilon_m}\Big(\sum\limits_{i=1}^{k}PV_{im}
 (\mu_{lm}y+\xi_{lm})\Big)
\Phi_m(y)\Theta_1(y)dy\nonumber\\
& +
\lim\limits_{m\rightarrow \infty}
 \mu_{lm}^{\frac{qN}{q+1}}
 \int_{\mbox{supp}\Theta_2}
 g^{'}_{\epsilon_m} \Big(\mathbf{P}U_m(\mu_{lm}y+\xi_{lm})\Big)
\Psi_m (y)\Theta_2(y)dy\nonumber\\
= &
\int_{\mbox{supp}\Theta}
\Big[f^{'}_0(V_{1,0})\tilde{\Phi}_\infty\Theta_1
+ g^{'}_0(U_{1,0})\tilde{\Psi}_\infty\Theta_2 \Big]dy,
\end{align}
for each $\Theta\in C_c^\infty(\mathbb{R}^N)$.
From (\ref{linearn})-(\ref{eqsdfsa}), we conclude that $(\tilde{\Psi}_\infty,\tilde{\Phi}_\infty)$
  is a weak solution of
 \begin{equation}\label{linebar-equ}
\begin{cases}
-\Delta\tilde{\Psi}_\infty
=f_0^{'}(V_{1,0})\tilde{\Phi}_\infty \quad & \mbox{in} \ \mathbb{R}^N,\\
-\Delta\tilde{\Phi}_\infty
=g_0^{'}(U_{1,0})\tilde{\Psi}_\infty \quad & \mbox{in} \ \mathbb{R}^N,\\
(\tilde{\Psi}_\infty,\tilde{\Phi}_\infty)\in \dot{W}^{2,\frac{p+1}{p}}(\mathbb{R}^N)
\times
 \dot{W}^{2,\frac{q+1}{q}}(\mathbb{R}^N),
\end{cases}
\end{equation}
 and satisfies the following  orthogonality condition
\begin{align}
 &\int_{\mathbb{R}^N} \left(f^{'}_0(V_{1,0})\Phi_{1,0}^l\tilde{\Phi}_\infty
+g^{'}_0(U_{1,0})\Psi_{1,0}^l\tilde{\Psi}_\infty\right)dx\nonumber\\
= &  \lim\limits_{m\rightarrow \infty}
\int_{\mathbb{R}^N} \left(f^{'}_0(V_{1,0})\Phi_{1,0}^l\tilde{\Phi}_m
+g^{'}_0(U_{1,0})\Psi_{1,0}^l\tilde{\Psi}_m\right)dx\nonumber \\
= &  \lim\limits_{m\rightarrow \infty}
\int_{\mathbb{R}^N} \left(f^{'}_0(V_{1,0})\Phi_{1,0}^l\tilde{\Phi}_m
+g^{'}_0(U_{1,0})\Psi_{1,0}^l\tilde{\Psi}_m\right)dx \nonumber\\
= & \lim\limits_{m\rightarrow \infty}
\int_{B(0,\frac{3\varrho}{\mu_{lm}})\setminus
B(0,\frac{2\varrho}{\mu_{lm}})} \bigg(f^{'}_0(V_{1,0})\Phi_{1,0}^l\cdot
(\mu_{lm})^{\frac{N}{p+1}}\{(\chi-1)\Phi_m\}
(\mu_{lm}y+\xi_{lm})\nonumber\\
& \quad+ g^{'}_0(U_{1,0})\Psi_{1,0}^l\cdot
(\mu_{lm})^{\frac{N}{p+1}}\{(\chi-1)\Psi_m\}
(\mu_{lm}y+\xi_{lm})\bigg)dx \nonumber\\
= & \lim\limits_{m\rightarrow \infty}
O\Big((\mu_{m})^{[(N-2)p-2]\frac{p}{p+1}}
+(\mu_{m})^{\frac{Npq}{q+1}}\Big)
=0,
\end{align}
for $  l=0,\cdots,N$.
Further,
since $(\Phi_{1,0}^l,\Psi_{1,0}^l)\in E_{\mathbf{d},\boldsymbol{\xi}}^\perp$,
we deduce that
$
(\tilde{\Psi}_\infty,\tilde{\Phi}_\infty)=(0,0).
$
We obtain (\ref{uua}).

\textbf{Step 3.}
Let us prove that a contradiction arises.
First,  using Lemmas \ref{yy} and  \ref{sumbu2}, we obtain
\begin{align}\label{eqsda}
&\bigg|f^{'}_{\epsilon_m}\Big(\sum\limits_{i=1}^{k}PV_{im}
  \Big)\Phi_m\bigg|_{L^{\frac{p+1}{p}}(\Omega)}\nonumber\\
\leq & C \bigg|\Big[f^{'}_{\epsilon_m}
\Big(\sum\limits_{i=1}^{k}PV_{im}
  \Big)
  -f^{'}_0\Big(\sum\limits_{i=1}^{k}PV_{im}
  \Big)
  \Big]\Phi_m\bigg|_{L^{\frac{p+1}{p}}(\Omega)}\nonumber\\
 & +
  C \bigg|\Big[f^{'}_0\Big(\sum\limits_{i=1}^{k}PV_{im}
  \Big)
  -\sum\limits_{i=1}^{k}f^{'}_0(PV_{im})\Big]
  \Phi_m\bigg|_{L^{\frac{p+1}{p}}(\Omega)}
   + C  \sum\limits_{i=1}^{k}\Big|f^{'}_0(PV_{im})
  \Phi_m\Big|_{L^{\frac{p+1}{p}}(\Omega)}\nonumber\\
  \leq & C\bigg|f^{'}_{\epsilon_m}\Big(\sum_{i=1}^{k}PV_{im}\Big) -f^{'}_0\Big(\sum_{i=1}^{k}PV_{im}\Big) \bigg|_{L^{\frac{p+1}{p-1}}(\Omega)}
  |\Phi_m|_{L^{p+1}(\Omega)}\nonumber\\
& + C\bigg|f^{'}_0\Big(\sum_{i=1}^{k}PV_{im}\Big)
 -\sum_{i=1}^{k}f^{'}_0(PV_{im})\Big) \bigg|_{L^{\frac{p+1}{p-1}}(\Omega)}
 |\Phi_m|_{L^{p+1}(\Omega)}
 \nonumber\\
& + C  \sum\limits_{i=1}^{k}|f^{'}_0(PV_{im})
   |_{L^{\frac{p+1}{p-1}}(\Omega)}
   |\Phi_m|_{L^{p+1}(\Omega)}\nonumber\\
  \leq &
  O \Big(\mu_m^{(N-2)p}+\epsilon_m
  \mu_m^{\frac{Np}{p+1}}
   (\ln|\ln\mu_m|)\Big)
   \rightarrow0 \quad \mbox{as}\ m\rightarrow\infty.
\end{align}
Using the estimate  (\ref{pfs1}), a direct computation   yields that
\begin{align}\label{uaek}
&|g^{'}_{\epsilon_m} (\mathbf{P}U_m)
\Psi_m|_{L^{\frac{q+1}{q}}(\Omega)}\nonumber\\
 = &  |g^{'}_{\epsilon_m} (\mathbf{P}U_m)
\Big|_{L^{\frac{q+1}{q}}(\Omega)}
 |\Psi_m|_{L^{q+1}(\Omega)}\nonumber\\
 = &\Big|g^{'}_{\epsilon_m}(\mathbf{P}U_m)
 -  g^{'}_0(\mathbf{P}U_m) \Big|_{L^{\frac{q+1}{q-1}}(\Omega)}
 |\Psi_m|_{L^{q+1}(\Omega)}\nonumber\\
& +\Big|g^{'}_0(\mathbf{P}U_m)
 -  \sum_{i=1}^{k}g^{'}_0(U_{im})
 \Big|_{L^{\frac{q+1}{q-1}}(\Omega)}
 |\Psi_m|_{L^{q+1}(\Omega)}
 + \sum_{i=1}^{k}|g^{'}_0(U_{im})
 |_{L^{\frac{q+1}{q-1}}(\Omega)}
 |\Psi_m|_{L^{q+1}(\Omega)}\nonumber\\
  = & O \Big(\epsilon_m (\ln|\ln\mu_m|)+\mu_m^{\frac{Np}{q+1}}\Big)
  \rightarrow0 \quad \mbox{as}\ m\rightarrow\infty.
\end{align}

In fact, (\ref{linear})  can be write as
\[
\begin{cases}
-\Delta \Psi_m=
f^{'}_{\epsilon_m}\Big(\sum\limits_{i=1}^{k}PV_{im}\Big)
\Phi_m
-\Delta H_{1m}+\sum\limits_{i=1}^k
\sum\limits_{l=0}^Nc_{il,m} f^{'}_0(V_{im})\Phi^l_{im}
\ \ &\text{in}~\Omega,\\
-\Delta \Phi_m=  g^{'}_{\epsilon_m} (\mathbf{P}U_m)
\Psi_m
-\Delta H_{2m}+\sum\limits_{i=1}^k\sum\limits_{l=0}^Nc_{il,m} g^{'}_0(U_{im}) \Psi^l_{im}
\ \ &\text{in}~\Omega,\\
\Psi_m=\Phi_m=0\ \ &\text{on}~\partial\Omega.
\end{cases}
\]
Then, by (\ref{aye}),  (\ref{lfimi}), (\ref{eqsda}) and (\ref{uaek}), it holds
\begin{align*}
  1= & \|(\Phi_m,\Psi_m)\|  \\
  \leq &  C \Bigg[\bigg|
  f^{'}_{\epsilon_m}\Big(\sum\limits_{i=1}^{k}PV_{im}
  \Big)\Phi_m\bigg|_{L^{\frac{p+1}{p}}(\Omega)}
+\Big|g^{'}_{\epsilon_m} (\mathbf{P}U_m)\Psi_m\Big|_{L^{\frac{q+1}{q}}(\Omega)}
+\|(H_{1m},H_{2m})\|  \\
& +\sum\limits_{i=1}^k\sum\limits_{l=0}^Nc_{il,m}
\bigg(\Big|f^{'}_0(V_{im})
\Phi^l_{im}\Big|_{L^{\frac{p+1}{p}}(\Omega)}
+\Big|g^{'}_0(U_{im}) \Psi^l_{im}\Big|_{L^{\frac{q+1}{q}}(\Omega)}
\bigg)\Bigg]\\
\leq &  C \Bigg[\bigg|
  f^{'}_{\epsilon_m}\Big(\sum\limits_{i=1}^{k}PV_{im}
  \Big)\Phi_m\bigg|_{L^{\frac{p+1}{p}}(\Omega)}
+\Big|g^{'}_{\epsilon_m} (\mathbf{P}U_m)\Psi_m\Big|_{L^{\frac{q+1}{q}}(\Omega)}
+\|(H_{1m},H_{2m})\|  \\
 &  +\sum\limits_{i=1}^k\sum\limits_{l=0}^N c_{il,m}
\bigg(\Big| f_0^{'}(V_{1,0})\Phi^l_{1,0}
\Big|_{L^{\frac{p+1}{p}}(\Omega)}
+\Big|g^{'}_0(U_{1,0})  \Psi^l_{1,0}\Big|_{L^{\frac{q+1}{q}}(\Omega)}
+o(1)\bigg)\Bigg]\\
\leq & C\Bigg( \bigg|
  f^{'}_{\epsilon_m}\Big(\sum\limits_{i=1}^{k}PV_{im}
  \Big)\Phi_m\bigg|_{L^{\frac{p+1}{p}}(\Omega)}
+\Big|g^{'}_{\epsilon_m} (\mathbf{P}U_m)
\Psi_m\Big|_{L^{\frac{q+1}{q}}(\Omega)}\Bigg)\rightarrow0 \quad \mbox{as}\ m\rightarrow\infty.
\end{align*}
This is a contradiction.  Finally, we get the desired result.
\end{proof}

By using the invertibility of the operator  $L_{ \mathbf{d},\boldsymbol\xi}$,
we are in position to solve equation (\ref{ng2}).

\noindent{\it \textbf{Proof of Proposition \ref{pro21}}}:
First of all, we point out that $(\Phi,\Psi )$
solves equation  (\ref{ng2}) if and only if $(\Phi,\Psi )$ is   a fixed point of the map  $T_{ \mathbf{d},\boldsymbol\xi}: E^{\bot}_{ \mathbf{d},\boldsymbol\xi}\rightarrow E^{\bot}_{ \mathbf{d},\boldsymbol\xi}$ defined by
\begin{align*}
  T_{ \mathbf{d},\boldsymbol\xi}
  (\Phi,\Psi )
  = & L_{ \mathbf{d},\boldsymbol\xi}^{-1} \Pi^{\bot}_{ \mathbf{d},\boldsymbol\xi}\mathcal{I}^*
  \Bigg\{
     - \int_\Omega \bigg[f_\epsilon\Big(\sum_{i=1}^{k}PV_{i}
 +\Phi \Big)
 - f_\epsilon\Big(\sum_{i=1}^{k}PV_{i}\Big)
 -f^{'}_\epsilon\Big(\sum_{i=1}^{k}PV_{i}\Big)
 \Phi \bigg] dx \\
 & - \int_\Omega \bigg[f_\epsilon\Big(\sum_{i=1}^{k}PV_{i}\Big)
 -f_0\Big(\sum_{i=1}^{k}PV_{i}\Big)\bigg] dx\\
  & - \int_\Omega \bigg[
 f_0\Big(\sum_{i=1}^{k}PV_{i}\Big)
 -\sum_{i=1}^{k}f_0(PV_{i})\bigg]dx
  - \sum_{i=1}^{k}\int_\Omega \Big[
  f_0(PV_{i})-f_0(V_{i})\Big] dx \\
  & - \int_\Omega \bigg[f^{'}_\epsilon\Big(\sum_{i=1}^{k}PV_{i}\Big)
 -f^{'}_0\Big(\sum_{i=1}^{k}PV_{i}\Big)\bigg]
 \Phi  dx\\
  & - \int_\Omega \bigg[
 f^{'}_0\Big(\sum_{i=1}^{k}PV_{i}\Big)
 -\sum_{i=1}^{k}f^{'}_0(PV_{i})\bigg]
 \Phi dx
  - \sum_{i=1}^{k}\int_\Omega \Big[
  f^{'}_0(PV_{i})-f^{'}_0(V_{i})\Big]
  \Phi dx \\
&  - \int_\Omega \Big[g_\epsilon
(\mathbf{P}U_{\mathbf{d},\boldsymbol\xi}
    +\Psi )
 - g_\epsilon(\mathbf{P}U_{\mathbf{d},\boldsymbol\xi})
 -g^{'}_\epsilon
 (\mathbf{P}U_{\mathbf{d},\boldsymbol\xi})
 \Psi  \Big] dx \\
  &
  - \int_\Omega \Big[ g_\epsilon
  (\mathbf{P}U_{\mathbf{d},\boldsymbol\xi})
 -g_0(\mathbf{P}U_{\mathbf{d},\boldsymbol\xi}) \Big] dx
   - \int_\Omega \Big[ g_0(\mathbf{P}U_{\mathbf{d},\boldsymbol\xi})
 - \sum_{i=1}^{k}g_0(U_i) \Big] dx\\
&  - \int_\Omega \Big[ g^{'}_\epsilon
  (\mathbf{P}U_{\mathbf{d},\boldsymbol\xi})
 -  \sum_{i=1}^{k}g^{'}_0(U_i) \Big] \Psi dx\Bigg\}.
\end{align*}
Let
\[
\tilde{B }
  =\{(\Phi ,
  \Psi ) \in E^{\bot}_{ \mathbf{d},\boldsymbol\xi}:
\|(\Phi ,
  \Psi )\|\leq C^*R_\epsilon\},
\]
where $R_\epsilon = \epsilon
   (\ln|\ln\mu|)[\mu^{\frac{Np}{p+1}}+\mu^{\frac{Np}{q+1}}]
   +\mu^{\frac{Np}{q+1}}+\mu^{\frac{N(p-1)}{p+1}}$.
   We will show that
$
T_{ \mathbf{d},\boldsymbol\xi}:
\tilde{B }\rightarrow \tilde{B }
$
is a contraction mapping.

From  Lemma \ref{inver},  (\ref{embed}) and (\ref{esmbed}), we have
\begin{align*}
  \|T_{\mathbf{d},\boldsymbol\xi}
  (\Phi ,
  \Psi )\|
  \leq &  C \left| f_\epsilon\Big(\sum_{i=1}^{k}PV_{i}
 +\Phi \Big)
 - f_\epsilon\Big(\sum_{i=1}^{k}PV_{i}\Big)
 -f^{'}_\epsilon\Big(\sum_{i=1}^{k}PV_{i}\Big)
 \Phi    \right|_{L^{\frac{p+1}{p}}(\Omega)} \\
 & + C \bigg|f_\epsilon\Big(\sum_{i=1}^{k}PV_{i}\Big)
 -f_0\Big(\sum_{i=1}^{k}PV_{i}\Big)
 \bigg|_{L^{\frac{p+1}{p}}(\Omega)}\\
  & +\bigg|
 f_0\Big(\sum_{i=1}^{k}PV_{i}\Big)
 -\sum_{i=1}^{k}
 f_0(PV_{i})\bigg|_{L^{\frac{p+1}{p}}(\Omega)}
 + \sum_{i=1}^{k} \Big|
  f_0(PV_{i})-f_0(V_{i}) \Big|_{L^{\frac{p+1}{p}}(\Omega)} \\
  & +  \bigg|\Big[f^{'}_\epsilon
  \Big(\sum_{i=1}^{k}PV_{i}\Big)
 -f^{'}_0\Big(\sum_{i=1}^{k}PV_{i}\Big)\Big]
 \Phi  \bigg|_{L^{\frac{p+1}{p}}(\Omega)}\\
  & +  \bigg|\Big[
 f^{'}_0\Big(\sum_{i=1}^{k}PV_{i}\Big)
 -\sum_{i=1}^{k}f^{'}_0(PV_{i})\Big]
 \Phi \Big|_{L^{\frac{p+1}{p}}(\Omega)}
  - \sum_{i=1}^{k}
  \Big| [f^{'}_0(PV_{i})-f^{'}_0(V_{i})]
  \Phi
   \Big|_{L^{\frac{p+1}{p}}(\Omega)} \\
&  +  \Big|g_\epsilon
(\mathbf{P}U_{\mathbf{d},\boldsymbol\xi}
    +\Psi )
 -g_\epsilon
 (\mathbf{P}U_{\mathbf{d},\boldsymbol\xi})
 -g^{'}_\epsilon
 (\mathbf{P}U_{\mathbf{d},\boldsymbol\xi})
 \Psi  \Big|_{L^{\frac{q+1}{q}}(\Omega)}  \\
  &
  +  \Big| g_\epsilon
  (\mathbf{P}U_{\mathbf{d},\boldsymbol\xi})
 -g_0(\mathbf{P}U_{\mathbf{d},\boldsymbol\xi}) \Big|_{L^{\frac{q+1}{q}}(\Omega)}
  + \Big|g_0(\mathbf{P}U_{\mathbf{d},\boldsymbol\xi})
  -  \sum_{i=1}^{k}g_0(U_i) \bigg|_{L^{\frac{q+1}{q}}(\Omega)}\\
  & + \bigg|\Big[ g^{'}_\epsilon
  (\mathbf{P}U_{\mathbf{d},\boldsymbol\xi})
  -  \sum_{i=1}^{k}g^{'}_0(U_i) \Big] \Psi
 \bigg|_{L^{\frac{q+1}{q}}(\Omega)} \\
 = &   H_1+\cdots+H_{11}.
\end{align*}
We will estimate $H_1$-$H_{12}$ respectively.

\emph{Estimate of  $ H_1$}:
From  the mean value theorem, we choose $t=t(x)\in [0,1]$, then
\begin{align}\label{dkoa}
  H_1 = & \bigg|f_\epsilon\Big(\sum_{i=1}^{k}PV_{i}
 +\Phi \Big)
 -f_\epsilon\Big(\sum_{i=1}^{k}PV_{i}
\Big) -f^{'}_\epsilon\Big(\sum_{i=1}^{k}PV_{i}\Big) \Phi \bigg|_{L^{\frac{p+1}{p}}(\Omega)}\nonumber\\
   =  &  \bigg|\bigg[f^{'}_\epsilon\Big(\sum_{i=1}^{k}PV_{i}
 +t\Phi \Big)-f^{'}_\epsilon\Big(\sum_{i=1}^{k}PV_{i}\Big) \bigg]\Phi \bigg|_{L^{\frac{p+1}{p}}(\Omega)}.
\end{align}
When $n\leq 6$,  Lemma \ref{zsj}   follows that
\begin{align*}
  H_1 \leq
  & C\Big(  \Big||\Phi |^p\Big|_{L^{\frac{p+1}{p}}(\Omega)}
  +\Big|\Big(\sum_{i=1}^{k}PV_{i}\Big)^{p-2}
  \Phi ^2\Big|_{L^{\frac{p+1}{p}}(\Omega)} \Big)\\
   \leq  &  C \Big( |\Phi |^{p-2}_{L^{p+1}{(\Omega)}}
   +\Big|\sum_{i=1}^{k}PV_{i}\Big|_{L^{p+1}{(\Omega)}}^{p-2} \Big)|\Phi |^2_{L^{p+1}{(\Omega)}}
   =   C ( \|\Phi \|^{p-2}+1)\|\Phi \|^2.
\end{align*}
When $n>6$, there holds
\begin{align*}
  H_1 \leq & C\Big( \Big||\Phi |^p\Big|_{L^{\frac{p+1}{p}}(\Omega)}
  +\epsilon\Big|\Big(\sum_{i=1}^{k}PV_{i}\Big)^{p-1}
  \Phi  \Big|_{L^{\frac{p+1}{p}}(\Omega)} \Big)\\
   = & C \Big[ |\Phi |^p_{L^{p+1}{(\Omega)}}
   +\epsilon\bigg(\int_\Omega
   \Big[ \Big(\sum_{i=1}^{k}PV_{i}\Big)^{p-1}
   |\Phi |
   \Big]^{\frac{p+1}{p}} dx\bigg)
   ^{^{\frac{p}{p+1}}}\Big]\nonumber\\
   \leq  & C \Big( |\Phi |^p
   _{L^{p+1}{(\Omega)}}
   +\epsilon\Big|\sum_{i=1}^{k}PV_{i}\Big|^{p-1}_{L^{p+1}{(\Omega)}} |\Phi |_{L^{p+1}
   {(\Omega)}} \Big)\\
   \leq  &  C \Big( |\Phi |^{p-1}_{L^{p+1}{(\Omega)}}
   +\epsilon\sum_{i=1}^{k}|PV_{i}|^{p-1}_{L^{p+1}{(\Omega)}}  \Big)|\Phi |_{L^{p+1}{(\Omega)}}\\
   \leq & C ( \|\Phi \|^{p-1} +\epsilon  )\|\Phi \|.
\end{align*}
Sum up  these estimates, we have
\begin{eqnarray}\label{dt}
H_1 \leq
\left\{ \arraycolsep=1.5pt
  \begin{array}{lll}
C  ( \|\Phi \|^{p-2}+1 )\|\Phi \|^2 \ \   &{\rm if}\  3\leq N\leq 6, \\[2mm]
%C \|\Phi \|^2 \ \   &{\rm if}\   n=6, \\[2mm]
C  ( \|\Phi \|^{p-1} +\epsilon  )\|\Phi \|\ \   &{\rm if}\  N>6. \\[2mm]
\end{array}
\right.
\end{eqnarray}

\emph{Estimate of  $ H_2$}:
From (\ref{onran}), we get
\begin{align*}
 H_2= \bigg|f_\epsilon\Big(\sum_{i=1}^{k}PV_{i}\Big)
 -f_0\Big(\sum_{i=1}^{k}PV_{i}\Big)
 \bigg|_{L^{\frac{p+1}{p}}(\Omega)}
 = O \Big(\epsilon\mu^{\frac{Np}{p+1}}
   (\ln|\ln\mu|)\Big).
\end{align*}

\emph{Estimate of  $ H_3$}:
The    Lemma \ref{ysy} allows us to deduce
\[
\bigg|
 f_0\Big(\sum_{i=1}^{k}PV_{i}\Big)
 -\sum_{i=1}^{k}
 f_0(PV_{i})\bigg|_{L^{\frac{p+1}{p}}(\Omega)}
 =O (\mu^{(N-2)p}).
\]

\emph{Estimate of  $ H_4$}:
By (\ref{sumbu3}), there holds
 \begin{align*}\label{uio}
   H_4=   \Big|
  f_0(PV_{i})-f_0(V_{i}) \Big|_{L^{\frac{p+1}{p}}(\Omega)}
%  = & \Big(\int_\Omega \Big|
%  f_0(PV_{i})-f_0(V_{i}) \Big|^{ \frac{p+1}{p}}dx\Big)^{\frac{p}{p+1}}\nonumber\\
%  = & \left(\int_\Omega \bigg|
%   \bigg(V_i(x)-\left(\frac{b_{N,p}}{\gamma_N}\right)
% \mu_i^{\frac{N}{q+1}}H(x,\xi_i)
%+o(\mu^{\frac{N}{q+1}})\bigg)^p- V^p_{i}  \bigg|^{ \frac{p+1}{p}}dx\right)^{\frac{p}{p+1}}\nonumber\\
  =O(\mu^{\frac{N}{q+1}}).
 \end{align*}
 
\emph{Estimate of  $ H_5$}:
Indeed, using   (\ref{fepli2}), we get
\begin{align*}
H_5= &  \bigg|\Big[f^{'}_\epsilon
  \Big(\sum_{i=1}^{k}PV_{i}\Big)
 -f^{'}_0\Big(\sum_{i=1}^{k}PV_{i}\Big)\Big]
 \Phi  \bigg|_{L^{\frac{p+1}{p}}(\Omega)}\\
 \leq &   \Big|f_\epsilon^{'}\Big(\sum_{i=1}^{k}PV_{i}\Big)
  - f_0^{'}\Big(\sum_{i=1}^{k}PV_{i}\Big)\Big|
  _{L^{\frac{p+1}{p-1}}(\Omega)}
  |\Phi  |_{L^{p+1}{(\Omega)}}
  = O \Big(\epsilon\mu^{\frac{Np}{p+1}}
   (\ln|\ln\mu|)\Big).
\end{align*}

\emph{Estimate of  $ H_6$}:
In view of  Lemma \ref{sumbu2},  we conclude
\begin{align*}
  H_6= & \bigg|\Big[
 f^{'}_0\Big(\sum_{i=1}^{k}PV_{i}\Big)
 -\sum_{i=1}^{k}f^{'}_0(PV_{i})\Big]
 \Phi \bigg| \\
 \leq &   \Big|f^{'}_0\Big(\sum_{i=1}^{k}PV_{i}\Big)
 -\sum_{i=1}^{k}f^{'}_0(PV_{i})\Big|
  _{L^{\frac{p+1}{p-1}}(\Omega)}
  |\Phi  |_{L^{p+1}{(\Omega)}}
  = O \Big(\mu^{(N-2)p}
   (\ln|\ln\mu|)\Big).
\end{align*}

\emph{Estimate of  $ H_7$}:
From (\ref{sumfbu2}),
one has
\begin{align*}
 H_7= &  \Big| [f^{'}_0(PV_{i})-f^{'}_0(V_{i})]
  \Phi
   \Big|_{L^{\frac{p+1}{p}}(\Omega)}\\
 \leq &   \Big|f^{'}_0(PV_{i})-f^{'}_0(V_{i})\Big|
  _{L^{\frac{p+1}{p-1}}(\Omega)}
  |\Phi  |_{L^{p+1}{(\Omega)}}
  = O \Big(\mu^{(N-2)p}
   (\ln|\ln\mu|)\Big).
\end{align*}

\emph{Estimate of  $ H_8$}:
By similar calculation of $ H_1$, we have
\begin{eqnarray}\label{dt}
 H_8&=& \Big|g_\epsilon
(\mathbf{P}U_{\mathbf{d},\boldsymbol\xi}
    +\Psi )
 -g_\epsilon
 (\mathbf{P}U_{\mathbf{d},\boldsymbol\xi})
 -g^{'}_\epsilon
 (\mathbf{P}U_{\mathbf{d},\boldsymbol\xi})
 \Psi  \Big|_{L^{\frac{q+1}{q}}(\Omega)}\nonumber\\
&\leq &
\left\{ \arraycolsep=1.5pt
  \begin{array}{lll}
C  ( \|\Psi \|^{p-2}+1 )\|\Psi \|^2 \ \   &{\rm if}\  3\leq N\leq 6, \\[2mm]
C  ( \|\Psi \|^{p-1} +\epsilon  )\|\Psi \|\ \   &{\rm if}\  N>6. \\[2mm]
\end{array}
\right.
\end{eqnarray}

\emph{Estimate of  $ H_9$}:
From (\ref{hdua}), it holds
\begin{align*}
 H_9=  \Big| g_\epsilon
  (\mathbf{P}U_{\mathbf{d},\boldsymbol\xi})
 -g_0(\mathbf{P}U_{\mathbf{d},\boldsymbol\xi}) \Big|_{L^{\frac{q+1}{q}}(\Omega)}
 =O \Big(\epsilon\mu^{\frac{Np}{q+1}}
   (\ln|\ln\mu|)\Big).
\end{align*}

\emph{Estimate of  $ H_{10}$}:
From Lemma \ref{pfS1}, we have
\begin{align*}
 H_{10}= \Big|g_0(\mathbf{P}U_{\mathbf{d},\boldsymbol\xi})
 -\sum_{i=1}^{k}g_0(U_i)
 \Big|_{L^{\frac{q+1}{q}}(\Omega)}
  = O (\mu^{\frac{Np}{q+1}}).
\end{align*}

\emph{Estimate of  $ H_{11}$}:
From (\ref{pfs1}), we get
\begin{align*}
H_{11}=  &   \bigg|\Big[ g^{'}_\epsilon
  (\mathbf{P}U_{\mathbf{d},\boldsymbol\xi})
 -  \sum_{i=1}^{k}g^{'}_0(U_i) \Big] \Psi
 \bigg|_{L^{\frac{q+1}{q}}(\Omega)}\\
 \leq &   \Big|g^{'}_\epsilon(\mathbf{P}U_{\mathbf{d},\boldsymbol\xi})
 -  \sum_{i=1}^{k}g^{'}_0(U_i) \Big|_{L^{\frac{q+1}{q-1}}(\Omega)}
  |\Psi  |_{L^{q+1}{(\Omega)}}
  =O \Big(\epsilon \mu^{\frac{Np}{q+1}}
   (\ln|\ln\mu|)\Big).
\end{align*}
From $H_1$-$H_{11}$,
there are constants $C^*>0$ and $ \mu_0>0$ such that  for each $ \mu\in(0, \mu_0)$, we obtain
\[
\|T_{\mathbf{d},\boldsymbol\xi}
  (\Phi ,
  \Psi )\|
  \leq C^*R_\epsilon
\quad
 \mbox{for\ every}\
 (\Phi,\Psi )\in \tilde{B }.
\]
Finally, we prove that  $T_{ \mathbf{d},\boldsymbol\xi}$ is a contraction map.
If $(\Phi_1,\Psi_1)$,
$(\Phi_2,\Psi_2)\in \tilde{B }$,
by the similar computations to $H_1$-$H_{11}$,
there exists a constant $L^*\in (0,1)$ such that
\[  \|T_{ \mathbf{d},\boldsymbol\xi}(\Phi_2,\Psi_2)-T_{ \mathbf{d},\boldsymbol\xi}(\Phi_1,\Psi_1) \|
   \leq L^* \|\Phi_2-\Phi_1 \|.
\]
It follows that
$T_{ \mathbf{d},\boldsymbol\xi}$  is a contraction mapping from $\tilde{B }$ to $ \tilde{B }$,
then,
it has a unique fixed point  $(\Phi,\Psi )\in \tilde{B }$.
This concludes the proof.
\qed

\section{Proof of  Proposition \ref{leftside}}

This section is devoted to prove Proposition \ref{leftside}.

\noindent{\emph{\textbf{Proof of  Part $a$}}}.
Since this procedure is carried out in a standard way, we omit the proofs and refer to
Proposition 4.7 in \cite{kpm} for technical details.

\noindent{\emph{\textbf{Proof of  Part $b$}}}.
For $j=1,\cdots,k$ and $h=0,\cdots,N$, by multiplying with $ (P\Phi_{jh},
    P\Psi_{jh})$ on both sides of (\ref{ccf}),
 using   (\ref{oeo}) and (\ref{oeo1}), there holds
\begin{align*}
  & \bigg\langle \Big(\mathbf{P}U_{\mathbf{d},\boldsymbol\xi}
   +\Psi_{\mathbf{d},\boldsymbol\xi},
   \sum_{i=1}^{k}PV_{i}+\Phi_{\mathbf{d},\boldsymbol\xi}\Big)
-\mathcal{I}^*\bigg[\bigg(g_\epsilon (\mathbf{P}U_{\mathbf{d},\boldsymbol\xi}
    +\Psi_{\mathbf{d},\boldsymbol\xi}),
    f_\epsilon\Big(\sum_{i=1}^{k}PV_{i}
    +\Phi_{\mathbf{d},\boldsymbol\xi}\Big)\bigg)\bigg],
    (P\Phi_{jh},
    P\Psi_{jh})\bigg\rangle\\
  = &  \Big\langle \Big(\mathbf{P}U_{\mathbf{d},\boldsymbol\xi},
   \sum_{i=1}^{k}PV_{i}\Big),
   (P\Phi_{jh},
    P\Psi_{jh}) \Big\rangle
      -\int_\Omega
      \bigg[
       g_\epsilon(\mathbf{P}U_{\mathbf{d},\boldsymbol\xi}
    +\Psi_{\mathbf{d},\boldsymbol\xi})P\Psi_{jh}
    +
    f_\epsilon\Big(\sum_{i=1}^{k}PV_{i}
    +\Phi_{\mathbf{d},\boldsymbol\xi}\Big)P\Phi_{jh}
    \bigg]dx \\
  = & \int_\Omega f_0(\sum_{i=1}^{k}PV_{i})P\Phi_{jh} dx
-  \int_\Omega f_\epsilon\Big(\sum_{i=1}^{k}PV_{i}
 +\Phi_{\mathbf{d},\boldsymbol\xi}\Big)P\Phi_{jh}dx \\
  & + \sum_{i=1}^k \int_\Omega g_0(U_i)P\Psi_{jh}dx
    - \int_\Omega g_\epsilon(\mathbf{P}U_{\mathbf{d},\boldsymbol\xi}
    +\Psi_{\mathbf{d},\boldsymbol\xi})P\Psi_{jh}dx\\
  = &\mathcal{ M}_1+\mathcal{M}_2
  -   \sum_{i=1}^{k}\int_\Omega
  \Big[f^{'}_0( V_{i})\Phi_{\mathbf{d},\boldsymbol\xi} P\Phi_{jh}
  +g^{'}_0(U_i) \Psi_{\mathbf{d},\boldsymbol\xi} P\Psi_{jh} \Big] dx.
\end{align*}
Where
\begin{align*}
\mathcal{M}_1
  = &   - \int_\Omega \bigg[f_\epsilon\Big(\sum_{i=1}^{k}PV_{i}
 +\Phi_{\mathbf{d},\boldsymbol\xi}\Big)
 - f_\epsilon\Big(\sum_{i=1}^{k}PV_{i}\Big)
 -f^{'}_\epsilon\Big(\sum_{i=1}^{k}PV_{i}\Big)
 \Phi_{\mathbf{d},\boldsymbol\xi} \bigg]P\Phi_{jh} dx \\
 & - \int_\Omega \bigg[f_\epsilon\Big(\sum_{i=1}^{k}PV_{i}\Big)
 -f_0\Big(\sum_{i=1}^{k}PV_{i}\Big)\bigg]P\Phi_{jh} dx
% & - \int_\Omega \bigg[
% f_0\Big(\sum_{i=1}^{k}PV_{i}\Big)
% -\sum_{i=1}^{k}f_0(PV_{i})\bigg]P\Phi_{jh}dx
%  - \sum_{i=1}^{k}\int_\Omega \Big[
% f_0(PV_{i})-f_0(V_{i})\Big]P\Phi_{jh} dx \\
   - \int_\Omega \bigg[f^{'}_\epsilon\Big(\sum_{i=1}^{k}PV_{i}\Big)
 -f^{'}_0\Big(\sum_{i=1}^{k}PV_{i}\Big)\bigg]\Phi_{\mathbf{d},\boldsymbol\xi} P\Phi_{jh} dx \\
 & - \int_\Omega \bigg[
 f^{'}_0\Big(\sum_{i=1}^{k}PV_{i}\Big)
 -\sum_{i=1}^{k}f^{'}_0(PV_{i})\bigg]\Phi_{\mathbf{d},\boldsymbol\xi} P\Phi_{jh} dx
  - \sum_{i=1}^{k}\int_\Omega \Big[
  f^{'}_0(PV_{i})-f^{'}_0(V_{i})\Big]\Phi_{\mathbf{d},\boldsymbol\xi} P\Phi_{jh} dx \\
    = & P_1+\cdots+P_5,
\end{align*}
and
\begin{align*}
\mathcal{M}_2
  = &
   - \int_\Omega \Big[g_\epsilon(\mathbf{P}U_{\mathbf{d},\boldsymbol\xi}
    +\Psi_{\mathbf{d},\boldsymbol\xi})
 - g_\epsilon(\mathbf{P}U_{\mathbf{d},\boldsymbol\xi})
 -g^{'}_\epsilon(\mathbf{P}U_{\mathbf{d},\boldsymbol\xi})
 \Psi_{\mathbf{d},\boldsymbol\xi} \Big]P\Psi_{jh} dx \\
  & - \int_\Omega \Big[ g_\epsilon(\mathbf{P}U_{\mathbf{d},\boldsymbol\xi})
 -g_0(\mathbf{P}U_{\mathbf{d},\boldsymbol\xi}) \Big](P\Psi_{jh}-\Psi_{jh}) dx
  - \int_\Omega \Big[ g_\epsilon(\mathbf{P}U_{\mathbf{d},\boldsymbol\xi})
 -g_0(\mathbf{P}U_{\mathbf{d},\boldsymbol\xi}) \Big]\Psi_{jh} dx \\
 & - \int_\Omega \Big[ g_0(\mathbf{P}U_{\mathbf{d},\boldsymbol\xi})
 -  \sum_{i=1}^{k}g_0(U_i) \Big](P\Psi_{jh}-\Psi_{jh})dx
  - \int_\Omega \Big[ g_0(\mathbf{P}U_{\mathbf{d},\boldsymbol\xi})
 -  \sum_{i=1}^{k}g_0(U_i) \Big]\Psi_{jh} dx \\
 & - \int_\Omega \Big[ g^{'}_\epsilon(\mathbf{P}U_{\mathbf{d},\boldsymbol\xi})
 -  \sum_{i=1}^{k}g^{'}_0(U_i) \Big]\Psi_{\mathbf{d},\boldsymbol\xi} P\Psi_{jh} dx
 -  \sum_{i=1}^{k} \int_\Omega g_0(U_i)(P\Psi_{jh}-\Psi_{jh})dx\\
    = & Q_1+\cdots+Q_7.
\end{align*}
We next estimate each term as follows.

\emph{Estimate of $ P_1$}:
From (\ref{dkoa}) and (\ref{dt}), we obtain
\begin{eqnarray*}
  P_1 & = &
  \int_\Omega \Big[f_\epsilon\Big(\sum_{i=1}^{k}PV_{i}
 +\Phi_{\mathbf{d},\boldsymbol\xi}\Big)
 -f_\epsilon\Big(\sum_{i=1}^{k}PV_{i}\Big) -f^{'}_\epsilon\Big(\sum_{i=1}^{k}PV_{i}\Big) \Phi_{\mathbf{d},\boldsymbol\xi}  \Big]P\Phi_{jh}dx\\
  & = & O\bigg(\bigg|f_\epsilon\Big(\sum_{i=1}^{k}PV_{i}
 +\Phi_{\mathbf{d},\boldsymbol\xi}\Big)-f_\epsilon\Big(\sum_{i=1}^{k}PV_{i}\Big) -f^{'}_\epsilon\Big(\sum_{i=1}^{k}PV_{i}\Big) \Phi_{\mathbf{d},\boldsymbol\xi} \bigg|_{L^{\frac{p+1}{p}}(\Omega)}
   |P\Phi_{jh} |_{L^{p+1}(\Omega)}\bigg)\\
 & \leq &\left\{ \arraycolsep=1.5pt
   \begin{array}{lll}
  O(1+\|\Phi_{\mathbf{d},\boldsymbol\xi}\|^{p-2})\|\Phi_{\mathbf{d},\boldsymbol\xi}\|^2\ \   &{\rm if}\ 3\leq N\leq6, \\[2mm]
  O(\epsilon+\|\Phi_{\mathbf{d},\boldsymbol\xi}\|^{p-1})
  \|\Phi_{\mathbf{d},\boldsymbol\xi}\|\ \   &{\rm if}\   N>6, \\[2mm]
 \end{array}
 \right.\\
 &  = &
  O\Big(\mu^{(N-2)p-1}(\ln|\ln\mu|)\Big)
  =O(\mu^{(N-2)p-2}).
 \end{eqnarray*}

\emph{Estimate of $P_2$}:
By the results of Lemma \ref{pf1R},  we have
\begin{eqnarray*}
P_2&  = &
- \int_\Omega \bigg[f_\epsilon\Big(\sum_{i=1}^{k}PV_{i}\Big)
 -f_0\Big(\sum_{i=1}^{k}PV_{i}\Big)\bigg]P\Phi_{jh} dx \nonumber\\
   &  = &
\left\{ \arraycolsep=1.5pt
   \begin{array}{lll}
  -\frac{p+1}{N} \mathcal{\tilde{A}}_1
  \sum\limits_{i=1}^k \frac\epsilon{|\ln\mu_i|}
  + O\Big(\sum\limits_{i=1}^k \frac\epsilon{|\ln\mu_i|}
  +
  \epsilon\mu^{\frac{Nq}{q+1}}
   (\ln|\ln\mu|)\Big) \ \   \ & {\rm if}\  h=0, \\[2mm]
  O \Big(\epsilon\mu^{\frac{Nq}{q+1}}
   (\ln|\ln\mu|)\Big)\ \  \  & {\rm if}\  h=1,\cdots,N. \\[2mm]
\end{array}
\right.
\end{eqnarray*}

\emph{Estimate of $P_3$}:
The calculations  (\ref{gisewr2}), (\ref{phi}),  (\ref{subu2})  and  (\ref{fepli2}) assert that
 \begin{align*}
  P_3  = &
  \int_\Omega\bigg|\Big [f^{'}_\epsilon\Big(\sum_{i=1}^{k}PV_{i}\Big)
  -f^{'}_0\Big(\sum_{i=1}^{k}PV_{i}\Big)\Big] \Phi_{\mathbf{d},\boldsymbol\xi} P\Phi_{jh}\bigg|dx\\
   = & O\bigg( \bigg|f^{'}_\epsilon\Big(\sum_{i=1}^{k}PV_{i}\Big) -f^{'}_0\Big(\sum_{i=1}^{k}PV_{i}\Big) \bigg|_{L^{\frac{p+1}{p-1}}(\Omega)}
   |\Phi_{\mathbf{d},\boldsymbol\xi}|_{L^{p+1}(\Omega)}
   |P\Phi_{jh}|_{L^{p+1}(\Omega)}\bigg)\\
  = &
  O \Big(\epsilon\mu^{\frac{Np}{p+1}}
   (\ln|\ln\mu|)\|\Phi_{\mathbf{d},\boldsymbol\xi}\|\Big)
   =O(\mu^{(N-2)p-2}).
 \end{align*}

\emph{Estimate of $P_4$}:
 Using the
estimates (\ref{gisewr2}), (\ref{phi}), (\ref{subu2}) and (\ref{sumbu2}) we have
\begin{align*}
  P_4  = &
  \int_\Omega\bigg|\Big [f^{'}_0\Big(\sum_{i=1}^{k}PV_{i}\Big)
 -\sum_{i=1}^{k}f^{'}_0(PV_{i})\Big)\Big] \Phi_{\mathbf{d},\boldsymbol\xi} P\Phi_{jh} \bigg|dx\\
   = & O\bigg( \bigg|f^{'}_0\Big(\sum_{i=1}^{k}PV_{i}\Big)
 -\sum_{i=1}^{k}f^{'}_0(PV_{i})\Big) \bigg|_{L^{\frac{p+1}{p-1}}(\Omega)}
   |\Phi_{\mathbf{d},\boldsymbol\xi} |_{L^{p+1}(\Omega)}
   |P\Phi_{jh}|_{L^{p+1}(\Omega)}\bigg)\\
  = &
  O \Big(\mu^{(N-2)p-1}\|\Phi_{\mathbf{d},\boldsymbol\xi}\|\Big)
=  O(\mu^{(N-2)p-2}).
 \end{align*}

 \emph{Estimate of $P_5$}:
 We have
\begin{align}\label{aegs}
  P_5= &
  \int_\Omega\bigg|[
  f^{'}_0(PV_{i})-f^{'}_0(V_{i})]\Phi_{\mathbf{d},\boldsymbol\xi} P\Phi_{jh} \bigg|dx\nonumber\\
   = & O\bigg( \Big|[f^{'}_0(PV_{i})-f^{'}_0(V_{i})] P\Phi_{jh}
   \Big|_{L^{\frac{p+1}{p}}(\Omega)}
   |\Phi_{\mathbf{d},\boldsymbol\xi}|_{L^{p+1}(\Omega)}\bigg).
 \end{align}
 Let $B_i=B(\xi_i,r)=\{x\in\mathbb{R}^N:|x-\xi_i|<r\}$ for  $r>0$.
By Lemmas \ref{ls}-\ref{uaoe}, (\ref{bubble}),
the mean value theorem,  there exists $t=t(x)\in [0,1]$ such that
\begin{align*}
  &  \Big(\int_\Omega\Big|[f^{'}_0(PV_{i})-f^{'}_0(V_{i})] P\Phi_{jh}
   \Big|^{\frac{p+1}{p}}dx\Big)^{\frac{p}{p+1}}\\
   =  &  \Big(\int_\Omega
   \Big|[ (PV_{i})^{p-1}- V_{i}^{p-1}] P\Phi_{jh}
   \Big|^{\frac{p+1}{p}}dx\Big)^{\frac{p}{p+1}}\\
  \leq  & C\Big( \int_{B_i}
   \Big|\Big(V_i+t (PV_{i}-V_{i})\Big)^{p-2} (PV_{i}-V_{i}) P\Phi_{jh}
   \Big|^{\frac{p+1}{p}}dx\Big)^{\frac{p}{p+1}}
   +O (\mu^{(N-2)p-2})\\
   \leq  & C\Big( \int_{B_i}
   \Big|V_i^{p-2} (PV_{i}-V_{i}) \Phi_{jh}
   \Big|^{\frac{p+1}{p}}dx\Big)^{\frac{p}{p+1}}
   +O (\mu^{(N-2)p-2})\\
   \leq  & C\bigg( \int_{B_i}
   \Big|\mu_i^{-\frac{N}{p+1}(p-2)}V^{p-2}_{1,0} (\frac{x-\xi_i}{\mu_i} )
   \Big(-(\frac{b_{N,p}}{\gamma_N})
 \mu_i^{\frac{N}{q+1}}H(x,\xi_i)
 +o(\mu_i^{\frac{N}{q+1}}) \Big)
  \mu_j^{-\frac{N}{p+1}}V_{1,0} (\frac{x-\xi_j}{\mu_j} )
   \Big|^{\frac{p+1}{p}}dx\bigg)^{\frac{p}{p+1}}\\
   &+O (\mu^{(N-2)p-2})\\
   \leq  &  O(\mu^{N-2+\frac{N}{p+1}}).
 \end{align*}
 Therefore,
 \begin{equation}\label{aesgs}
  P_5 =
  O \Big(\mu^{(N-2)p-1}\|\Phi_{\mathbf{d},\boldsymbol\xi}\|\Big)
  =O(\mu^{(N-2)p-2}).
 \end{equation}

\emph{Estimate of $Q_1$}:
We argue exactly as in the proof of  $ P_1$, then
\[
Q_1=
\int_\Omega \bigg|\Big[g_\epsilon(\mathbf{P}U_{\mathbf{d},\boldsymbol\xi}
    +\Psi_{\mathbf{d},\boldsymbol\xi})
 - g_\epsilon(\mathbf{P}U_{\mathbf{d},\boldsymbol\xi})
 -g^{'}_\epsilon(\mathbf{P}U_{\mathbf{d},\boldsymbol\xi}
 \Psi_{\mathbf{d},\boldsymbol\xi} \Big]P\Psi_{jh} \bigg|dx
 =O(\mu^{(N-2)p-2}).
\]

\emph{Estimate of $Q_2$}:
The proof is similar to (\ref{mrfa}) and by (\ref{onran}), we obtain
\begin{align}\label{hdua}
Q_2= & \int_\Omega \Big[ g_\epsilon(\mathbf{P}U_{\mathbf{d},\boldsymbol\xi})
 -g_0(\mathbf{P}U_{\mathbf{d},\boldsymbol\xi}) \Big](P\Psi_{jh}-\Psi_{jh}) dx\nonumber\\
 \leq  &  C\Big| g_\epsilon(\mathbf{P}U_{\mathbf{d},\boldsymbol\xi})
 -g_0(\mathbf{P}U_{\mathbf{d},\boldsymbol\xi})\Big|_{L^{\frac{q+1}{q}}(\Omega)}
   |P\Psi_{jh}-\Psi_{jh}|_{L^{q+1}(\Omega)}
 =O \Big(\epsilon\mu^{\frac{Np}{q+1}}
   (\ln|\ln\mu|)\Big).
\end{align}

\emph{Estimate of $Q_3$}:
From Lemma \ref{pf1}, we have
\begin{eqnarray*}
Q_3 &  = &
- \int_\Omega \Big[ g_\epsilon(\mathbf{P}U_{\mathbf{d},\boldsymbol\xi})
 -g_0(\mathbf{P}U_{\mathbf{d},\boldsymbol\xi}) \Big]\Psi_{jh} dx \nonumber\\
   &  = &
\left\{ \arraycolsep=1.5pt
   \begin{array}{lll}
  -\frac{q+1}{N} \mathcal{A}_1
  \sum\limits_{i=1}^k \frac\epsilon{|\ln\mu_i|}
  + O\Big(\sum\limits_{i=1}^k \frac\epsilon{|\ln\mu_i|}
  +\epsilon\mu^{\frac{Np}{q+1}} \ln|\ln\mu|\Big)\ \   \ & {\rm if}\  h=0, \\[2mm]
  O\Big(\epsilon\mu^{\frac{Np}{q+1}} \ln|\ln\mu|\Big)\ \  \  & {\rm if}\  h=1,\cdots,N. \\[2mm]
\end{array}
\right.
\end{eqnarray*}

\emph{Estimate of $Q_4$}:
From  Lemma  \ref{pfS1} and (\ref{gisewr1}), one has
\begin{align*}
Q_4= & \int_\Omega \Big[ g_0(\mathbf{P}U_{\mathbf{d},\boldsymbol\xi})
 -  \sum_{i=1}^{k}g_0(U_i) \Big](P\Psi_{jh}-\Psi_{jh})dx\\
 \leq  &  C\Big|g_0(\mathbf{P}U_{\mathbf{d},\boldsymbol\xi})
 -  \sum_{i=1}^{k}g_0(U_i)\Big|_{L^{\frac{q+1}{q}}(\Omega)}
   |P\Psi_{jh}-\Psi_{jh}|_{L^{q+1}(\Omega)}
   =O\Big(\epsilon\mu^{\frac{Np}{q+1}}\Big).
\end{align*}

\emph{Estimate of $Q_5$}:
We show the main result in the following,
and the proof is given in   Lemma \ref{poi},
\begin{eqnarray*}
Q_5 &  = &
- \int_\Omega \Big[ g_0(\mathbf{P}U_{\mathbf{d},\boldsymbol\xi})
 -  \sum_{i=1}^{k}g_0(U_i) \Big]\Psi_{jh}dx
 \nonumber\\
   &  = &
\left\{ \arraycolsep=1.5pt
   \begin{array}{lll}
  \Big(\frac{b_{N,p}}{\gamma_N}\Big)^p \mathcal{A}_2
\mu^{\frac{N(p+1)}{q+1}}
\sum\limits_{i=1}^kd_i^{\frac{N}{q+1}}
\widetilde{H}_{\mathbf{d},\boldsymbol{\xi}}(\xi_i)\\
\quad
-a_{N,p}\mathcal{A}_4
\mu^{(N-2)p-2}
\sum\limits_{j\neq i}^k\frac{d_{i}^{\frac{2N}{q+1}}
d_{j}^{\frac{N(p-1)}{q+1}}}{|\xi_{i}-\xi_{j}|^{(N-2)p-2}}
  + O(\mu^{(N-2)p-1})
  \ \   \  {\rm if}\  h=0, \\[2mm]
 \frac{1}{2}
  \Big(\frac{b_{N,p}}{\gamma_N}\Big)^p
  \mathcal{A}_3
  \mu^{\frac{N(p+1)}{q+1}+1}
\sum\limits_{i=1}^kd_i^{\frac{N}{q+1}+1}
\partial_{\xi_{ih}}\tilde{\rho}(\xi_i)
+ O(\mu^{(N-2)p-1})
  \ \  \   {\rm if}\  h=1,\cdots,N. \\[2mm]
\end{array}
\right.
\end{eqnarray*}

\emph{Estimate of $Q_6$}:
From (\ref{gisewr1}), (\ref{phi}), (\ref{subu3}) and (\ref{pfs1}), we have
\begin{align*}
Q_6 = &
  \int_\Omega\bigg| \Big[ g^{'}_\epsilon(\mathbf{P}U_{\mathbf{d},\boldsymbol\xi})
 -  \sum_{i=1}^{k}g^{'}_0(U_i) \Big]\Psi_{\mathbf{d},\boldsymbol\xi} P\Psi_{jh}  \bigg|dx\\
   = & O\bigg( \Big|g^{'}_\epsilon(\mathbf{P}U_{\mathbf{d},\boldsymbol\xi})
 -  \sum_{i=1}^{k}g^{'}_0(U_i) \Big|_{L^{\frac{q+1}{q-1}}(\Omega)}
   |P\Psi_{jh}|_{L^{q+1}(\Omega)}
   |\Psi_{\mathbf{d},\boldsymbol\xi}|_{L^{q+1}(\Omega)}\bigg)\\
  = &  O \Big(\epsilon \mu^{\frac{Np}{q+1}}
 (\ln|\ln\mu|)\Big).
 \end{align*}

 \emph{Estimate of $Q_7$}:
In virtu of  (\ref{gisewr1}), (\ref{bubble}) and Lemma \ref{ls},  one has
\begin{align*}
Q_7 =
 \sum_{i=1}^{k} \int_\Omega g_0(U_i)(P\Psi_{jh}-\Psi_{jh})dx
 \leq     C\sum_{i=1}^{k}\Big|  g_0(U_i)\Big|_{L^{\frac{q+1}{q}}(\Omega)}
   |P\Psi_{jh}-\Psi_{jh}|_{L^{q+1}(\Omega)}
   =O\Big(\epsilon\mu^{\frac{Np}{q+1}}\Big).
\end{align*}
Together above estimates, we get the results.
\qed

\appendix

\section{ Some estimates}

In this appendix,
we collect some estimates,
which  play
an important role in the proof of our main results.
We first give a lemma concerning a precise estimate related to
%$V_{ \mu,\xi }$,
$\Phi_{jh}$ and $\Psi_{jh}$ for
$ h=0,\cdots,N$
and   $j=1,\cdots,k$.

\begin{lemma}\label{uwal}
There holds
\begin{eqnarray}\label{subu2}
 |\Phi_{jh}|_{L^{p+1}(\Omega)}=
\left\{ \arraycolsep=1.5pt
   \begin{array}{lll}
 O \Big( \mu^{N-2-\frac{N}{p+1}}\Big)\ \   &{\rm if }\  h=0, \\[2mm]
  O \Big( \mu^{N-1-\frac{N}{p+1}}\Big)
\ \  & {\rm if}\   h=1,\cdots,N,
\end{array}
\right.
\end{eqnarray}
and
\begin{eqnarray}\label{subu3}
|\Psi_{jh}|_{L^{q+1}(\Omega)}=
\left\{ \arraycolsep=1.5pt
   \begin{array}{lll}
 O \Big( \mu^{N-2-\frac{N}{q+1}}\Big)\ \   &{\rm if }\  h=0, \\[2mm]
  O \Big( \mu^{N-1-\frac{N}{q+1}}\Big)
\ \  & {\rm if}\   h=1,\cdots,N,
\end{array}
\right.
\end{eqnarray}
for  $j=1,\cdots,k$.
\end{lemma}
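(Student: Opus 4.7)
\emph{Proof plan for Lemma \ref{uwal}.}
The strategy is to exploit the explicit scaling of $(\Psi^h_{\mu,\xi}, \Phi^h_{\mu,\xi})$ given in equation (\ref{psi}) in order to reduce the computation to integrals of the fixed bubbles $(\Psi^h_{1,0}, \Phi^h_{1,0})$ over the dilated domain $(\Omega - \xi_j)/\mu_j$, and then apply the sharp decay estimates from Lemma \ref{ls} together with the critical hyperbola identity $(N-2)p - 2 = N(p+1)/(q+1)$.

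I will begin with the change of variables $y = (x - \xi_j)/\mu_j$, which gives
\[
|\Psi_{jh}|_{L^{q+1}(\Omega)}^{q+1} = \int_{\Omega} \mu_j^{-N} |\Psi^h_{1,0}((x-\xi_j)/\mu_j)|^{q+1}\,dx = \int_{(\Omega - \xi_j)/\mu_j} |\Psi^h_{1,0}(y)|^{q+1}\,dy,
\]
since the scaling factor $\mu_j^{-N}$ cancels the Jacobian $\mu_j^N$. Because $\xi_j$ is at distance $\geq \delta_2$ from $\partial\Omega$, the integration domain contains a ball of radius $\delta_2/\mu_j$ and is contained in a ball of radius $C/\mu_j$.

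Next I will invoke Lemma \ref{ls} to extract the asymptotic decay of $\Psi^h_{1,0}$ at infinity. For $h = 0$, combining $U_{1,0}(y) = a_{N,p}|y|^{-((N-2)p-2)}(1+o(1))$ with $\nabla U_{1,0}(y) = -((N-2)p - 2)a_{N,p}\, y/|y|^{(N-2)p} + O(|y|^{-\kappa_0})$, the definition $\Psi^0_{1,0} = y\cdot\nabla U_{1,0} + NU_{1,0}/(q+1)$ yields the leading decay $|\Psi^0_{1,0}(y)| \lesssim |y|^{-((N-2)p-2)}$. For $h = 1,\ldots,N$ the extra derivative gives the sharper rate $|\Psi^h_{1,0}(y)| = |\partial_h U_{1,0}(y)| \lesssim |y|^{-((N-2)p-1)}$.

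I will then split the integral into $\{|y|\leq 1\}$ and the annular region $\{1 \leq |y| \leq C/\mu_j\}$. The inner piece is a bounded constant, while on the outer piece the decay estimates combined with the hyperbola identity $((N-2)p-2)(q+1) = N(p+1)$ reduce the $h=0$ case to an integral of the form $\int_1^{C/\mu_j} r^{N-1 - N(p+1)}\,dr$; extracting the leading power in $\mu_j$ and invoking the dual identity $N - 2 - N/(q+1) = N/(p+1)$ produces the claimed $\mu^{N-2-N/(q+1)}$ scaling. The $h\geq 1$ case gains one extra power of $\mu$ from the faster decay rate. The estimate (\ref{subu2}) for $|\Phi_{jh}|_{L^{p+1}(\Omega)}$ follows from the symmetric argument, interchanging the roles of $(U_{1,0}, q+1)$ and $(V_{1,0}, p+1)$ and using $V_{1,0}(y) \sim b_{N,p}|y|^{-(N-2)}$ with the companion identity $N - 2 - N/(p+1) = N/(q+1)$. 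The main technical point is the careful bookkeeping of scaling exponents through the hyperbola identities so that the final powers of $\mu$ align with the claimed bounds.
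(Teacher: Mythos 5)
Your change of variables and the pointwise decay computation for $\Psi^h_{1,0}$ are both correct, but the argument fails at the final step. For $h=0$ the outer integral you write is $\int_1^{C/\mu_j} r^{N-1-N(p+1)}\,dr$, and the exponent $N-1-N(p+1)=-1-Np$ is strictly less than $-1$. That integral therefore \emph{converges} as $\mu_j\to0$ (to $1/(Np)$, up to a correction $O(\mu_j^{Np})$); it does not extract a power $\mu^{N-2-N/(q+1)}$ as you assert. The root cause is that the critical-hyperbola identity forces $((N-2)p-2)(q+1)=N(p+1)>N$, so $\Psi^0_{1,0}\in L^{q+1}(\mathbb{R}^N)$, and since $L^{q+1}$ is scale-invariant under the dilation in (\ref{psi}), your identity $|\Psi_{jh}|_{L^{q+1}(\Omega)}^{q+1}=\int_{(\Omega-\xi_j)/\mu_j}|\Psi^h_{1,0}(y)|^{q+1}\,dy$ simply tends to the nonzero constant $\|\Psi^h_{1,0}\|_{L^{q+1}(\mathbb{R}^N)}^{q+1}$. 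The same convergence occurs for $|\Phi_{jh}|_{L^{p+1}(\Omega)}$ whenever $p>2/(N-2)$ (in particular for every $p>1$ once $N\geq4$): the decay $\Phi^0_{1,0}\sim|y|^{-(N-2)}$ gives $(N-2)(p+1)>N$, hence a convergent integral.

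So the scheme you sketch would actually prove $|\Psi_{jh}|_{L^{q+1}(\Omega)}=O(1)$ and $|\Phi_{jh}|_{L^{p+1}(\Omega)}=O(1)$, not the positive powers of $\mu$ claimed in Lemma \ref{uwal}. The exponents $\mu^{N-2-N/(q+1)}=\mu^{N/(p+1)}$ and $\mu^{N-2-N/(p+1)}=\mu^{N/(q+1)}$ are precisely what one would get if the integrals over the dilated domain diverged; since they converge for the stated parameter range, the lemma as written appears to be wrong (at least for $N\geq4$), and no proof along these lines can reach the stated bound. Before asserting that the annular piece yields a small power of $\mu$ you must check the sign of $(N-1)-(\text{decay rate})\cdot(\text{Lebesgue exponent})+1$; here it is negative, which kills the claimed scaling.
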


\begin{lemma} \label{inerpro}
For $i$, $j=1,\cdots,k$  and  $h$, $l=0,\cdots,N$,  it holds
\begin{eqnarray*}
 \Big\langle (P\Phi_{il},P\Psi_{il}),
(P\Phi_{jh},P\Psi_{jh})\Big\rangle
=
 \left\{ \arraycolsep=1.5pt
   \begin{array}{lll}
 C_h(1+o(1))\ \   &{\rm if }  \ j=i \ {\rm and}\ l=h, \\[2mm]
 o(1)\ \  & {\rm   else}.
\end{array}
\right.
\end{eqnarray*}

\end{lemma}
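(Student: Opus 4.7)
The plan is to apply integration by parts (via the system (\ref{ssia})) to convert the gradient pairing into integrals against the concentrated sources $V_j^{p-1}\Phi_{jh}$ and $U_j^{q-1}\Psi_{jh}$, and then to perform a rescaling analysis at the concentration point $\xi_j$. Concretely, integrating by parts yields
\[
\langle (P\Phi_{il},P\Psi_{il}),(P\Phi_{jh},P\Psi_{jh})\rangle
= p\int_\Omega V_j^{p-1}\Phi_{jh}\,P\Psi_{il}\,dx
+ q\int_\Omega U_j^{q-1}\Psi_{jh}\,P\Phi_{il}\,dx.
\]
Writing $P\Psi_{il}=\Psi_{il}+(P\Psi_{il}-\Psi_{il})$ and $P\Phi_{il}=\Phi_{il}+(P\Phi_{il}-\Phi_{il})$, a H\"older estimate together with the bounds (\ref{gisewr1})--(\ref{gisewr2}) shows that the contributions of the correction terms are of strictly lower order than the principal integrals obtained after replacing the projections by the unprojected modes.

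For each principal integral I would rescale $x=\mu_j y+\xi_j$ and expand each factor via (\ref{bubble})--(\ref{psi}); the resulting $\mu_j$-powers simplify using the critical identity $\tfrac{N}{p+1}+\tfrac{N}{q+1}=N-2$. In the diagonal case $i=j$, $l=h$, testing $-\Delta\Psi_{1,0}^l=pV_{1,0}^{p-1}\Phi_{1,0}^l$ against $\Psi_{1,0}^l$ (and similarly $-\Delta\Phi_{1,0}^l=qU_{1,0}^{q-1}\Psi_{1,0}^l$ against $\Phi_{1,0}^l$), the two principal integrals collapse to explicit multiples of $\int_{\mathbb{R}^N}|\nabla\Psi_{1,0}^l|^2\,dy$ and $\int_{\mathbb{R}^N}|\nabla\Phi_{1,0}^l|^2\,dy$, carrying the corresponding $\mu_j$-scales; these quantities are nonzero by the nondegeneracy of the bubble and the explicit form (\ref{pssi}) of the kernel modes, and together define the diagonal coefficient $C_h$.

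When $i=j$ but $l\neq h$, the same rescaling leaves cross-kernel expressions proportional to $\int_{\mathbb{R}^N}V_{1,0}^{p-1}\Phi_{1,0}^h\Psi_{1,0}^l\,dy$ (equivalently $\int\nabla\Psi_{1,0}^l\cdot\nabla\Phi_{1,0}^h\,dy$), which vanish thanks to the radial symmetry of $(U_{1,0},V_{1,0})$ from \cite{aml} combined with the explicit form (\ref{pssi}): the dilation mode ($l=0$) is even in $x$ while the translation modes $(l\geq 1)$ are odd, killing any dilation--translation pairing, and for distinct translation modes $l\neq h\in\{1,\dots,N\}$ a direct computation on the radial profile of $U_{1,0},V_{1,0}$ gives orthogonality of $\partial_l U_{1,0}$ and $\partial_h V_{1,0}$ in the gradient pairing. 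When $i\neq j$, the concentrated factor at $\xi_j$ is paired with $\Psi_{il},\Phi_{il}$ centered at the well-separated point $\xi_i$; after rescaling around $\xi_j$ those functions are evaluated at arguments of modulus $\gtrsim\delta_2/\mu_i\to\infty$, and the decay estimates of Lemma \ref{ls} produce the required $o(1)$ bound, with the separation $\dist(\xi_i,\xi_j)\geq\delta_2$ built into (\ref{uwo}) being essential.

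The main obstacle is the $i=j$, $l\neq h$ case, where combining (\ref{pssi}) with the parity and rotational symmetry of $(U_{1,0},V_{1,0})$ requires some care, especially given the mismatched natural scalings of $\Psi$ and $\Phi$. A related bookkeeping point worth flagging is that for $p\neq q$ the two principal diagonal integrals carry reciprocal $\mu_j$-powers $\mu_j^{\pm(\tfrac{N}{p+1}-\tfrac{N}{q+1})}$, so the ``constant'' $C_h$ genuinely depends on $\mu$; the statement ``$C_h(1+o(1))$'' should therefore be understood as asserting that the dominant principal scale strictly overwhelms the off-diagonal error, which is precisely what is used in the proof of Theorem \ref{kisa} to conclude $c_{il}=0$.
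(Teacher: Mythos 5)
Your proposal follows the same route as the paper: integrate by parts through the equations \eqref{ssia}, peel off the projection correction $P\Phi_{jh}-\Phi_{jh}$ (resp.\ $P\Psi_{jh}-\Psi_{jh}$) and bound it by H\"older together with \eqref{gisewr1}--\eqref{gisewr2}, rescale the principal integral, and then treat the three cases ($i=j,\ l=h$; $i=j,\ l\neq h$; $i\neq j$). Your parity argument in the $i=j,\ l\neq h$ case and the separation/decay argument for $i\neq j$ are both correct and are exactly what is left implicit in the paper's one-line conclusion.

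The one point worth spelling out is the bookkeeping issue you flagged, because it traces to a genuine notational inconsistency in the paper rather than to a flaw in your reasoning. Reading \eqref{inne} literally with the lemma's ordering $\langle (P\Phi_{il},P\Psi_{il}),(P\Phi_{jh},P\Psi_{jh})\rangle$ and integrating by parts via \eqref{ssia}, one necessarily pairs $\Psi_{il}$ against $P\Phi_{jh}$ and $\Phi_{il}$ against $P\Psi_{jh}$, exactly as you did; the diagonal $i=j,\ l=h$ principal term then is $p\int V^{p-1}\Phi^l\Psi^l+q\int U^{q-1}\Psi^l\Phi^l=\int|\nabla\Psi_{il}|^2+\int|\nabla\Phi_{il}|^2$, and the two pieces rescale with the reciprocal powers $\mu_i^{\pm\big(\frac{2N}{p+1}-(N-2)\big)}$, so $C_h$ does depend on $\mu$ whenever $p\neq q$ — as you observed. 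The paper's displayed integration-by-parts step, however, reads $p\int V_i^{p-1}\Phi_{il}P\Phi_{jh}+q\int U_i^{q-1}\Psi_{il}P\Psi_{jh}$, i.e.\ $\Phi$ paired with $\Phi$ and $\Psi$ with $\Psi$; this is what one obtains if the first argument of the pairing is taken in the $(\Psi,\Phi)$ order used in the definition of $E_{\mathbf{d},\boldsymbol{\xi}}$ rather than the $(\Phi,\Psi)$ order appearing in the lemma. Under that convention the diagonal principal integrals are $p\int V_i^{p-1}(\Phi_{il})^2$ and $q\int U_i^{q-1}(\Psi_{il})^2$, both of which rescale with $\mu_i^0$ on the critical hyperbola, so $C_h$ is a genuine $\mu$-free constant; this is the version the paper actually relies on elsewhere (e.g.\ the step in the proof of Lemma \ref{inver} that invokes Lemma \ref{inerpro}). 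So your computation is sound; it simply exposes that the paper is not internally consistent about the ordering of the modes, and that only the $(\Psi,\Phi)$ convention makes the lemma statement literally true, whereas your reading requires the weaker (but still sufficient for invertibility) claim that the diagonal scale dominates the off-diagonal errors.
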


\begin{proof}
By (\ref{inne}), we have
\begin{align}\label{ssia}
\Big\langle (P\Phi_{il},P\Psi_{il}),
(P\Phi_{jh},P\Psi_{jh})\Big\rangle
=  & \int_\Omega\Big[pV_i^{p-1}\Phi_{il}(P\Phi_{jh}-\Phi_{jh})
+qU_i^{q-1}\Psi_{il}(P\Psi_{jh}-\Psi_{jh})\Big]dx\nonumber\\
& +
\int_\Omega\Big[pV_i^{p-1}\Phi_{il}\Phi_{jh}
+qU_i^{q-1}\Psi_{il}\Psi_{jh}\Big]dx.
\end{align}
First, by (\ref{bubble}), (\ref{gisewr1}), (\ref{gisewr2}) and Lemma \ref{uwal}, we get
\begin{align*}
& \int_\Omega\Big[pV_i^{p-1}\Phi_{il}(P\Phi_{jh}-\Phi_{jh})
+qU_i^{q-1}\Psi_{il}(P\Psi_{jh}-\Psi_{jh})\Big]dx\nonumber\\
\leq &  p |V_i^{p-1}|_{L^{\frac{p+1}{p-1}}(\Omega)}
 |\Phi_{il}|_{L^{p+1}(\Omega)}
 |P\Phi_{jh}-\Phi_{jh}|_{L^{p+1}(\Omega)}
  +
 q |U_i^{q-1}|_{L^{\frac{q+1}{q-1}}(\Omega)}
 |\Psi_{il}|_{L^{q+1}(\Omega)}
 |P\Psi_{jh}-\Psi_{jh}|_{L^{q+1}(\Omega)}\nonumber\\
\leq &
 C\Big(\mu^{N-\frac{N(p-1)}{p+1}
 +N-2-\frac{N}{p+1}+\frac{N_p}{q+1}}
 +
  \mu^{N-\frac{N(q-1)}{q+1}+N-2-\frac{N}{q+1}
  +\frac{N_p}{q+1}}\Big)
  \leq C.
\end{align*}
Next, we estimate the second term in (\ref{ssia}).
By (\ref{bubble}), (\ref{psi}), (\ref{pssi}) and Lemma \ref{ls}, we obtain
\begin{eqnarray*}
&&
\int_\Omega\Big[pV_i^{p-1}(x)\Phi_{il}(x)\Phi_{jh}(x)
+qU_i^{q-1}(x)\Psi_{il}(x)\Psi_{jh}(x)\Big]dx\\
& \leq & C \mu_i^{-\frac{N(p-1)}{p+1} -\frac{N}{p+1}}
 \mu_j^{ -\frac{N}{p+1}}
 \int_{\mathbb{R}^N} V_{1,0}^{p-1}(y)
 \Phi^l_{1,0}(y)\Phi^h_{1,0}
 (\frac{\mu_iy+\xi_i-\xi_j}{\mu_j})dy\\
\quad&+   & C\mu_i^{-\frac{N(q-1)}{q+1} -\frac{N}{q+1}}
 \mu_j^{ -\frac{N}{q+1}}
 \int_{\mathbb{R}^N}U_{1,0}^{q-1}(y)\Psi^l_{1,0}(y)
 \Psi^h_{1,0}(\frac{\mu_iy+\xi_i-\xi_j}{\mu_j})dy\\
 & \leq &
 \left\{ \arraycolsep=1.5pt
   \begin{array}{lll}
 C_h(1+o(1))\ \   &{\rm if }  \ j=i \ {\rm and}\ l=h, \\[2mm]
 o(1)\ \  & {\rm  else}.
\end{array}
\right.
\end{eqnarray*}
Combining all the estimates, we obtain the results.
\end{proof}

\begin{lemma}\label{Ckls}
We have the following estimates,
\begin{equation}\label{sumbu3}
 |  f_0( PV_i)- f_0(V_i) |_{L^{\frac{p+1}{p}}(\Omega)}
   =O(\mu^{\frac{N}{q+1}}).
\end{equation}
\begin{eqnarray}\label{sumfbu2}
 |  f_0^{'}( PV_i)- f^{'}_0(V_i)|_{L^{\frac{p+1}{p-1}}(\Omega)}
    =
 \left\{ \arraycolsep=1.5pt
   \begin{array}{lll}
 O(\mu^{\frac{N}{q+1}})\ \   &{\rm if }\  N\leq 6, \\[2mm]
 O(\mu^{N-2})\ \  & {\rm if}\   N>6,
\end{array}
\right.
\end{eqnarray}

\begin{eqnarray}\label{sumfdbou1}
  |  f_0( PV_i)- f_0(V_i)
     -  f^{'}_0(V_i) (PV_i-V_i) |_{L^{\frac{p+1}{p}}(\Omega)}
 =
\left\{ \arraycolsep=1.5pt
   \begin{array}{lll}
 O(\mu^{(N-2)+\frac{N}{q+1}})\ \   &{\rm if }\  N\leq 6, \\[2mm]
 O(\mu^{(N-2)p})\ \  & {\rm if}\   N>6.
\end{array}
\right.
\end{eqnarray}
\end{lemma}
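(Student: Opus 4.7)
My plan is to base all three estimates on two preliminary facts: the uniform pointwise bound
\[
|PV_i(x) - V_i(x)| \leq C\mu^{N/(q+1)} \quad \text{in } \Omega,
\]
which follows from Lemma \ref{uaoe} together with the uniform boundedness of $H(\cdot,\xi_i)$ on $\Omega$ (recall $\dist(\xi_i,\partial\Omega)\geq \delta_2$ in $\Lambda$), and the ordering $0 \leq PV_i \leq V_i$, which holds because $V_i - PV_i$ is harmonic in $\Omega$ with positive boundary trace $V_i|_{\partial\Omega}$. With these in hand each estimate reduces to applying a pointwise elementary inequality for $f_0$ (or its derivative, or its first-order Taylor remainder), followed by rescaling of the bubble integrals via $y=(x-\xi_i)/\mu$.

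For (\ref{sumbu3}) I would apply the mean value theorem to $t \mapsto t^p$: since $0 \leq PV_i \leq V_i$, pointwise
\[
0 \leq V_i^p - PV_i^p \leq p V_i^{p-1}(V_i - PV_i) \leq C\mu^{N/(q+1)} V_i^{p-1}.
\]
It then suffices to show $|V_i^{p-1}|_{L^{(p+1)/p}(\Omega)} = O(1)$. The change of variables $y = (x-\xi_i)/\mu$ converts this norm into a rescaled integral of $V_{1,0}^{(p^2-1)/p}$ over $\mu^{-1}(\Omega - \xi_i)$; combining the decay $V_{1,0}(y) \sim |y|^{-(N-2)}$ from Lemma \ref{ls} with the identity $(N-2)p-2 = N(p+1)/(q+1)$ stated just before Proposition \ref{leftside} makes the dominant exponent in $\mu$ nonnegative across the whole admissible range, which completes the estimate.

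Estimates (\ref{sumfbu2}) and (\ref{sumfdbou1}) are more delicate because $f_0'(v) = p|v|^{p-1}$ is only H\"{o}lder continuous of order $p-1 < 1$ throughout our parameter range. My plan is to combine two competing pointwise inequalities for $a \geq b \geq 0$: the concavity bound $|a^{p-1}-b^{p-1}| \leq |a-b|^{p-1}$, useful where $V_i$ is small (away from $\xi_i$), and the sharper mean-value bound $|a^{p-1}-b^{p-1}| \leq (p-1)\min(a,b)^{p-2}|a-b|$, effective in the region where $V_i$ is large so that $PV_i \asymp V_i$. For the Taylor remainder in (\ref{sumfdbou1}) I would use the analogous inequality $|f_0(a)-f_0(b)-f_0'(b)(a-b)| \leq C\min\{|a-b|^p,\, b^{p-2}(a-b)^2\}$. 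Splitting $\Omega = B(\xi_i,\rho) \cup (\Omega\setminus B(\xi_i,\rho))$, picking the appropriate inequality on each region, and then rescaling the bubble integrals should produce the claimed bounds.

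The main technical obstacle is accounting for the dichotomy $N \leq 6$ versus $N > 6$ in (\ref{sumfbu2}) and (\ref{sumfdbou1}). It arises from which of the two competing inequalities above dominates on the inner ball $B(\xi_i,\rho)$: when $N \leq 6$, the mean-value bound integrated against a bounded power of $V_i$ via bubble-scaling yields $\mu^{N/(q+1)}$ (respectively $\mu^{N-2+N/(q+1)}$); when $N > 6$, the exponent $p-1$ is forced to be so small that only the concavity/power bound combined with the uniform pointwise estimate on $|PV_i - V_i|$ is strong enough, and this returns the stronger rate $\mu^{N-2}$ (respectively $\mu^{(N-2)p}$). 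Verifying that the bubble scaling produces precisely these powers, using the critical-hyperbola identity to simplify the resulting exponents, is the one substantive computation; the rest is routine region-splitting and H\"{o}lder's inequality.
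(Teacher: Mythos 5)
Your overall strategy for this lemma --- establish the pointwise bound $|PV_i-V_i|\leq C\mu^{N/(q+1)}$ from Lemma \ref{uaoe} together with the maximum-principle ordering $0\leq PV_i\leq V_i$, apply an elementary pointwise inequality for the power function, and finish by bubble rescaling --- is the same as the paper's. For (\ref{sumbu3}) your mean-value argument, reducing to $|V_i^{p-1}|_{L^{(p+1)/p}(\Omega)}=O(1)$, is exactly the paper's first computation. For (\ref{sumfbu2}) and (\ref{sumfdbou1}) the paper does not split the domain by region; instead it appeals to a single two-branch elementary inequality (their (\ref{fuin1}), introduced at the start of the proof), namely $\bigl||u+v|^t-u^t\bigr|\leq c(t)\min\{u^{t-1}|v|,|v|^t\}$ for $0<t<1$ and the analogous sum-form for $t\geq 1$, and an analogue for the Taylor remainder, with $u=V_i$ and $v=PV_i-V_i$. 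Your two ``competing inequalities'' are essentially these two branches, and your Taylor-remainder alternatives $\min\{|a-b|^p,\,b^{p-2}(a-b)^2\}$ (with $b=V_i$) coincide with the paper's. So the organisation differs --- region splitting vs.\ a pointwise $\min$ --- but the estimates to be proved are the same.

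Two points deserve comment. First, your first-derivative mean-value bound $|a^{p-1}-b^{p-1}|\leq(p-1)\min(a,b)^{p-2}|a-b|$ puts $\min(a,b)^{p-2}=PV_i^{p-2}$ in the dangerous slot, which (since $p-2<0$) is the \emph{larger} factor; the paper's inequality uses $u^{t-1}=V_i^{p-2}$, the smaller one. Your version is still valid pointwise, and you implicitly compensate by invoking it only where $PV_i\asymp V_i$, but the paper's form is uniformly milder and is what makes the estimate close without needing the region split for this term. Second, the mechanism you propose for the $N\leq 6$ vs.\ $N>6$ dichotomy (``which inequality dominates on the inner ball'') is not the paper's: the paper splits on whether $t=p-1\geq1$ or $<1$ and uses the corresponding branch of their elementary inequality. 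Your plan does not actually derive the boundary $N=6$ from the bubble-scaling exponents, and that is a genuine gap --- before the proposal can be called a proof you would need to show where the rates $\mu^{N/(q+1)}$, $\mu^{N-2}$, $\mu^{(N-2)+N/(q+1)}$, $\mu^{(N-2)p}$ come from and why the crossover happens at $N=6$. (As a caveat, the paper's own mechanism is inherited from the scalar critical case of \cite{mp}, where $p=\tfrac{N+2}{N-2}$ so $p\geq2\Leftrightarrow N\leq6$; under the present hypothesis $p<\tfrac{N-1}{N-2}\leq2$ one always has $p-1<1$, so the paper's claimed case split does not literally apply and would itself need re-justification. This does not excuse the vagueness in your plan, but it does mean matching the paper's stated rationale is not a sufficient goal here.)
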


\begin{proof}
For any $u>0$, $v\in\R$, there is a fact that
\begin{eqnarray}\label{fuin1}
\Big||u+v|^t-u^t\Big|  \leq
\left\{ \arraycolsep=1.5pt
   \begin{array}{lll}
 c(t)\min\{u^{t-1}|v|,|v|^t\}\ \   &{\rm if }\  0<t<1, \\[2mm]
 c(t)(u^{t-1}|v|+|v|^t)\ \  & {\rm if}\   t\geq1.
\end{array}
\right.
\end{eqnarray}
Since $t=p >1$ for $N\geq 3$,
it follows that
\begin{align*}
& \Big(\int_\Omega |  f_0( PV_i)- f_0(V_i) |^{\frac{p+1}{p}}dx\Big)^{\frac{p}{p+1}} \\
  \leq & C
 \mu_i^{\frac{N}{q+1}}\Big(\int_\Omega |V^p_i(x) H(x,\xi_i)|^{\frac{p+1}{p}}dx\Big)^{\frac{p}{p+1}}
 +o(\mu^{\frac{N}{q+1}})\\
  = & C\mu_i^{\frac{N}{q+1}}H(\xi_i,\xi_i)
    \Big(\int_{\frac{\Omega-\xi_i}{\mu_i}} |V_{1,0}^p(y)|^{\frac{p+1}{p}}dy
    \Big)^{\frac{p}{p+1}}
    +o(\mu^{\frac{N}{q+1}})
  \leq   C\mu_i^{\frac{N}{q+1}}.
\end{align*}
We obtain the estimate (\ref{sumbu3}).
Now, we prove (\ref{sumfbu2}).
Since $t=p-1\geq 1$ for $N\leq 6$,
using (\ref{fuin1}),  (\ref{uaoe}) and (\ref{bubble}), we have
\begin{align*}
 & \Big(\int_\Omega |  f^{'}_0( PV_i)- f^{'}_0(V_i) |^{\frac{p+1}{p-1}}dx\Big)^{\frac{p-1}{p+1}} \\
 \leq & C
 \mu_i^{\frac{N}{q+1}}\Big(\int_\Omega |V^{p-1}_i(x) H(x,\xi_i)|^{\frac{p+1}{p-1}}dx\Big)^{\frac{p-1}{p+1}}
 +o(\mu^{\frac{N}{q+1}})\\
 = & C\mu_i^{\frac{N}{q+1} -\frac{N(p-1)}{p+1}+
\frac{N(p-1)}{p+1}}
H(\xi_i,\xi_i)
    \Big(\int_{\frac{\Omega-\xi_i}{\mu_i}} |V_{1,0}^{p-1}(y)|^{\frac{p+1}{p-1}}dy
    \Big)^{\frac{p-1}{p+1}}
    +o(\mu^{\frac{N}{q+1}})
  \leq   C\mu_i^{\frac{N}{q+1}}.
\end{align*}
When $0<t=p-1<1$ for $N>6$, we get
\begin{align*}
& \bigg|\Big| V_i-\left(\frac{b_{N,p}}{\gamma_N}\right)
 \mu_i^{\frac{N}{q+1}}H(x,\xi_i)
 +o(\mu^{\frac{N}{q+1}})\Big|^{p-1}
    -|V_i^{p-1}\bigg|\\
\leq & C \min\bigg\{V_i^{p-2}\Big| -\left(\frac{b_{N,p}}{\gamma_N}\right)
 \mu_i^{\frac{N}{q+1}}H(x,\xi_i)
 +o(\mu^{\frac{N}{q+1}})\Big|,
 \Big| -\left(\frac{b_{N,p}}{\gamma_N}\right)
 \mu_i^{\frac{N}{q+1}}H(x,\xi_i)
 +o(\mu^{\frac{N}{q+1}})\Big|^{p-1}\bigg\}.
\end{align*}
A computation
leads to
\begin{align*}
&  \bigg(\int_\Omega \bigg|V_i^{p-2}\Big[ -\left(\frac{b_{N,p}}{\gamma_N}\right)
 \mu_i^{\frac{N}{q+1}}H(x,\xi_i)
 +o(\mu^{\frac{N}{q+1}})\Big] \bigg|^{\frac{p+1}{p-1}}dx\bigg)^{\frac{p-1}{p+1}}\\
 \leq & C
 \mu_i^{\frac{N}{q+1}}\Big(\int_\Omega \Big|V^{p-2}_i(x) H(x,\xi_i)\Big|^{\frac{p+1}{p-1}}dx\Big)^{\frac{p-1}{p+1}}
 +o(\mu^{\frac{N}{q+1}})\\
 = & C\mu_i^{\frac{N}{q+1}-\frac{N(p-2)}{p+1}
 +\frac{N(p-1)}{p+1}}
 H(\xi_i,\xi_i)
    \Big(\int_{\frac{\Omega-\xi_i}{\mu_i}} |V_{1,0}^{p-2}(y)|^{\frac{p+1}{p-1}}dy
    \Big)^{\frac{p-1}{p+1}}
    +o(\mu^{\frac{N}{q+1}})
 \leq   C\mu^{N-2}.
\end{align*}
On the other hand,
\begin{align*}
&  \bigg(\int_\Omega \bigg|\Big[ -\left(\frac{b_{N,p}}{\gamma_N}\right)
 \mu_i^{\frac{N}{q+1}}H(x,\xi_i)
 +o(\mu^{\frac{N}{q+1}})\Big]^{p-1} \bigg|^{\frac{p+1}{p-1}}dx\bigg)^{\frac{p-1}{p+1}}\\
 \leq & C
 \mu_i^{\frac{N(p-1)}{q+1}}\Big(\int_\Omega | H(x,\xi_i)|^{p+1}dx\Big)^{\frac{p-1}{p+1}}
 +o(\mu^{\frac{N(p-1)}{q+1}})\\
 = & C\mu_i^{\frac{N(p-1)}{q+1}
 +\frac{N(p-1)}{p+1}}
    \Big(\int_{\frac{\Omega-\xi_i}{\mu_i}} |H(\mu_i y+\xi_i,\xi_i)|^{p+1}dy
    \Big)^{\frac{p-1}{p+1}}
    +o(\mu^{\frac{N}{q+1}})
 \leq  C\mu^{(N-2)(p-1)}.
\end{align*}
By collecting  the previous estimates,
we get the estimate (\ref{sumfbu2}).

Finally, there is a fact that
\begin{eqnarray*}\label{fuin2}
\Big||u+v|^t(u+v)-u^{t+1}-(1+t)u^tv\Big|  \leq
\left\{ \arraycolsep=1.5pt
   \begin{array}{lll}
 C(t)\min\{u^{t-1}v^2,|v|^{t+1}\}\ \   &{\rm if }\  0<t<1, \\[2mm]
 C(t)(u^{t-1}v^2+|v|^{t+1})\ \  & {\rm if}\   t\geq1,
\end{array}
\right.
\end{eqnarray*}
which shows
\begin{eqnarray*}\label{fuin2}
&& \Big| f_0( PV_i)- f_0(V_i)
     -  f^{'}_0(V_i) (PV_i-V_i) \Big|  \\
 & \leq&
\left\{ \arraycolsep=1.5pt
   \begin{array}{lll}
 C\min\bigg\{V_i^{p-2}\Big[-\left(\frac{b_{N,p}}{\gamma_N}\right)
 \mu_i^{\frac{N}{q+1}}H(x,\xi_i)
 +o(\mu^{\frac{N}{q+1}})\Big]^2,\\
\quad  \Big|-\left(\frac{b_{N,p}}{\gamma_N}\right)
 \mu_i^{\frac{N}{q+1}}H(x,\xi_i)
 +o(\mu^{\frac{N}{q+1}})\Big|^p\bigg\}\ \   &{\rm if }\  0<p-1<1, \\[2mm]
 C \bigg(V_i^{p-2}\Big[-\left(\frac{b_{N,p}}{\gamma_N}\right)
 \mu_i^{\frac{N}{q+1}}H(x,\xi_i)
 +o(\mu^{\frac{N}{q+1}})\Big]^2
 \\
\quad
 +\Big| -\left(\frac{b_{N,p}}{\gamma_N}\right)
 \mu_i^{\frac{N}{q+1}}H(x,\xi_i)
 +o(\mu^{\frac{N}{q+1}}) \Big|^p\bigg)\ \  & {\rm if}\   p-1\geq1.
\end{array}
\right.
\end{eqnarray*}
Since  $p-1\geq 1$ for $N\leq 6$,
we have
\begin{align*}
&\bigg |\int_\Omega  \bigg| V_i^{p-2}\Big[-\left(\frac{b_{N,p}}{\gamma_N}\right)
 \mu_i^{\frac{N}{q+1}}H(x,\xi_i)
+o(\mu^{\frac{N}{q+1}})\Big]^2
+\Big| -\left(\frac{b_{N,p}}{\gamma_N}\right)
 \mu_i^{\frac{N}{q+1}}H(x,\xi_i)
 +o(\mu^{\frac{N}{q+1}}) \Big|^p
 \bigg| ^{\frac{p+1}{p}}dx\bigg|^{\frac{p}{p+1}}\\
\leq  & C \bigg| \int_{\frac{\Omega-\xi_i}{\mu_i}} \bigg|\mu_i^{\frac{2N}{q+1}-\frac{N(p-2)}{p+1}}
V_{1,0}^{p-2}(y)
\Big[H(\mu_i y+\xi_i,\xi_i)\Big ]^2
+\mu_i^{\frac{Np}{q+1}}H(\mu_i y+\xi_i,\xi_i)\Big]
\bigg|^{\frac{p+1}{p}}dy\bigg|^{\frac{p}{p+1}}
\\
\leq  & C \mu^{(N-2)-\frac{Np}{p+1}}.
\end{align*}
On the other hand,  $0<t=p-1<1$ for $N>6$, there  holds
\begin{align*}
  &\bigg |\int_\Omega  \bigg| V_i^{p-2}\Big[-\left(\frac{b_{N,p}}{\gamma_N}\right)
 \mu_i^{\frac{N}{q+1}}H(x,\xi_i)
 +o(\mu^{\frac{N}{q+1}})\Big]^2\bigg| ^{\frac{p+1}{p}}dx\bigg|^{\frac{p}{p+1}}\\
\leq  & C \mu_i^{\frac{2N}{q+1}-\frac{N(p-2)}{p+1}}
 \mu_i^{\frac{N(p-1)}{p+1}}
\bigg|\int_{\frac{\Omega-\xi_i}{\mu_i}}  \Big |
V_{1,0}^{p-2}(y)H(\mu_i y+\xi_i,\xi_i)\Big |^{\frac{p+1}{p}}dy\bigg|^{\frac{p}{p+1}}
\leq    C \mu^{(N-2)+\frac{N}{q+1}},
\end{align*}
and
\begin{align*}
\bigg |\int_\Omega  \bigg| \Big[ -\left(\frac{b_{N,p}}{\gamma_N}\right)
 \mu_i^{\frac{N}{q+1}}H(x,\xi_i)
 +o(\mu^{\frac{N}{q+1}}) \Big]^p
 \bigg| ^{\frac{p+1}{p}}dx\bigg|^{\frac{p}{p+1}}
\leq   C \mu_i^{\frac{Np}{q+1}+\frac{Np}{p+1}}
\leq   C \mu^{(N-2)p}.
\end{align*}
The estimate  (\ref{sumfdbou1}) holds.
\end{proof}

\begin{lemma}\label{pfS1}
There holds
\[
\Big|g_0(\mathbf{P}U_{\mathbf{d},\boldsymbol\xi})
 -  \sum_{i=1}^{k}g_0(U_i)\Big|_{L^{\frac{q+1}{q}}(\Omega)}
 =O(\mu^{\frac{Np}{q+1}} ),
\]
and
\[
\Big|g^{'}_0(\mathbf{P}U_{\mathbf{d},\boldsymbol\xi})
 -  \sum_{i=1}^{k}g^{'}_0(U_i)\Big|_{L^{\frac{q+1}{q-1}}(\Omega)}
 =O(\mu^{\frac{Np}{q+1}} ).
\]
\end{lemma}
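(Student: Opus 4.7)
The strategy is to write $W := \mathbf{P}U_{\mathbf{d},\boldsymbol\xi}$, $S := \sum_{i=1}^k U_i$, and $R := W - S$, and split
\[
g_0(W) - \sum_{i=1}^k g_0(U_i) = \bigl[g_0(S+R) - g_0(S)\bigr] + \Bigl[g_0(S) - \sum_{i=1}^k g_0(U_i)\Bigr],
\]
then estimate each bracket separately in $L^{(q+1)/q}(\Omega)$. By Lemma \ref{puo} and the uniform boundedness of $\widetilde{H}_{\mathbf{d},\boldsymbol\xi}$ on $\Omega$, the remainder satisfies $|R|_{L^\infty(\Omega)} = O(\mu^{Np/(q+1)})$, which is the key quantitative input.

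For the first bracket I would apply the pointwise mean value inequality $|g_0(S+R) - g_0(S)| \leq C(|S|^{q-1} + |R|^{q-1})|R|$ (valid for every $q>1$ upon splitting into $q \geq 2$ and $1 < q < 2$). Taking the $L^{(q+1)/q}$ norm and using the elementary scaling identity $|U_i^{q-1}|_{L^{(q+1)/q}(\Omega)} = O(\mu_i^{N/(q+1)})$ gives
\[
\bigl|g_0(S+R) - g_0(S)\bigr|_{L^{(q+1)/q}} \leq C|R|_{L^\infty}|S^{q-1}|_{L^{(q+1)/q}} + C|R|_{L^\infty}^q |\Omega|^{q/(q+1)} = O\bigl(\mu^{N(p+1)/(q+1)}\bigr)+O\bigl(\mu^{Npq/(q+1)}\bigr),
\]
both of which are $o(\mu^{Np/(q+1)})$ because $q > 1$.

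For the second bracket I plan to use the standard separated-bubble interaction argument (in the spirit of Lemma \ref{ysy} and Lemma 2.12 in \cite{kpm}): split $\Omega$ into the balls $B_j := B(\xi_j,\delta_2/2)$ and the complement. Thanks to the decay in Lemma \ref{ls}, the separation $|\xi_i-\xi_j|\geq\delta_2$, and the identity $(N-2)p - 2 - N/(q+1) = Np/(q+1)$, every bubble $U_i$ is uniformly $O(\mu^{Np/(q+1)})$ outside $B(\xi_i,\delta_2/2)$. A Taylor expansion of $g_0(U_j + \sum_{i \neq j} U_i)$ around $U_j$ on $B_j$, combined with subtraction of $\sum_{i \neq j} g_0(U_i)$ (which is itself $O(\mu^{Npq/(q+1)})$ there), produces a bound of order $O(\mu^{N(p+1)/(q+1)})$ after passing to the $L^{(q+1)/q}$ norm. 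The contribution from the complementary region is likewise subdominant. Summing both brackets yields the first estimate.

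The second estimate (for $g_0'$) follows by exactly the same decomposition, with $L^{(q+1)/q}$ replaced by $L^{(q+1)/(q-1)}$ and the mean value inequality replaced by
\[
\bigl||a+b|^{q-1} - |a|^{q-1}\bigr| \leq C\bigl(|a|^{q-2}|b| + |b|^{q-1}\bigr)\quad\text{when } q \geq 2,
\]
and the H\"older-type bound $C|b|^{q-1}$ when $1 < q < 2$. The main technical obstacle I anticipate is the bookkeeping of rescaling exponents in the derivative case: the factor $U_i^{q-2}$ appearing after rescaling must be controlled in $L^{(q+1)/(q-1)}$, which is borderline and uses the sharp decay rate in Lemma \ref{ls} together with $(N-2)p-2>0$. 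Once this accounting is done, all remainder terms collapse to $O(\mu^{Np/(q+1)})$, giving the claimed bounds.
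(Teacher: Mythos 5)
Your decomposition and choice of tools match the paper's proof: you split off the bracket $g_0(\mathbf{P}U_{\mathbf{d},\boldsymbol\xi})-g_0\bigl(\sum_i U_i\bigr)$ controlled by Lemma~\ref{puo} from the separated-bubble interaction $g_0\bigl(\sum_i U_i\bigr)-\sum_i g_0(U_i)$, estimate the second on balls $B(\xi_j,\delta_2/2)$ and on the complement where each bubble is uniformly $O(\mu^{Np/(q+1)})$, and combine. This is exactly the route the paper takes.

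There are, however, two issues. The minor one concerns the asserted scaling identity $|U_i^{q-1}|_{L^{(q+1)/q}(\Omega)}=O(\mu_i^{N/(q+1)})$. After the change of variables this requires $\int_{\mathbb{R}^N}U_{1,0}^{(q-1)(q+1)/q}<\infty$, and with the decay $U_{1,0}(y)\sim|y|^{-[(N-2)p-2]}$ and the identity $(N-2)p-2=N(p+1)/(q+1)$, convergence is equivalent to $p(q-1)>1$. This fails for admissible $(p,q)$ on the hyperbola when $N$ is large and $p$ is close to $1$; in that regime the integral over $(\Omega-\xi_i)/\mu_i$ grows and one gets $|U_i^{q-1}|_{L^{(q+1)/q}(\Omega)}\sim\mu_i^{Np(q-1)/(q+1)}$, which is strictly larger than $\mu_i^{N/(q+1)}$. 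Fortunately the damage is cosmetic: multiplying by $|R|_{L^\infty}=O(\mu^{Np/(q+1)})$ still gives $O(\mu^{Npq/(q+1)})=o(\mu^{Np/(q+1)})$, so the first estimate stands with a corrected intermediate bound.

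The genuine gap is the derivative estimate when $1<q<2$, which does occur on the critical hyperbola for $N\ge 11$ and $p$ close to $(N-1)/(N-2)$. Your bound $\bigl||S+R|^{q-1}-|S|^{q-1}\bigr|\le C|R|^{q-1}$ yields, in $L^{(q+1)/(q-1)}$, exactly $C|R|_{L^{q+1}}^{q-1}=O\bigl(\mu^{Np(q-1)/(q+1)}\bigr)$, and since $q-1<1$ this exponent is \emph{smaller} than $Np/(q+1)$, so the bound is \emph{weaker} than the target $O(\mu^{Np/(q+1)})$. Your closing assertion that ``all remainder terms collapse to $O(\mu^{Np/(q+1)})$'' is therefore not established in this range. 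Worse, the rate cannot be improved by a smarter inequality: near $\partial\Omega$ one has $\mathbf{P}U_{\mathbf{d},\boldsymbol\xi}=0$ while each $U_i\sim\mu^{Np/(q+1)}$ there, so $\bigl|g_0'(\mathbf{P}U_{\mathbf{d},\boldsymbol\xi})-\sum_i g_0'(U_i)\bigr|\sim q\sum_i U_i^{q-1}\sim\mu^{Np(q-1)/(q+1)}$ on a set of measure $O(1)$, which forces the $L^{(q+1)/(q-1)}$ norm to be at least of this size. So for $1<q<2$ the claimed rate $O(\mu^{Np/(q+1)})$ in the second estimate is not reachable by this argument (the paper itself only proves the first estimate and dismisses the second as ``similar''), and you should either restrict to $q\ge 2$ or replace the claimed exponent by $Np(q-1)/(q+1)$ for $q<2$ and check that the downstream estimates still close.
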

\begin{proof}
We prove the first estimate,
the another one is obtained in a similar way. Observe that,
\begin{align}\label{susel}
&\int_\Omega\Big|g_0(\mathbf{P}U_{\mathbf{d},\boldsymbol\xi})
 -  \sum_{i=1}^{k}g_0(U_i)\Big|^{\frac{q+1}{q}}dx \nonumber\\
 \leq &  \int_\Omega\Big|g_0(\mathbf{P}U_{\mathbf{d},\boldsymbol\xi})
 -g_0(\sum_{i=1}^{k}U_i)\Big|^{\frac{q+1}{q}}dx
 +\int_\Omega\Big|g_0( \sum_{i=1}^{k}U_i)
 -  \sum_{i=1}^{k}g_0(U_i)\Big|^{\frac{q+1}{q}}dx.
\end{align}
First, it follows from Lemma \ref{puo},
\begin{align}\label{usel}
 &  \int_\Omega\Big|g_0(\mathbf{P}U_{\mathbf{d},\boldsymbol\xi})
 -g_0( \sum_{i=1}^{k}U_i)\Big|^{\frac{q+1}{q}}dx\nonumber\\
 \leq & \int_\Omega\Big| (\mathbf{P}U_{\mathbf{d},\boldsymbol\xi})^q
 -  (\sum_{i=1}^{k}U_i)^q\Big|^{\frac{q+1}{q}}dx\nonumber\\
 \leq & \int_\Omega\bigg| \bigg(\sum_{i=1}^kU_i-\mu^{\frac{Np}{q+1}}
\Big(\frac{b_{N,p}}{\gamma_N}\Big)^p\widetilde{H}_{\mathbf{d},\boldsymbol{\xi}}(x)
+o(\mu^{\frac{Np}{q+1}})\bigg)^q
 -  (\sum_{i=1}^{k}U_i)^q\bigg|^{\frac{q+1}{q}}dx
  =   O(\mu^{\frac{Np}{q+1}\frac{q+1}{q}} ).
\end{align}
By Taylor  expansion, (\ref{ls}) and (\ref{bubble}), we get
\begin{align}\label{ajow}
& \int_\Omega\Big|g_0( \sum_{i=1}^{k}U_i)
 -   \sum_{i=1}^{k}g_0(U_i)\Big|^{\frac{q+1}{q}}dx \nonumber \\
 =  &\int_\Omega\Big| ( \sum_{i=1}^{k}U_i)^q
 -  \sum_{i=1}^{k} U_i^q\Big|^{\frac{q+1}{q}}dx \nonumber\\
 \leq & \sum_{j\neq i}^{k}\int_{B_i}\Big|U_i^{q-1}U_j\Big|^{\frac{q+1}{q}}dx
 +O(\mu^{\frac{Np}{q+1}\frac{q+1}{q}} )\nonumber\\
  \leq & \sum_{j\neq i}^{k}\int_{B_i}\bigg|\mu_i^{-\frac{N(q-1)}{q+1}}
  U^{q-1}_{1,0}(\frac{x-\xi_i}{\mu_i})
  \mu_j^{-\frac{N}{q+1}}U_{1,0}(\frac{x-\xi_j}{\mu_j})\bigg|^{\frac{q+1}{q}}dx
 +O(\mu^{\frac{Np}{q+1}\frac{q+1}{q}} )\nonumber\\
 \leq & \sum_{j\neq i}^{k}\mu_i^{N-\frac{N(q-1)}{q+1}\frac{q+1}{q}}\mu_j^{-\frac{N}{q+1}\frac{q+1}{q}}
 \int_{\frac{B_i-\xi_i}{\mu_i}}\bigg|
  U^{q-1}_{1,0}(y)
  U_{1,0}(\frac{\mu_iy+\xi_i-\xi_j}{\mu_j})\bigg|^{\frac{q+1}{q}}dy
 +O(\mu^{\frac{Np}{q+1}\frac{q+1}{q}} )\nonumber\\
  = & O(\mu^{\frac{Np}{q+1}\frac{q+1}{q}} ).
 \end{align}
Finally, inserting (\ref{usel}) and (\ref{ajow}) into (\ref{susel}),
 we obtain the second result and finish the proof of this lemma.
\end{proof}

\begin{lemma}\cite{mp}\label{zsj}
Let $\theta>0$ and $s>1$,
if $\epsilon>0$ small enough, for any  $u, v\in\mathbb{R}$,
  it holds that
  
$ (i)$
$ | f_\epsilon( u)-f_0(u)|
   \leq \epsilon |u|^s\ln\ln(e+|u |)$.
   
$ (ii)$
$|f^{'}_\epsilon(u)|\leq C|u |^{s-1}$.

$ (iii)$
$|f^{'}_\epsilon(u)-f^{'}_0(u) |
\leq \epsilon |u  |^{s-1}
\Big(s\ln\ln(e+|u | )+\frac{1}{\ln (e+ |u |)} \Big)$.

$ (iv)$
\begin{eqnarray*}
\Big|f^{'}_\epsilon(u+v)-f^{'}_\epsilon(u)\Big|\leq
\left\{ \arraycolsep=1.5pt
   \begin{array}{lll}
 C (|u  |^{s-2}+|v|^{s-2} )|v|\ \   &{\rm if }\  N\leq6, \\[2mm]
 C (|v|^{s-1}+\epsilon |u|^{s-1})\ \  & {\rm if}\  N>6,
\end{array}
\right.
\end{eqnarray*}

$ (v)$
$\ln\ln  (e+\mu^{-\theta} u )=\ln\ln( \mu^{-\theta} )
+\ln\Big(1+\frac{ \ln(e^{1-\theta|\ln\mu|}+u)}
{\theta|\ln \mu|}\Big)$.

$ (vi)$
$\lim\limits_{ \mu\rightarrow0}\bigg(|\ln\mu|\ln\Big(1+\frac{ \ln(e^{1-\theta|\ln \mu|}+u)}
{\theta|\ln\mu|}\Big)\bigg)=\frac{1}{\theta}\ln u$,
where $C$ is a   positive constant.
\end{lemma}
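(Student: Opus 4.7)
\textbf{Proof proposal for Lemma \ref{zsj}.} The six assertions are independent elementary estimates on the scalar function $f_\epsilon(u)=|u|^{s-1}u/[\ln(e+|u|)]^\epsilon$, so the plan is to treat them separately by direct calculation, saving the delicate piecewise bound (iv) for last.

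First I would dispose of the algebraic identities (v) and (vi). For (v), factor $\mu^{-\theta}$ out of the argument to get $e+\mu^{-\theta}u=\mu^{-\theta}(e\mu^\theta+u)=\mu^{-\theta}(e^{1-\theta|\ln\mu|}+u)$, and apply $\ln\ln$ to the right-hand side. For (vi), observe that $e^{1-\theta|\ln\mu|}+u\to u$ as $\mu\to 0^+$, so the argument of the outer logarithm tends to $1$ and the expansion $\ln(1+x)=x+O(x^2)$ reduces $|\ln\mu|\,\ln\bigl(1+\tfrac{\ln(e^{1-\theta|\ln\mu|}+u)}{\theta|\ln\mu|}\bigr)$ to $\theta^{-1}\ln(e^{1-\theta|\ln\mu|}+u)$, whose limit is $\theta^{-1}\ln u$.

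Next I would tackle (i)--(iii). For (i), I write $f_\epsilon(u)-f_0(u)=f_0(u)\bigl([\ln(e+|u|)]^{-\epsilon}-1\bigr)$ and use the one-line inequality $0\leq 1-t^{-\epsilon}\leq\epsilon\ln t$ valid for $t\geq 1$ (apply $1-e^{-x}\leq x$ with $x=\epsilon\ln t$) to the choice $t=\ln(e+|u|)\geq 1$, which produces exactly $\epsilon|u|^s\ln\ln(e+|u|)$. For (ii), a direct differentiation gives two summands: $s|u|^{s-1}[\ln(e+|u|)]^{-\epsilon}$ and one carrying an extra factor $|u|/(e+|u|)\in[0,1)$ and a logarithmic denominator, both manifestly bounded by $C|u|^{s-1}$. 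For (iii), I differentiate $f_\epsilon-f_0$ and apply the inequality of (i) to the factor $[\ln(e+|u|)]^{-\epsilon}-1$; the additive $1/\ln(e+|u|)$ appearing in the stated bound is precisely the contribution of the chain-rule term hitting the logarithm in the denominator.

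The main obstacle is (iv), where the threshold $N=6$ (equivalently the sign of $s-2$) forces a dichotomy. When $N\leq 6$ we have $s-2\geq 0$, and the bound follows by applying the mean value theorem on $[u,u+v]$ to $f_\epsilon'$, combined with the growth $|f_\epsilon''(w)|\leq C|w|^{s-2}$ obtained by a second differentiation analogous to (ii); since $|u+tv|^{s-2}\leq C(|u|^{s-2}+|v|^{s-2})$ when $s-2\geq 0$, this yields the claim. When $N>6$ we have $0<s-1<1$, so $f_0'$ is H\"older continuous of exponent $s-1$, giving $|f_0'(u+v)-f_0'(u)|\leq C|v|^{s-1}$; the remaining $\epsilon$-perturbation $[f_\epsilon'-f_0'](u+v)-[f_\epsilon'-f_0'](u)$ is absorbed into $\epsilon|u|^{s-1}$ via (iii) together with the rough upper bound $|u+v|^{s-1}\leq C(|u|^{s-1}+|v|^{s-1})$. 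Assembling the two regimes produces the stated inequality.
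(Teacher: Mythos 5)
The paper does not prove Lemma \ref{zsj}; it is cited from the reference \cite{mp} of Clapp, Pardo, Pistoia and Salda\~{n}a, so there is no in-house argument to compare against. Judged on its own terms, your proposal handles (i)--(iii), (v), (vi) and the $N\leq 6$ branch of (iv) correctly. In (v)--(vi) the decisive move is factoring $\theta|\ln\mu|$ out of $\ln(e+\mu^{-\theta}u)=\theta|\ln\mu|+\ln\bigl(e^{1-\theta|\ln\mu|}+u\bigr)$ before taking the second logarithm, and the one-line bound $0\leq 1-t^{-\epsilon}\leq\epsilon\ln t$ for $t\geq 1$ is exactly what (i) and (iii) require. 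For (iv) with $s\geq 2$, the second-derivative bound $|f''_\epsilon(w)|\leq C|w|^{s-2}$ together with the mean value theorem and $|\xi|^{s-2}\leq C(|u|^{s-2}+|v|^{s-2})$ (valid precisely because $s-2\geq 0$) is the right route.

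The gap is in the $N>6$ (i.e.\ $1<s<2$) branch of (iv). You decompose $f'_\epsilon(u+v)-f'_\epsilon(u)$ into $[f'_0(u+v)-f'_0(u)]$, controlled by H\"older continuity, plus $[f'_\epsilon-f'_0](u+v)-[f'_\epsilon-f'_0](u)$, which you claim is ``absorbed into $\epsilon|u|^{s-1}$'' via (iii) and $|u+v|^{s-1}\leq C(|u|^{s-1}+|v|^{s-1})$. But (iii) produces a factor $s\ln\ln(e+|u|)+\tfrac{1}{\ln(e+|u|)}$ that is unbounded in $u$, and it is not dominated by either $\epsilon|u|^{s-1}$ or $|v|^{s-1}$ uniformly; the triangle inequality alone does not remove it. To close this, one has to estimate $f'_\epsilon(u+v)-f'_\epsilon(u)$ directly: write $f'_\epsilon(u)=s|u|^{s-1}L(u)^{-\epsilon}+R_\epsilon(u)$ with $L(u)=\ln(e+|u|)$ and $|R_\epsilon(u)|\leq\epsilon|u|^{s-1}$, split the main term as
\[
s\bigl(|u+v|^{s-1}-|u|^{s-1}\bigr)L(u+v)^{-\epsilon}
+s|u|^{s-1}\bigl(L(u+v)^{-\epsilon}-L(u)^{-\epsilon}\bigr),
\]
and then combine the trivial bound $\bigl|L(u+v)^{-\epsilon}-L(u)^{-\epsilon}\bigr|\leq 1$ (useful when $|u|\lesssim|v|$, so the prefactor $s|u|^{s-1}$ is already $\leq C|v|^{s-1}$) with the mean-value bound $\bigl|L(u+v)^{-\epsilon}-L(u)^{-\epsilon}\bigr|\leq\epsilon\,\bigl|\ln L(u+v)-\ln L(u)\bigr|$ (useful when $|v|\lesssim|u|$, where the logarithmic increment is genuinely small). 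As written, your sketch leaves this case open.
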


The next lemma concerns the relation of the non-power nonlinearity and power type.
\begin{lemma}\label{yy}
It holds true that
\begin{equation}\label{fepli2}
\bigg|f^{'}_\epsilon\Big(\sum_{i=1}^{k}PV_{i}\Big) -f^{'}_0\Big(\sum_{i=1}^{k}PV_{i}\Big) \bigg|_{L^{\frac{p+1}{p-1}}(\Omega)}
=O \Big(\epsilon\mu^{\frac{Np}{p+1}}
   (\ln|\ln\mu|)\Big),
\end{equation}
and
\begin{equation}\label{pfs1}
\Big|g^{'}_\epsilon(\mathbf{P}U_{\mathbf{d},\boldsymbol\xi})
 -  \sum_{i=1}^{k}g^{'}_0(U_i) \Big|_{L^{\frac{q+1}{q-1}}(\Omega)}
 = O \Big(\epsilon \mu^{\frac{Np}{q+1}}
   (\ln|\ln\mu|)\Big).
\end{equation}
\end{lemma}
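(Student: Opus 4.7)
\noindent\textbf{Proof plan for Lemma \ref{yy}.}
The plan is to derive both bounds by pairing the pointwise estimate of Lemma \ref{zsj}(iii) with the scale invariance of the bubble norms. For (\ref{fepli2}), I set $w=\sum_{i=1}^{k}PV_{i}$ and apply Lemma \ref{zsj}(iii) with $s=p$ to obtain the pointwise inequality
\[
\bigl|f'_{\epsilon}(w)-f'_{0}(w)\bigr|(x)\;\le\;\epsilon\,|w(x)|^{p-1}\Bigl(p\,\ln\ln(e+|w(x)|)+\tfrac{1}{\ln(e+|w(x)|)}\Bigr).
\]
Since $0\le PV_i\le V_i$ and $V_i(x)=\mu_i^{-N/(p+1)}V_{1,0}((x-\xi_i)/\mu_i)$, the pointwise bound $|w(x)|\le C\mu^{-N/(p+1)}$ combined with Lemma \ref{zsj}(v)-(vi) shows that the log-log factor is uniformly $O(\ln|\ln\mu|)$. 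Taking $L^{(p+1)/(p-1)}$-norms and using $\||w|^{p-1}\|_{L^{(p+1)/(p-1)}(\Omega)}=\|w\|_{L^{p+1}(\Omega)}^{p-1}$, together with the scale invariance $\|V_i\|_{L^{p+1}(\Omega)}=O(1)$ that follows from the change of variables $y=(x-\xi_i)/\mu_i$, then yields the desired estimate.

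For (\ref{pfs1}), the plan is to split the difference into three pieces,
\[
g'_{\epsilon}(\mathbf{P}U_{\mathbf{d},\boldsymbol\xi})-\sum_{i=1}^{k}g'_{0}(U_i)=\bigl[g'_{\epsilon}(\mathbf{P}U_{\mathbf{d},\boldsymbol\xi})-g'_{0}(\mathbf{P}U_{\mathbf{d},\boldsymbol\xi})\bigr]+\bigl[g'_{0}(\mathbf{P}U_{\mathbf{d},\boldsymbol\xi})-g'_{0}\bigl(\textstyle\sum_{i}U_i\bigr)\bigr]+\bigl[g'_{0}\bigl(\textstyle\sum_{i}U_i\bigr)-\textstyle\sum_{i}g'_{0}(U_i)\bigr],
\]
and bound each in $L^{(q+1)/(q-1)}(\Omega)$. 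The first bracket is parallel to the argument for (\ref{fepli2}) with $s=q$ and the bubble $U_i$ in place of $V_i$, using the scale invariance $\|U_i\|_{L^{q+1}(\Omega)}=O(1)$; this again yields the $\epsilon(\ln|\ln\mu|)$ factor. The second bracket is handled by the mean value theorem applied to $t\mapsto |t|^{q-1}$ combined with Lemma \ref{puo}, which provides the expansion $\mathbf{P}U_{\mathbf{d},\boldsymbol\xi}-\sum_i U_i=-\mu^{Np/(q+1)}(b_{N,p}/\gamma_N)^p\widetilde{H}_{\mathbf{d},\boldsymbol\xi}+o(\mu^{Np/(q+1)})$; the power of $\mu$ propagates through the norm estimate. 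The third bracket is the multi-bubble interaction term already controlled exactly as in the proof of Lemma \ref{pfS1}, by localizing to the balls $B(\xi_i,r)$ and changing variables.

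The main technical obstacle is the first bracket: after pulling out $\epsilon$ one must integrate the logarithmic factor against the bubble profile. The key step is that although $\ln\ln(e+|w(x)|)$ diverges as $\mu\to 0$ at the peak, its growth is only of the order $\ln|\ln\mu|$ (by Lemma \ref{zsj}(v)), and the remaining factor $|w|^{q-1}$ has a scale-invariant $L^{(q+1)/(q-1)}$ norm, so the constants coming from the concentration profile $U_{1,0}$ or $V_{1,0}$ are finite by the decay in Lemma \ref{ls}. Collecting the three contributions and using $\epsilon\to 0$, the total is absorbed into the asserted bound.
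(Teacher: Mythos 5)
Your argument tracks the paper's own proof closely. For (\ref{fepli2}) both you and the paper apply Lemma \ref{zsj}(iii) pointwise with $s=p$ and use scale invariance of the bubble $L^{p+1}$ norm, with the $\ln|\ln\mu|$ growth coming from Lemma \ref{zsj}(v)--(vi). For (\ref{pfs1}) the paper writes the norm as $I+II$ with $I=|g'_\epsilon(\mathbf{P}U_{\mathbf d,\boldsymbol\xi})-g'_0(\mathbf{P}U_{\mathbf d,\boldsymbol\xi})|_{L^{(q+1)/(q-1)}}$ and $II=|g'_0(\mathbf{P}U_{\mathbf d,\boldsymbol\xi})-\sum_i g'_0(U_i)|_{L^{(q+1)/(q-1)}}$, bounding $II$ in a single stroke by Lemma \ref{pfS1}; your three-way split only spells out the two natural pieces of $II$ separately, so the content is identical. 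The partial bounds you record also coincide with the paper's intermediate ones: the paper itself writes $I=O(\epsilon(\ln|\ln\mu|))$, which matches your first-bracket computation.

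The genuine gap is your closing sentence. Collecting your three contributions gives $O(\epsilon\ln|\ln\mu|)+O(\mu^{Np/(q+1)})$, and this cannot be ``absorbed into'' the asserted $O\bigl(\epsilon\,\mu^{Np/(q+1)}\ln|\ln\mu|\bigr)$: both summands strictly dominate the stated product as $\mu\to 0$, and no relation between $\epsilon$ and $\mu$ is assumed at this stage of the paper. The scale invariance you correctly invoke (change of variables $y=(x-\xi_i)/\mu_i$) necessarily removes any power of $\mu$ from the $\epsilon$-term, so $O(\epsilon\ln|\ln\mu|)$ cannot be sharpened to contain a $\mu^{Np/(q+1)}$ factor. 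Note that the paper's own proof closes with exactly the same unjustified step --- ``Combining $I$ and $II$, the lemma follows immediately'' --- so you have faithfully reproduced the argument; but rather than declaring the total absorbed you should record the weaker (and supportable) bound $O(\epsilon\ln|\ln\mu|+\mu^{Np/(q+1)})$ on the left-hand side, and then check that this suffices wherever the lemma is used (e.g.\ in the estimates of $H_{11}$ and $Q_6$, where it is multiplied by an additional small norm such as $|\Psi_{\mathbf d,\boldsymbol\xi}|_{L^{q+1}(\Omega)}$).
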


\begin{proof}
We have
\begin{align*}
&\Big|g^{'}_\epsilon(\mathbf{P}U_{\mathbf{d},\boldsymbol\xi})
 -  \sum_{i=1}^{k}g^{'}_0(U_i) \Big|_{L^{\frac{q+1}{q-1}}(\Omega)}\\
 = & \Big|g^{'}_\epsilon(\mathbf{P}U_{\mathbf{d},\boldsymbol\xi})
 -  g^{'}_0(\mathbf{P}U_{\mathbf{d},\boldsymbol\xi}) \Big|_{L^{\frac{q+1}{q-1}}(\Omega)}
 +\Big|g^{'}_0(\mathbf{P}U_{\mathbf{d},\boldsymbol\xi})
 -  \sum_{i=1}^{k}g^{'}_0(U_i)\Big|_{L^{\frac{q+1}{q-1}}(\Omega)}
  = I+II.
\end{align*}
The proof of (\ref{fepli2}) and $I$ are  essentially same as  (\ref{onqan}), it holds
\[
I=\Big|g^{'}_\epsilon(\mathbf{P}U_{\mathbf{d},\boldsymbol\xi})
 -  g^{'}_0(\mathbf{P}U_{\mathbf{d},\boldsymbol\xi}) \Big|_{L^{\frac{q+1}{q-1}}(\Omega)}
 =O \Big(\epsilon
   (\ln|\ln\mu|)\Big).
\]
Using   the result of Lemma \ref{pfS1},  we obtain
\begin{align*}
  II= \Big|g^{'}_0(\mathbf{P}U_{\mathbf{d},\boldsymbol\xi})
 -  \sum_{i=1}^{k}g^{'}_0(U_i)\Big|_{L^{\frac{q+1}{q-1}}(\Omega)}
  =  O(\mu^{\frac{Np}{q+1}} ).
\end{align*}
Combining $I$ and $II$,  the lemma follows immediately.
\end{proof}

\begin{lemma}\label{ysy}
There holds
\begin{equation*}\label{su8mbu2}
\bigg|f_0\Big(\sum_{i=1}^{k}PV_{i}\Big)
 -\sum_{i=1}^{k}f_0(PV_{i})\Big) \bigg|_{L^{\frac{p+1}{p}}(\Omega)}
=O (\mu^{(N-2)p}).
\end{equation*}
and
\begin{equation*}\label{sumbu2}
\bigg|f^{'}_0\Big(\sum_{i=1}^{k}PV_{i}\Big)
 -\sum_{i=1}^{k}f^{'}_0(PV_{i})\Big) \bigg|_{L^{\frac{p+1}{p-1}}(\Omega)}
=O (\mu^{(N-2)p}).
\end{equation*}
\end{lemma}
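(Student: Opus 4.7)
The plan is to reduce both estimates to pairwise bubble-interaction integrals and then control each by localizing on small disjoint balls around the concentration points. First I would exploit positivity: since all $PV_i > 0$ I may drop the absolute values, so the first estimate reduces, via the elementary inequality
\[
\left|\left(\sum_{i=1}^k a_i\right)^p - \sum_{i=1}^k a_i^p\right| \leq C \sum_{i \neq j}\left(a_i^{p-1} a_j + a_i\, a_j^{p-1}\right), \qquad (p \geq 1,\; a_i \geq 0),
\]
obtained by iteration from $(a+b)^p - a^p - b^p \lesssim a^{p-1}b + ab^{p-1}$, to estimating $|PV_i^{p-1} PV_j|_{L^{(p+1)/p}}$. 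For the second estimate, since $p - 1 \in (0, 1)$ in the range $p \in (1, (N-1)/(N-2))$, the map $x \mapsto x^{p-1}$ is concave and subadditive; a mean-value-theorem computation then yields
\[
\left|\left(\sum_{i=1}^k a_i\right)^{p-1} - \sum_{i=1}^k a_i^{p-1}\right| \leq C \sum_{i \neq j}\min(a_i, a_j)^{p-1}.
\]

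Next I would localize. Choosing $r_0 \in (0, \delta_2/3)$ makes the balls $B_\ell := B(\xi_\ell, r_0)$ pairwise disjoint. For $x \in B_i$ and $j \neq i$ one has $|x - \xi_j| \geq \delta_2 - r_0 \gtrsim 1$; combining this with the decay estimate in Lemma \ref{ls} applied to $V_j(x) = \mu_j^{-N/(p+1)} V_{1,0}((x-\xi_j)/\mu_j)$ and the critical-hyperbola identity $N - 2 - N/(p+1) = N/(q+1)$ gives the uniform bound
\[
PV_j(x) \leq V_j(x) \leq C\mu^{N/(q+1)} \quad \text{on } B_i,
\]
and similarly every bubble is $O(\mu^{N/(q+1)})$ on $\Omega \setminus \bigcup_\ell B_\ell$.

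For each pair $(i,j)$ I would then split $\int_\Omega = \int_{B_i} + \int_{B_j} + \int_{\Omega \setminus (B_i \cup B_j)}$. On $B_i$ the uniform bound on $PV_j$ is pulled out, and rescaling $y = (x - \xi_i)/\mu_i$ reduces the remaining factor to
\[
\int_{B_i} V_i^{(p-1)(p+1)/p}\,dx = \mu_i^{N/p} \int_{B(0, r_0/\mu_i)} V_{1,0}^{(p^2-1)/p}(y)\,dy,
\]
whose inner integral is controlled by the $|y|^{-(N-2)}$ decay in Lemma \ref{ls} together with the cutoff $r_0/\mu_i$. The $B_j$ contribution is symmetric, and on the exterior both bubbles are uniformly small, giving a negligible contribution. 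Summing over the $k(k-1)$ pairs, raising to the appropriate $L^r$-power, and simplifying via the hyperbola yields the claimed $O(\mu^{(N-2)p})$ bound.

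The main obstacle is the careful bookkeeping of exponents in the divergent regime: in the paper's parameter range $\int_{\mathbb{R}^N} V_{1,0}^{(p^2-1)/p}\,dy$ diverges at infinity, so the cutoff $r_0/\mu_i$ contributes a polynomial factor in $1/\mu$ that must be balanced against the prefactor $\mu_i^{N/p}$. For the derivative estimate the singular exponent $p - 2 < 0$ must be handled by the $\min$-form of the pointwise inequality, which trades the dangerous term $PV_i^{p-2} PV_j$ (blowing up where $PV_i$ is small) for $PV_j^{p-1}$ (which is merely a small constant on $B_i$).
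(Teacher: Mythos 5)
Your strategy — localize on disjoint balls around the concentration points, reduce to pairwise cross-terms via elementary pointwise inequalities (subadditivity and the $\min$-bound where the paper applies the mean value theorem directly), rescale, and handle the divergent tail of $V_{1,0}^{(p^2-1)/p}$ via a cutoff — is essentially the paper's, and your identification of the divergence at infinity is correct. (One small remark: on $B_i$, where $PV_i\gg PV_j$, the $\min$-bound reduces to $PV_j^{p-1}$, while the mean-value bound already satisfies $PV_i^{p-2}PV_j\leq PV_j^{p-1}$, so the $\min$ form does not actually improve the asymptotics.)

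The genuine gap is the closing assertion that the bookkeeping ``yields the claimed $O(\mu^{(N-2)p})$.'' Carrying the exponents through: the uniform bound $PV_j\lesssim\mu^{N/(q+1)}$ on $B_i$, the rescaling prefactor $\mu_i^{N/p}$, and the cutoff factor $(r_0/\mu_i)^{N-(N-2)(p^2-1)/p}$ combine, via the identities on the critical hyperbola, to give $\int_{B_i}|V_i^{p-1}V_j|^{(p+1)/p}\,dx=O(\mu^{(N-2)p-2})$, so the $L^{(p+1)/p}$ norm is $O(\mu^{Np/(q+1)})$, and the derivative estimate correspondingly yields $O(\mu^{N(p-1)/(q+1)})$ in $L^{(p+1)/(p-1)}$. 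Since $\frac{Np}{q+1}=(N-2)p-\frac{Np}{p+1}<(N-2)p$ for every $N\geq3$ and $1<p<\frac{N-1}{N-2}$, both bounds are strictly weaker than the stated $O(\mu^{(N-2)p})$. This is not a defect in your execution relative to the paper's: the paper's own computation stops at $\int_{B_i}|V_i^{p-2}V_j|^{(p+1)/(p-1)}\,dx\leq C\mu^{(N-2)p-1}$ — a bound on the integral rather than on the norm, reached through an intermediate factor $\mu^{(N-2)(p-2)}$ whose exponent is negative for $p<2$ — and this likewise does not produce the claimed rate. Without cancellation beyond what is visible at the pointwise level, the method you and the paper both take delivers only $O(\mu^{Np/(q+1)})$, matching the companion estimate in Lemma \ref{pfS1}, not $O(\mu^{(N-2)p})$; so ``careful bookkeeping'' alone cannot close this gap.
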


\begin{proof}
We will estimate equality, and the first one is similar, we omit the details here.
Observe that
\begin{align*}
 & \int_\Omega\Big|  f^{'}_0\Big(\sum_{i=1}^{k}PV_{i}\Big)-\sum\limits_{i=1}^k f^{'}_0(PV_i) \Big|^{\frac{p+1}{p-1}}dx\\
  = & \int_{\Omega\setminus \cup_{i=1 }^k B_i}
  \Big|\Big(\sum_{i=1}^{k}PV_{i}\Big)^{p-1}
  - \sum\limits_{i=1}^k(PV_i)^{p-1} \Big|^{\frac{p+1}{p-1}}dx
  +\sum\limits_{i=1}^k\int_{B_i}\Big|\Big(\sum_{i=1}^{k}PV_{i}\Big)^{p-1}
  - \sum\limits_{i=1}^k(PV_i)^{p-1} \Big|^{\frac{p+1}{p-1}}dx.
\end{align*}
The first term follows directly from the facts (\ref{bubble}) that
\begin{align*}
    \int_{\Omega\setminus \cup_{i=1 }^k B_i}
    \Big|\Big(\sum_{i=1}^{k}PV_{i}\Big)^{p-1}
  - \sum\limits_{i=1}^k(PV_i)^{p-1} \Big|^{\frac{p+1}{p-1}}dx
 \leq  \sum\limits_{i=1}^k\int_{\Omega\setminus \cup_{i=1 }^k B_i}
 V_i^{(p-1)\frac{p+1}{p-1}}dx
 =o(\mu_j^{-N}).
\end{align*}
For any $i$, by the mean value theorem,  there exists
$t = t(x)\in[0, 1] $ such that
\begin{align*}
 & \int_{B_i}\Big|\Big(\sum_{i=1}^{k}PV_{i}\Big)^{p-1}
  - \sum\limits_{i=1}^k(PV_i)^{p-1} \Big|^{\frac{p+1}{p-1}}dx\\
  = & \int_{B_i}\Big|\Big( PV_i+\sum\limits_{j\neq i}^k
  PV_j\Big)^{p-1}
  - (PV_i)^{p-1}-\sum\limits_{j\neq i}^k(PV_j)^{p-1} \Big|^{\frac{p+1}{p-1}}dx  \\
 \leq &  C\int_{B_i}\Big|\Big( PV_i+t\sum\limits_{j\neq i}^k PV_j\Big)^{p-2}\sum\limits_{j\neq i}^k PV_j\Big|^{\frac{p+1}{p-1}}dx
 +C\sum\limits_{j\neq i}^k\int_{B_i} |PV_j|^{(p-1)\frac{p+1}{p-1} } dx  \\
 \leq &  C\int_{B_i}\Big|( PV_i)^{p-2}\sum\limits_{j\neq i}^k PV_j\Big|^{\frac{p+1}{p-1}}dx
 +C\sum\limits_{j\neq i}^k\int_{B_i} |PV_j|^{p+1} dx  \\
 \leq &  C\sum\limits_{j\neq i}^k\int_{B_i}\Big|V_i^{p-2} V_j\Big|^{\frac{p+1}{p-1}}dx
 +C\sum\limits_{j\neq i}^k\int_{B_i} |V_j|^{p+1} dx.
\end{align*}
Since $j\neq i$, using the notation introduced in (\ref{bubble}),  then
\begin{align*}
 \int_{B_i} |V_j|^{p+1} dx
  \leq   C \int_{B_i}
\bigg( \mu_j^{-\frac{N}{p+1}}V_{1,0} (\frac{x-\xi_j}{\mu_j} )\bigg)^{p+1}dx
 =o(\mu_j^{-N}).
\end{align*}
In view of  Lemma \ref{ls} and (\ref{bubble}),   let $x-\xi_i=\mu_i y$, then
\begin{align*}
   \int_{B_i}\Big|V_i^{p-2} V_j\Big|^{\frac{p+1}{p-1}}dx
   \leq & C \int_{B_i}
\bigg| \bigg(\mu_i^{-\frac{N}{p+1}}V_{1,0} (\frac{x-\xi_i}{\mu_i} )\bigg)^{p-2}
\mu_j^{-\frac{N}{p+1}}V_{1,0} (\frac{x-\xi_j}{\mu_j} )\bigg|^{\frac{p+1}{p-1}}
dx  \\
    =  &  C\mu_i^{N-\frac{(p-2)N}{p+1}\frac{p+1}{p-1}} \mu_j^{\frac{-N}{p+1}\frac{p+1}{p-1}}
    \int_{\frac{B_i-\xi_i}{\mu_i}}\Big|
 V^{p-2}_{1,0}(y)
 V_{1,0} (\frac{\mu_i y+\xi_i-\xi_j}{\mu_j} )\Big|^{\frac{p+1}{p-1}}dy\\
  \leq & C\mu_i^{N-\frac{(p-2)N}{p-1}}
   \mu_j^{-\frac{N}{p-1}}\mu^{(N-2)(p-2)}
    \leq   C\mu^{(N-2)p-1}.
\end{align*}
Thus,  Lemma \ref{sumbu2} holds.
\end{proof}

\begin{lemma}\label{pf1}
For  $j=1,\cdots,k$, there holds
\begin{eqnarray*}
Q_3 &  = &
- \int_\Omega \Big[ g_\epsilon(\mathbf{P}U_{\mathbf{d},\boldsymbol\xi})
 -g_0(\mathbf{P}U_{\mathbf{d},\boldsymbol\xi}) \Big]\Psi_{jh}dx \nonumber\\
   &  = &
\left\{ \arraycolsep=1.5pt
   \begin{array}{lll}
  -\frac{q+1}{N} \mathcal{A}_1
  \sum\limits_{i=1}^k \frac\epsilon{|\ln\mu_i|}
  + O\Big(\sum\limits_{i=1}^k \frac\epsilon{|\ln\mu_i|}
  +\epsilon\mu^{\frac{Np}{q+1}} \ln|\ln\mu|\Big)\ \   \ & {\rm if}\  h=0, \\[2mm]
  O\Big(\epsilon\mu^{\frac{Np}{q+1}} \ln|\ln\mu|\Big)\ \  \  & {\rm if}\  h=1,\cdots,N, \\[2mm]
\end{array}
\right.
\end{eqnarray*}
where $\mathcal{A}_1$  is given  in  Proposition \ref{leftside}.
\end{lemma}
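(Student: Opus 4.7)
The plan is to start from the pointwise expansion
$g_\epsilon(u)-g_0(u)=u^q\bigl([\ln(e+u)]^{-\epsilon}-1\bigr)=-\epsilon\,u^q\ln\ln(e+u)+O\!\bigl(\epsilon^{2}u^{q}(\ln\ln(e+u))^{2}\bigr)$,
obtained from $[\ln(e+u)]^{-\epsilon}=e^{-\epsilon\ln\ln(e+u)}$. Inserting this reduces $Q_{3}$ to $\epsilon\int_{\Omega}(\mathbf{P}U_{\mathbf{d},\boldsymbol\xi})^{q}\ln\ln(e+\mathbf{P}U_{\mathbf{d},\boldsymbol\xi})\Psi_{jh}\,dx$, which I localize on the disjoint balls $B_{i}=B(\xi_{i},r)$ for a small fixed $r>0$ and their complement. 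The complement contributes negligibly since both $\mathbf{P}U$ and $\ln\ln(e+\mathbf{P}U)$ are polynomially small there. On $B_{i}$ with $i\neq j$, Lemma \ref{puo} gives $\mathbf{P}U\approx U_{i}$, while $\Psi_{jh}$ is evaluated far from its centre and controlled by the far-field asymptotics in Lemma \ref{ls}; after rescaling $x=\xi_{i}+\mu_{i}y$ the contribution is of order $\epsilon\mu_{i}^{N/(q+1)}\mu_{j}^{Np/(q+1)}\ln|\ln\mu|=O(\epsilon\mu^{(N-2)p-2}\ln|\ln\mu|)$, which is absorbed into $\epsilon\mu^{Np/(q+1)}\ln|\ln\mu|$ since $(N-2)p-2-Np/(q+1)=N/(q+1)>0$.

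The main term comes from $B_{j}$. The change of variables $x=\xi_{j}+\mu_{j}y$ makes all scaling weights cancel exactly, and after replacing $\mathbf{P}U$ by $U_{j}$ (a relative error of order $\mu^{Np/(q+1)}$ by Lemma \ref{uaoe}), the integral becomes
\begin{equation*}
\int_{\{|y|\le r/\mu_j\}}U_{1,0}^{q}(y)\,\Psi_{1,0}^{h}(y)\,\ln\ln\!\bigl(e+\mu_{j}^{-N/(q+1)}U_{1,0}(y)\bigr)\,dy.
\end{equation*}
I then apply Lemma \ref{zsj}(v)--(vi) with $\theta=N/(q+1)$ to expand
\begin{equation*}
\ln\ln\!\bigl(e+\mu_{j}^{-\theta}U_{1,0}(y)\bigr)=\ln|\ln\mu_{j}|+\ln\theta+\frac{q+1}{N|\ln\mu_{j}|}\ln U_{1,0}(y)+o\!\bigl(|\ln\mu_{j}|^{-1}\bigr).
\end{equation*}
The crucial cancellation is $\int_{\mathbb{R}^{N}}U_{1,0}^{q}\Psi_{1,0}^{0}\,dy=0$, which I verify by integration by parts: $\int U_{1,0}^{q}\,y\cdot\nabla U_{1,0}\,dy=-\frac{N}{q+1}\int U_{1,0}^{q+1}\,dy$ cancels exactly the $\frac{N}{q+1}U_{1,0}$ piece of $\Psi_{1,0}^{0}=y\cdot\nabla U_{1,0}+\frac{N}{q+1}U_{1,0}$. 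For $h=0$ this kills the divergent $\ln|\ln\mu_j|$ and $\ln\theta$ contributions, leaving $\frac{q+1}{N|\ln\mu_{j}|}\int U_{1,0}^{q}\Psi_{1,0}^{0}\ln U_{1,0}\,dy=-\frac{(q+1)\mathcal{A}_1}{N|\ln\mu_j|}$ by the definition of $\mathcal{A}_1$. Restoring the $-\epsilon$ factor from the expansion of $g_\epsilon-g_0$ and the outer sign in $Q_{3}$ yields $Q_3=-\frac{(q+1)\mathcal{A}_1\epsilon}{N|\ln\mu_j|}+(\text{error})$; the form $-\frac{q+1}{N}\mathcal{A}_1\sum_{i=1}^{k}\epsilon/|\ln\mu_i|$ written in the statement differs only by $\sum_{i\neq j}\epsilon/|\ln\mu_i|$, which is inside the stated remainder $O(\sum_{i}\epsilon/|\ln\mu_i|)$.

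For $h=1,\dots,N$ the argument collapses by parity: $\Psi_{1,0}^{h}=\partial_{y_h}U_{1,0}$ is odd in $y_h$ while $U_{1,0}^{q}$ and $\ln U_{1,0}$ are radial, so \emph{both} $\int U_{1,0}^{q}\Psi_{1,0}^{h}\,dy$ and $\int U_{1,0}^{q}\Psi_{1,0}^{h}\ln U_{1,0}\,dy$ vanish, and only the remainder survives, giving $O(\epsilon\mu^{Np/(q+1)}\ln|\ln\mu|)$. The main obstacle will be making the log-log expansion quantitative: the $\ln|\ln\mu_j|$ term is logarithmically divergent, so the orthogonality $\int U_{1,0}^{q}\Psi_{1,0}^{0}\,dy=0$ must be used exactly rather than asymptotically, and the $o(|\ln\mu_j|^{-1})$ piece from Lemma \ref{zsj}(vi) must be made effective in order to fit all secondary errors---the replacement of $\mathbf{P}U$ by $U_j$, the truncation of the $\epsilon$-Taylor series at $O(\epsilon^2)$, boundary effects from the finite cutoff $r$, and the $i\neq j$ balls---into the declared remainder $O\bigl(\sum_{i}\epsilon/|\ln\mu_i|+\epsilon\mu^{Np/(q+1)}\ln|\ln\mu|\bigr)$.
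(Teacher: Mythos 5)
Your proposal is correct and follows essentially the same route as the paper: Taylor expansion of $g_\epsilon - g_0$ in $\epsilon$, localization onto the balls $B_i$ with the main contribution from $B_j$, the $\ln\ln$ expansion of Lemma~\ref{zsj}(v)--(vi), the exact orthogonality $\int_{\mathbb{R}^N}U_{1,0}^q\Psi_{1,0}^0\,dy=0$ to kill the divergent $\ln|\ln\mu|$ term, and oddness of $\Psi_{1,0}^h$ for $h\geq1$. Your direct integration-by-parts verification of that orthogonality is in fact cleaner than the paper's (which passes through the coupled identity $\int(V^p\Phi^0+U^q\Psi^0)=\int(qU^q\Psi^0+pV^p\Phi^0)$ and then asserts the vanishing of each piece without closing the gap), and you rightly note that the $\sum_i\epsilon/|\ln\mu_i|$ in the stated main term is, modulo the declared remainder, really only the $i=j$ contribution; the only slip is a citation: the $O(\mu^{Np/(q+1)})$ error in replacing $\mathbf{P}U_{\mathbf{d},\boldsymbol\xi}$ by $U_j$ on $B_j$ is Lemma~\ref{puo}, not Lemma~\ref{uaoe}.
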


\begin{proof}
By Taylor  expansion with respect to $\epsilon$, we have
\begin{align}\label{mrfa}
Q_3 = &
- \int_\Omega \Big[ g_\epsilon(\mathbf{P}U_{\mathbf{d},\boldsymbol\xi})
 -g_0(\mathbf{P}U_{\mathbf{d},\boldsymbol\xi}) \Big]\Psi_{jh}dx \nonumber\\
= & \int_\Omega \Big[ g_0(\mathbf{P}U_{\mathbf{d},\boldsymbol\xi})
 -g_\epsilon(\mathbf{P}U_{\mathbf{d},\boldsymbol\xi}) \Big]\Psi_{jh}dx \nonumber\\
 = & \epsilon\int_\Omega  (\mathbf{P}U_{\mathbf{d},\boldsymbol\xi})^q\ln\ln(e+\mathbf{P}U_{\mathbf{d},\boldsymbol\xi})\Psi_{jh}dx
  -\epsilon^2\int_\Omega (\mathbf{P}U_{\mathbf{d},\boldsymbol\xi})^q
   \Big(\ln\ln(e+\mathbf{P}U_{\mathbf{d},\boldsymbol\xi})\Big)^2
  \Psi_{jh}dx.
\end{align}
We split the second   integral and  from Lemma \ref{zsj},
\begin{align}\label{ma1}
   \int_\Omega \bigg| (\mathbf{P}U_{\mathbf{d},\boldsymbol\xi})^q
   \Big(\ln\ln(e+\mathbf{P}U_{\mathbf{d},\boldsymbol\xi})\Big)^2
  \Psi_{jh}\bigg|dx
  \leq & \int_\Omega \Big|\Big(\sum\limits_{i=1}^k U_i\Big)^q\Big[\ln\ln\Big(e+\sum\limits_{i=1}^k U_i\Big)\Big]^2
  \Psi_{jh}\Big|dx\nonumber\\
 = & \sum\limits_{i=1}^k\int_{B_i} \bigg|\Big(\sum\limits_{i=1}^k U_i\Big)^q
 \Big[\ln\ln\Big(e+\sum\limits_{i=1}^k U_i\Big)\Big]^2
  \Psi_{jh}\bigg|dx
  + o(\mu^{\frac{Np}{q+1}}).
\end{align}
We now estimate the integral over   $B_i$,
it is convenient to use the change of variables $\mu_iy=x-\xi_i$,
for $h=0$, by (\ref{pssi}), (\ref{psi}), (\ref{bubble}) and Lemma \ref{ls},  then
\begin{align*}
   &  \int_{B_i}\bigg|\Big(  U_i+\sum\limits_{j\neq i}^k U_j\Big)^q
  \Big[\ln\ln\Big(e+  U_i+\sum\limits_{j\neq i}^k U_j\Big)\Big]^2
  \Psi^0_j\bigg|dx\nonumber\\
   = &   \mu_j^{-\frac{N}{q+1}}\int_{\frac{B_i-\xi_i}{\mu_i}}
  \bigg|\mu_i^{-\frac{N}{q+1}}U_{1,0}(y)
  + \sum\limits_{j\neq i}^k\mu_j^{-\frac{N}{q+1}}
  U_{1,0}(\frac{\mu_iy+\xi_i-\xi_j}{\mu_j})
   \bigg|^q\nonumber\\
  & \times  \bigg|\ln\ln\Big[e+\mu_i^{-\frac{N}{q+1}}U_{1,0}(y)
  + \sum\limits_{j\neq i}^k\mu_j^{-\frac{N}{q+1}}
  U_{1,0}(\frac{\mu_iy+\xi_i-\xi_j}{\mu_j})\Big]\bigg|^2 \bigg|\Psi_{1,0}^0(\frac{\mu_iy+\xi_i-\xi_j}{\mu_j})
  \bigg|dy\nonumber\\
 = &  \mu_i^{-\frac{Nq}{q+1}+N}\mu_j^{-\frac{N}{q+1}}
 \int_{\frac{B_i-\xi_i}{\mu_i}} U_{1,0}^q(y)
 \Big[\ln\ln\Big(e+\mu_i^{-\frac{N}{q+1}}U_{1,0}(y)
  + o(1)\Big)\Big]^2 \nonumber\\
 & \times
  \Big[\frac{\mu_iy+\xi_i-\xi_j}{\mu_j}
  \cdot\nabla U_{1,0}(\frac{\mu_iy+\xi_i-\xi_j}{\mu_j})
   +\frac{N }{q+1} U_{1,0}(\frac{\mu_iy+\xi_i-\xi_j}{\mu_j})\Big]dy \nonumber\\
 =    &  O\Big(( \ln|\ln\mu|)^2\Big).
\end{align*}
Using  the similar calculations as above,  for $h=1,\cdots,N$, we have
\begin{align*}
    \int_{B_i}\bigg|\Big(  U_i+\sum\limits_{j\neq i}^k U_j\Big)^q
  \Big[\ln\ln\Big(e+  U_i+\sum\limits_{j\neq i}^k U_j\Big)\Big]^2
  \Psi_{jh}\bigg|dx
   = O\Big(( \ln|\ln\mu|)^2\Big).
\end{align*}
Thus the second term in (\ref{mrfa}) becomes
\begin{align*}
\int_\Omega \Big|(\mathbf{P}U_{\mathbf{d},\boldsymbol\xi})^q
   \Big(\ln\ln(e+\mathbf{P}U_{\mathbf{d},\boldsymbol\xi})\Big)^2
  \Psi_{jh}\Big|dx
 = O\Big( (\ln|\ln\mu| )^2\Big).
\end{align*}
It follows that
\begin{align}\label{ma}
 Q_3 = &
- \int_\Omega \Big[ g_\epsilon(\mathbf{P}U_{\mathbf{d},\boldsymbol\xi})
 -g_0(\mathbf{P}U_{\mathbf{d},\boldsymbol\xi}) \Big]\Psi_{jh}dx \nonumber\\
   = & \epsilon\int_\Omega (\mathbf{P}U_{\mathbf{d},\boldsymbol\xi})^q
   \Big(\ln\ln(e+\mathbf{P}U_{\mathbf{d},\boldsymbol\xi})\Big)\Psi_{jh}dx
   +O\Big( (\epsilon\ln|\ln\epsilon| )^2\Big).
\end{align}
So it suffices to compute
\begin{align}\label{iam}
  &   \int_\Omega (\mathbf{P}U_{\mathbf{d},\boldsymbol\xi})^q\Big(\ln\ln(e+\mathbf{P}U_{\mathbf{d},\boldsymbol\xi})\Big)
      \Psi_{jh}dx\nonumber\\
  = & \int_\Omega \Big(\sum\limits_{i=1}^kU_i\Big)^q
  \Big[\ln\ln\Big(e+\sum\limits_{i=1}^k  U_i \Big)\Big]\Psi_{jh}dx\nonumber\\
    &   -\bigg[
    \int_\Omega  \Big(\sum\limits_{i=1}^k U_i\Big)^q  \Big[\ln\ln\Big(e+ \sum\limits_{i=1}^k U_i\Big) \Big]
    -  (\mathbf{P}U_{\mathbf{d},\boldsymbol\xi})^q  \Big(\ln\ln(e+ \mathbf{P}U_{\mathbf{d},\boldsymbol\xi})\Big)\bigg]
         \Psi_{jh}dx.
\end{align}
Let us set $h(u)=u^q\ln\ln(e+u)$,  by the mean value theorem,  one has
\[
0\leq h(u)-h(v)\leq Cu^{q-1}\Big(\ln\ln(e+u)+1\Big)(u-v)\quad\mbox{for}\ 0\leq v\leq u.
\]
Then,  the second term in (\ref{iam})  takes the form
\begin{align*}
  &  \int_\Omega  \Big(\sum\limits_{i=1}^k U_i\Big)^{q-1} \bigg(\ln\ln\Big(e+ \sum\limits_{i=1}^k U_i  \Big)+1\bigg)
       \bigg(\sum\limits_{i=1}^k U_i- \mathbf{P}U_{\mathbf{d},\boldsymbol\xi}\bigg) \Psi_{jh}dx\\
  = & \sum\limits_{i=1}^k \int_{B_i}
     \Big(\sum\limits_{i=1}^k U_i\Big)^{q-1}
     \bigg(\ln\ln\Big(e+ \sum\limits_{i=1}^k U_i  \Big)+1\bigg)
       (\sum\limits_{i=1}^kU_i -  \mathbf{P}U_{\mathbf{d},\boldsymbol\xi}) \Psi_{jh}dx
       + o(1).
\end{align*}
Let us consider now the region  $B_i$, if $h=0$, in the variable  setting $\mu_iy=x-\xi_i$,
by (\ref{psi}),  Lemmas \ref{puo}  and \ref{zsj},
we have
\begin{eqnarray*}
  &&   \int_{B_i}\bigg|
     \Big( U_i(x)+  \sum_{j\neq i}^kU_j(x)\Big)^{q-1}
     \bigg(\ln\ln\Big(e+  U_i(x)+  \sum_{j\neq i}^kU_j(x)\Big)+1\bigg)\\
     &\times&
     \Big(\sum\limits_{i=1}^kU_i(x) -  \mathbf{P}U_{\mathbf{d},\boldsymbol\xi}(x)\Big) \Psi^0_j (x)\bigg|dx \\
 & = & \mu^{\frac{Np}{q+1}}
\Big(\frac{b_{N,p}}{\gamma_N}\Big)^{p-1}
   \int_{B_i}\bigg|    U_i^{q-1}(x)
     \Big[\ln\ln \Big(e+  U_i(x) \Big)+1\Big]
      \widetilde{H}_{\mathbf{d},\boldsymbol{\xi}}(x)
       \Psi^0_j (x)\bigg|dx
        + o(1)\\
 & = &\Big(\frac{b_{N,p}}{\gamma_N}\Big)^{p-1}\mu^{\frac{Np}{q+1}}
\mu_j^{-\frac{N}{q+1}}\mu_i^N
  \int_{\frac{B_i-\xi_i}{\mu_i}}
  \bigg|\mu_i^{-\frac{N(q-1)}{q+1}}U^{q-1}_{1,0}(y)
  \Big(\ln\ln \Big (e+ \mu_i^{-\frac{N}{q+1}}U_{1,0}(y))+1\Big) \\
 & \times& \Big( \widetilde{H}_{\mathbf{d},\boldsymbol{\xi}}(\xi_i+\mu_i y)+o(1)\Big)
 \Psi_{1,0}^0(\frac{\mu_iy+\xi_i-\xi_j}{\mu_j}) \bigg|dy
 +o(1)\\
 & \leq & \left\{ \arraycolsep=1.5pt
   \begin{array}{lll}
     \mu^{\frac{Np}{q+1}}
\Big(\frac{b_{N,p}}{\gamma_N}\Big)^{p-1}
\widetilde{H}_{\mathbf{d},\boldsymbol{\xi}}(\xi_i)
\ln|\ln\mu_i|+o(1)
\ \   {\rm if}\ j=i, \\[2mm]
\Big(\frac{b_{N,p}}{\gamma_N}\Big)^{p-1}\mu^{\frac{Np}{q+1}}
\mu_j^{-\frac{N}{q+1}}\mu_i^{N-\frac{N(q-1)}{q+1}}
\widetilde{H}_{\mathbf{d},\boldsymbol{\xi}}(\xi_i)+o(1)
\ \   {\rm if}\ j\neq i, \\[2mm]
  \end{array}
\right.\\
& = &
  O\Big(\mu^{\frac{Np}{q+1}} \ln|\ln\mu|\Big).
\end{eqnarray*}
Notice that
 by the same argument, for $h=1,\cdots,N$, we have
\begin{align*}
   \int_{B_i}\bigg|
     \Big( U_i+ \sum_{j\neq i}^kU_j \Big)^{q-1}
     \bigg(\ln\ln\Big(e+ U_i+ \sum_{j\neq i}^kU_j\Big )+1\bigg)
     (\sum\limits_{i=1}^kU_i -  \mathbf{P}U_{\mathbf{d},\boldsymbol\xi})\Psi_{jh}\bigg|dx
 =      O\Big(\mu^{\frac{Np}{q+1}} \ln|\ln\mu|\Big).
\end{align*}
Therefore,  we rewrite the second term  in (\ref{iam})  as
\begin{align*}
 &   \int_\Omega \bigg| \Big(\sum\limits_{i=1}^k U_i\Big)^q\bigg(\ln\ln\Big(e+ \sum\limits_{i=1}^k U_i\Big)  \bigg)
          \Psi_{jh}\bigg|dx
 - \int_\Omega  \Big|(\mathbf{P}U_{\mathbf{d},\boldsymbol\xi})^q \ln\ln(e+ \mathbf{P}U_{\mathbf{d},\boldsymbol\xi})
         \Psi_{jh}\Big| dx\\
   = &  O\Big(\mu^{\frac{Np}{q+1}} \ln|\ln\mu|\Big).
\end{align*}

Now, we split the first term in (\ref{iam})  into two parts, and
   if $j=i$, from Lemma \ref{zsj}, (\ref{psi}), (\ref{pssi}),
let  $x-\xi_i=\mu_iy$,
then
\begin{align}\label{maa}
  &   \int_\Omega \bigg| \Big(\sum\limits_{i=1}^kU_i(x) \Big)^q\Big[\ln\ln \Big(e+\sum\limits_{i=1}^k
        U_i(x) \Big)\Big]\Psi^h_i (x) \bigg|dx\nonumber\\
   = & \sum\limits_{i=1}^k \int_{B_i}
    \bigg|\Big(U_i(x)+ \sum\limits_{j\neq i}^k U_j(x) \Big)^q
    \Big[\ln\ln\Big(e+   U_i(x)+\sum\limits_{j\neq i}^k U_j(x)  \Big)\Big]\Psi^h_i(x) \bigg|dx
   +  o(1)\nonumber\\
  = & \sum\limits_{i=1}^k
  \mu_i^{N-\frac{N}{q+1}}\mu_i^{-\frac{Nq}{q+1}}
  \int_{\frac{B_i-\xi_i}{\mu_i}}U^q_{1,0}(y)\nonumber\\
 &\times\bigg|\ln\ln\bigg(e+ \mu_i^{-\frac{N}{q+1}}U_{1,0}(y)
  + \sum\limits_{j\neq i}^k\mu_j^{-\frac{N}{q+1}}
  U_{1,0}(\frac{\mu_iy+\xi_i-\xi_j}{\mu_j})
    \bigg)
  \Psi^h_{1,0}(y)\bigg|dy
  +  o(1)\nonumber\\
  = &\sum\limits_{i=1}^k \ln\Big|\ln\mu_i^{ -\frac{N}{q+1}}\Big|
  \int_{\frac{B_i-\xi_i}{\mu_i}} U^q_{1,0}(y)\Psi^h_{1,0}(y)dy\nonumber\\
 &  + \sum\limits_{i=1}^k\frac{1}{|\ln \mu_i|} \int_{\frac{B_i-\xi_i}{\mu_i}}
  U^q_{1,0}(y)
  \bigg[|\ln  \mu_i|\ln
     \bigg(1+\frac{ \ln \Big[e^{1-\frac{N}{q+1}|\ln \mu_i|}+U_{1,0}(y)\Big]}{\frac{N}{q+1}
     |\ln \mu_i|}\bigg)\bigg]\Psi^h_{1,0}(y)dy
     + o(1).
\end{align}
Moreover, we set
$\Lambda(y)=U^q_{1,0}(y) |\ln\mu_i|\ln
     \bigg(1+\frac{ \ln \Big(e^{1-\frac{N}{q+1}|\ln  \mu_i|}+U_{1,0}(y)\Big)}{\frac{N}{q+1}
     |\ln  \mu_i|}\bigg)\Psi_{1,0}^h(y)$ for
$h=1,\cdots,N$.
Since $\Psi_{1,0}^h(y)$ is a odd function,
we deduce  $\int_{\mathbb{R}^N}\Lambda(y)dy=0$.
Further,  Lemma \ref{zsj}  yields that
\begin{align*}
 &  \int_{\mathbb{R}^N}\Lambda(y)dy-\sum\limits_{i=1}^k\int_{\frac{B_i-\xi_i}{\mu_i}}\Lambda(y)dy
 =\int_{\mathbb{R}^N \backslash {\{\cup_{i=1 }^k \frac{B_i-\xi_i}{\mu_i}}\}}\Lambda(y)dy\\
  \leq & C  |\ln \mu_i |\ln
     \bigg(1+\frac{ \ln \Big(e^{1-\frac{N}{q+1}|\ln \mu_i|}+ U_{1,0}(y) \Big)}{\frac{N}{q+1}|\mu_i|}\bigg)
     \int_{\mathbb{R}^N \backslash {\{\cup_{i=1 }^k \frac{B_i-\xi_i}{\mu_i}}\}}
     U^q_{1,0}(y)  |\Psi_{1,0}^h(y)|dy\\
   \leq & C\Big(\frac{q+1}{N}\ln U_{1,0}+o(1)\Big)\mu_i^{[(N-2)p-2]q}
   =O(\mu^{[(N-2)p-2]q}).
\end{align*}
Hence, for $ \mu$ small enough, we conclude that
\begin{align}\label{ma2}
  &  \int_\Omega \Big (\sum\limits_{i=1}^kU_i\Big )^q\ln\ln \Big(e+\sum\limits_{i=1}^k
        U_i  \Big)\Psi_{jh}dx
        =O(\mu^{[(N-2)p-2]q}),
        \quad
   \mbox{for}\
h=1,\cdots,N.
\end{align}

When  $h=0$, since $(U_i,V_i) $ is the unique positive solution of problem  (\ref{uni}) and also, $(\Psi_i^0,\Phi_i^0)$ solves (\ref{linear-equ}), then

\begin{align*}
 \int_{\mathbb{R}^N} (V_i^p \Phi_i ^0+U_i ^q \Psi_i ^0 )dx
 = & \int_{\mathbb{R}^N} \Big[(-\Delta U_i)   \Phi_i^0+ (-\Delta V_i) \Psi_i^0\Big] dx\\
 = & \int_{\mathbb{R}^N} \Big[ U_i   (-\Delta \Phi_i^0)+  V_i  (-\Delta\Psi_i^0)\Big] dx\\
 = & \int_{\mathbb{R}^N}\Big[ qU_i^q\Psi_i ^0+pV_i^p\Psi_i^0\Big] dx,
\end{align*}
which follows that
\begin{equation*}
 \int_{\mathbb{R}^N} (V_i^p \Phi_i^0+U_i^q \Psi_i^0 )dx=0.
\end{equation*}
Thus, we get $\int_{\mathbb{R}^N} U^q_{1,0}(y)\Psi_{1,0}^0(y)dx=0$,
also,
$
\int_{\mathbb{R}^N \backslash {\{\cup_{i=1 }^k \frac{B_i-\xi_i}{\mu_i}}\}}
U^q_{1,0}(y)\Psi_{1,0}^0(y)dy=0.
$
From (\ref{maa}) and
Lemma \ref{zsj}, there holds
 \begin{align*}
  &   \int_\Omega \Big(\sum\limits_{i=1}^kU_i\Big)^q\ln\ln\Big(e+\sum\limits_{i=1}^k
        U_i \Big)\Psi^0_i dx\nonumber\\
  = & \frac{q+1}{N}
  \sum\limits_{i=1}^k \frac{1}{|\ln\mu_i|}
   \int_{\mathbb{R}^N}
     U^q_{1,0}(y)
     \ln \Big(U_{1,0}(y)\Big)\Psi_{1,0}^0(y)dy +O(\mu^{[(N-2)p-2]q})\nonumber\\
   = &  -  \frac{q+1}{N}
  \sum\limits_{i=1}^k \frac{1}{|\ln\mu_i|}\mathcal{A}_1
  +O(\mu^{[(N-2)p-2]q}),
 \end{align*}
 where the estimate of $\mathcal{A}_1$  is   postponed until the end of this lemma.
Arguing similarly,  if $j\neq i$, one has
\begin{equation*}
\int_\Omega\bigg| \Big (\sum\limits_{i=1}^kU_i \Big)^q\ln\ln \Big(e+\sum\limits_{i=1}^k
        U_i  \Big)\Psi_{jh}\bigg|dx
        =
 O(\mu^{[(N-2)p-2]q}).
\end{equation*}
Consequently,
\begin{eqnarray*}
  Q_3 &  = &
- \int_\Omega \Big[ g_\epsilon(\mathbf{P}U_{\mathbf{d},\boldsymbol\xi})
 -g_0(\mathbf{P}U_{\mathbf{d},\boldsymbol\xi}) \Big]\Psi_{jh}dx \nonumber\\
 &  = &
\left\{ \arraycolsep=1.5pt
   \begin{array}{lll}
  -\frac{q+1}{N} \mathcal{A}_1
  \sum\limits_{i=1}^k \frac\epsilon{|\ln\mu_i|}
  + O\Big(\sum\limits_{i=1}^k \frac\epsilon{|\ln\mu_i|}\Big)\ \   \ & {\rm if}\  h=0, \\[2mm]
  O\Big(\epsilon\mu^{\frac{Np}{q+1}} \ln|\ln\mu|\Big)\ \  \  & {\rm if}\  h=1,\cdots,N. \\[2mm]
\end{array}
\right.
\end{eqnarray*}

Finally, let us state that $\mathcal{A}_1$ is a positive constant.
Let $B(0,\tilde{r})$ be a ball with a fixed  $\tilde{r}>0$,
there holds
\begin{align*}
\mathcal{A}_1 = & - \int_{\mathbb{R}^N}
      U^q_{1,0}(y)
     \ln \Big(U_{1,0}(y)\Big)\Psi_{1,0}^0(y)dy\\
  = &- \int_{B(0,\tilde{r})}
      U^q_{1,0}(y)
     \ln \Big(U_{1,0}(y)\Big)\Psi_{1,0}^0(y)dy
     - \int_{\mathbb{R}^N\setminus B(0,\tilde{r})}
      U^q_{1,0}(y)
     \ln \Big(U_{1,0}(y)\Big)\Psi_{1,0}^0(y)dy,
\end{align*}
In a ball $B(0,\tilde{r})$,
the result of (\ref{max}) means that
$ 0<U_{1,0}(x)\leq 1$,
 then, by the continuity of logarithm function, we know that $\ln( U_{1,0}(x))$ is a negative function.
Thus, there is a constant $C>0$ such that
\begin{align*}
  \int_{B(0,\tilde{r})}
      U^q_{1,0}(y)
     \ln \Big(U_{1,0}(y)\Big)\Psi_{1,0}^0(y)dy
     \leq C
\end{align*}
In $\mathbb{R}^N\setminus B(0,\tilde{r})$,
by (\ref{pssi}) and Lemma \ref{ls},  it holds
\begin{align*}
   \int_{\mathbb{R}^N\setminus B(0,\tilde{r})}
      U^q_{1,0}(y)
     \ln \Big(U_{1,0}(y)\Big)\Psi_{1,0}^0(y)dy
     \rightarrow -\infty.
\end{align*}
Consequently $\mathcal{A}_1>0$.
\end{proof}

\begin{lemma}\label{pf1R}
For $j=1,\cdots,k$, there holds
\begin{eqnarray*}
P_2&  = &
- \int_\Omega \bigg[f_\epsilon\Big(\sum_{i=1}^{k}PV_{i}\Big)
 -f_0\Big(\sum_{i=1}^{k}PV_{i}\Big)\bigg]P\Phi_{jh}dx \nonumber\\
   &  = &
\left\{ \arraycolsep=1.5pt
   \begin{array}{lll}
  -\frac{p+1}{N} \mathcal{\tilde{A}}_1
  \sum\limits_{i=1}^k \frac\epsilon{|\ln\mu_i|}
  + O\Big(\sum\limits_{i=1}^k \frac\epsilon{|\ln\mu_i|}
  +  \epsilon\mu^{\frac{Nq}{q+1}}
   (\ln|\ln\mu|)\Big) \ \   \ & {\rm if}\  h=0, \\[2mm]
  O \Big(\epsilon\mu^{\frac{Nq}{q+1}}
   (\ln|\ln\mu|)\Big)\ \  \  & {\rm if}\  h=1,\cdots,N. \\[2mm]
\end{array}
\right.
\end{eqnarray*}
where $\mathcal{\tilde{A}}_1$  is given  in  Proposition \ref{leftside}
and  its estimate is similar to $\mathcal{A}_1$.
\end{lemma}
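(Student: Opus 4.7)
The plan is to adapt the proof of Lemma~\ref{pf1} for $Q_3$ to the present dual setting, replacing $g_\epsilon$ by $f_\epsilon$ (exponent $p$ instead of $q$), the $U$-bubble by the $V$-bubble, and $\Psi_{jh}$ by the projected kernel $P\Phi_{jh}$. First I Taylor expand pointwise in $\epsilon$: by Lemma~\ref{zsj}(i),
\[
f_\epsilon(s) - f_0(s) = -\epsilon s^p \ln\ln(e+s) + O\bigl(\epsilon^2 s^p [\ln\ln(e+s)]^2\bigr),
\]
so that
\[
P_2 = \epsilon \int_\Omega \Bigl(\sum_{i=1}^k PV_i\Bigr)^p \ln\ln\!\Bigl(e+\sum_{i=1}^k PV_i\Bigr) P\Phi_{jh}\, dx + O\bigl(\epsilon^2 (\ln|\ln\mu|)^2\bigr),
\]
where the quadratic remainder is controlled exactly as in (\ref{ma1})--(\ref{ma}). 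I then successively replace $\sum PV_i$ by $\sum V_i$ (via Lemma~\ref{uaoe}, producing errors of size $\mu^{N/(q+1)}$) and $P\Phi_{jh}$ by $\Phi_{jh}$ (via (\ref{gisewr2}), of size $\mu^{Np/(q+1)}$ for $h=0$ and one order better for $h\ge 1$), using H\"older's inequality with $(\sum V_i)^p\in L^{(p+1)/p}$ together with Lemma~\ref{ysy} for the sum-splitting estimates.

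Having reduced the analysis to
\[
\mathcal{J}_{jh}:=\epsilon \int_\Omega \Bigl(\sum_{i=1}^k V_i\Bigr)^p \ln\ln\!\Bigl(e+\sum_{i=1}^k V_i\Bigr)\Phi_{jh}\, dx,
\]
I localize to $B_i = B(\xi_i, r)$ (the complement being $O(\mu^\infty)$ by Lemma~\ref{ls}) and change variables $x = \mu_i y+\xi_i$. The Jacobians combine to $\mu_i^{N-Np/(p+1)-N/(p+1)} = 1$ when $j=i$, while the cross contributions $j\neq i$ are polynomially small by Lemma~\ref{ls}. Applying Lemma~\ref{zsj}(v) splits
\[
\ln\ln\!\bigl(e+\mu_i^{-N/(p+1)} V_{1,0}(y)+o(1)\bigr) = \ln\bigl|\ln\mu_i^{-N/(p+1)}\bigr| + R_{\mu_i}(y),
\]
with $|\ln\mu_i| R_{\mu_i}(y)\to \tfrac{p+1}{N}\ln V_{1,0}(y)$ by Lemma~\ref{zsj}(vi). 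The coefficient of the leading $\ln|\ln\mu_i|$-piece is $\int_{\mathbb R^N} V_{1,0}^p\,\Phi_{1,0}^h\,dy$, which vanishes for every $h\in\{0,1,\dots,N\}$: for $h\ge 1$ by the odd parity of $\Phi_{1,0}^h=\partial_l V_{1,0}$ against the radial $V_{1,0}^p$, and for $h=0$ by differentiating the scale-invariant identity $\int_{\mathbb R^N} V_\mu^{p+1}\equiv\int_{\mathbb R^N} V_{1,0}^{p+1}$ in $\mu$.

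For $h=0$, the $|\ln\mu_i|^{-1}$ correction then supplies the main term
\[
\epsilon \sum_{i=1}^k \frac{p+1}{N|\ln\mu_i|}\int_{\mathbb R^N} V_{1,0}^p\ln V_{1,0}\,\Phi_{1,0}^0\, dy = -\frac{p+1}{N}\mathcal{\tilde{A}}_1 \sum_{i=1}^k \frac{\epsilon}{|\ln\mu_i|},
\]
matching the claim; the positivity $\mathcal{\tilde{A}}_1>0$ is established by the region-splitting used for $\mathcal{A}_1$ in Lemma~\ref{pf1}, using the explicit form of $\Phi_{1,0}^0 = x\cdot\nabla V_{1,0} + NV_{1,0}/(p+1)$ from (\ref{pssi}) together with the decay of $V_{1,0}$ and $\nabla V_{1,0}$ from Lemma~\ref{ls}. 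For $h=1,\dots,N$, the odd parity of $\Phi_{1,0}^h$ also kills $\int V_{1,0}^p \ln V_{1,0}\,\Phi_{1,0}^h\, dy$, so the entire principal contribution vanishes and only the higher-order errors---arising from $P\Phi_{jh}-\Phi_{jh}$, the cross terms $j\neq i$, and the sum-splitting in Lemma~\ref{ysy}---survive, producing the stated $O\bigl(\epsilon\mu^{Nq/(q+1)}\ln|\ln\mu|\bigr)$ bound.

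The main obstacle is the careful bookkeeping of all subdominant contributions, consistently tracking the powers of $\mu$ via the critical hyperbola identity $(N-2)p-2 = N(p+1)/(q+1)$ so that the projection errors, cross-bubble interactions, and $\epsilon^2$ remainders are all absorbed into the stated remainder terms. The delicate point is that the orthogonalities $\int V_{1,0}^p \Phi_{1,0}^h\,dy=0$ (for all $h$) and, for $h\ge 1$, $\int V_{1,0}^p \ln V_{1,0}\,\Phi_{1,0}^h\, dy=0$ are what make the main term of size $\epsilon/|\ln\mu|$ rather than $\epsilon\ln|\ln\mu|$; these must be verified carefully by parity and by the scale-invariance argument above.
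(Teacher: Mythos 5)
Your proposal is correct and follows the same essential route as the paper: the paper splits $P_2$ into the $\Phi_{jh}$-piece and the $(P\Phi_{jh}-\Phi_{jh})$-piece, disposes of the latter by H\"older (using the $L^{(p+1)/p}$ bound (\ref{onran}) and (\ref{gisewr2})), and simply cites ``as in the proof of Lemma~\ref{pf1}'' for the former, which is exactly the calculation you carry out explicitly in the $V,\Phi$ variables---Taylor expanding in $\epsilon$, rescaling at each $\xi_i$, splitting the $\ln\ln$ via Lemma~\ref{zsj}(v)--(vi), and using the orthogonalities $\int V_{1,0}^p\Phi_{1,0}^h\,dy=0$ (scale invariance for $h=0$, parity for $h\ge1$) to isolate the $\epsilon/|\ln\mu_i|$ coefficient $\mathcal{\tilde A}_1$. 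Your bookkeeping of the projection and sum-splitting errors matches the mechanisms the paper invokes (Lemmas~\ref{uaoe}, \ref{ysy}, \ref{zsj}); the only difference is organizational, in that you Taylor-expand first and then perform the replacements, whereas the paper separates the two projection errors up front, but both reduce to the same core computation.
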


\begin{proof}
Observe that
\begin{align}\label{mrfa}
P_2= & - \int_\Omega \bigg[f_\epsilon\Big(\sum_{i=1}^{k}PV_{i}\Big)
 -f_0\Big(\sum_{i=1}^{k}PV_{i}\Big)\bigg]P\Phi_{jh}dx \nonumber\\
= & - \int_\Omega \bigg[f_\epsilon\Big(\sum_{i=1}^{k}PV_{i}\Big)
 -f_0\Big(\sum_{i=1}^{k}PV_{i}\Big)\bigg]\Phi_{jh}dx \nonumber\\
 & - \int_\Omega \bigg[f_\epsilon\Big(\sum_{i=1}^{k}PV_{i}\Big)
 -f_0\Big(\sum_{i=1}^{k}PV_{i}\Big)\bigg](P\Phi_{jh}-\Phi_{jh}) dx.
\end{align}
The first term  can be estimated as in the proof of Lemma \ref{pf1}, we omit the details here.
Now, we are going to deal with the second term in (\ref{mrfa}),
a direct calculation deduces that
\begin{align*}
& \int_\Omega \bigg[f_\epsilon\Big(\sum_{i=1}^{k}PV_{i}\Big)
 -f_0\Big(\sum_{i=1}^{k}PV_{i}\Big)\bigg](P\Phi_{jh}-\Phi_{jh}) dx\nonumber\\
\leq  &
\Big|f_\epsilon\Big(\sum_{i=1}^{k}PV_{i}\Big)
 -f_0\Big(\sum_{i=1}^{k}PV_{i}\Big)\Big|_{L^{\frac{p+1}{p}}(\Omega)}
 \Big|P\Phi_{jh}-\Phi_{jh}\Big|_{L^{p+1}(\Omega)} .
\end{align*}
Now, we use  Lemma \ref{zsj}  to find that
\begin{align}\label{onqan}
 \int_\Omega \bigg|f_\epsilon
  \Big(\sum_{i=1}^{k}PV_{i}\Big)-f_0\Big(\sum_{i=1}^{k}PV_{i}\Big)
  \bigg|^{\frac{p+1}{p}}dx
  \leq  &  \epsilon \int_\Omega \bigg|\Big(\sum_{i=1}^{k}PV_{i}\Big)^p
  \ln\ln\Big(e+\sum_{i=1}^{k}PV_{i}\Big)\bigg|^{\frac{p+1}{p}}dx \nonumber\\
  \leq  & \epsilon \int_\Omega\bigg|\Big(\sum\limits_{i=1}^k V_i\Big)^p
    \ln\ln \Big(e+ \sum\limits_{i=1}^k V_i \Big)\bigg|^{\frac{p+1}{p}}dx \nonumber\\
  \leq  &
  \epsilon \int_{\Omega\setminus \cup_{i=1 }^k B_i} \bigg| \Big(\sum\limits_{i=1}^k V_i\Big)^p
  \ln\ln \Big(e+\sum\limits_{i=1}^k V_i\Big)\bigg|^{\frac{p+1}{p}}dx \nonumber\\
  & + \epsilon \sum_{i=1}^k\int_{B_i} \bigg| \Big(\sum\limits_{i=1}^k V_i\Big)^p
  \ln\ln \Big(e+\sum\limits_{i=1}^k V_i\Big)\bigg|^{\frac{p+1}{p}}dx.
\end{align}
By (\ref{bubble}), one has
\begin{align}\label{ksdu}
   \int_{\Omega\setminus \cup_{i=1}^k B_i} \bigg| \Big(\sum\limits_{i=1}^k V_i\Big)^p
  \ln\ln \Big(e+\sum\limits_{i=1}^k V_i\Big)\bigg|^{\frac{p+1}{p}}dx
  \leq   &  C \sum\limits_{i=1}^k \int_{\Omega\setminus \cup_{i=1 }^k B_i} \bigg| V_i^p
  \ln\ln \Big(e+\sum\limits_{i=1}^k V_i\Big)\bigg|^{\frac{p+1}{p}}dx\nonumber\\
  \leq &  C\sum\limits_{i=1}^k \mu_i^{N}
    \bigg|
  \ln\ln \Big(e+\sum\limits_{i=1}^k \mu_i^{-\frac{N}{p+1}}\Big)\bigg|^{\frac{p+1}{p}}\nonumber\\
  \leq &  C  \Big(\mu^{\frac{Np}{p+1}}
   (\ln|\ln\mu|) \Big)^{\frac{p+1}{p}}.
\end{align}
To prove     the second integral in (\ref{onqan}),
 we first note that
\begin{align}\label{ksu}
& \int_{B_i} \bigg|\Big (\sum\limits_{i=1}^k V_i\Big)^p
  \ln\ln \Big(e+\sum\limits_{i=1}^k V_i\Big)\bigg|^{\frac{p+1}{p}}dx\nonumber\\
 = & \int_{B_i} \bigg| \Big(V_i+\sum_{j\neq i}^kV_j\Big)^p
  \ln\ln \Big(e+V_i+\sum_{j\neq i}^kV_j\Big)\bigg|^{\frac{p+1}{p}}dx\nonumber\\
 = &  \int_{B_i} \bigg|\Big[V_i^p+ O\Big(\sum_{j\neq i}^kV_j\Big)    \Big]\ln\ln \Big(e+V_i+\sum_{j\neq i}^kV_j\Big)\bigg|^{\frac{p+1}{p}}dx\nonumber\\
 \leq  & C \int_{B_i} \bigg|V_i^p\ln\ln \Big(e+V_i+\sum_{j\neq i}^kV_j\Big)\bigg|^{\frac{p+1}{p}}dx
  + C\sum_{j\neq i}^k\int_{B_i}V_j \bigg|\ln\ln \Big(e+V_i+\sum_{j\neq i}^kV_j\Big)\bigg|^{\frac{p+1}{p}}dx.
\end{align}
We  set $x-\xi_i=\mu_i y$, by (\ref{bubble}) and Lemma \ref{zsj}, we have
\begin{align*}
&    \int_{B_i} \bigg|V_i^p\ln\ln \Big(e+V_i+\sum_{j\neq i}^kV_j\Big)\bigg|^{\frac{p+1}{p}}dx\\
= & \int_{B_i}
\bigg|\mu_i^{-\frac{Np}{p+1}}V^p_{1,0} (\frac{x-\xi_i}{\mu_i} )
  \ln\ln \Big(e+\mu_i^{-\frac{N}{p+1}}V_{1,0} (\frac{x-\xi_i}{\mu_i} )
  +\sum_{j\neq i}^k \mu_j^{-\frac{N}{p+1}}V_{1,0} (\frac{x-\xi_j}{\mu_j}) \Big)\bigg|^{\frac{p+1}{p}}dx\\
 = &
 \int_{\frac{B_i-\xi_i}{\mu_i}}
  \bigg| V^p_{1,0} (y)\ln\ln \Big(e+\mu_i^{-\frac{N}{p+1}}V_{1,0} (y)
  +\sum_{j\neq i}^k \mu_j^{-\frac{N}{p+1}}V_{1,0} (\frac{\mu_iy+\xi_i-\xi_j}{\mu_j}) \Big)\bigg|^{\frac{p+1}{p}}dy\\
 = & \int_{\frac{B_i-\xi_i}{\mu_i}}
  \bigg|  V^p_{1,0} (y)\bigg(\ln|\ln \mu_i^{-\frac{N}{p+1}}|
  +   \ln \Big[1+\frac{\ln\Big(e^{1-\frac{N}{p+1}|\ln \mu_i|  }
  + V_{1,0} (y)
  \Big)}{\frac{N}{p+1}|\ln \mu_i|  }
\Big]\bigg)\bigg|^{\frac{p+1}{p}}dy +o(\epsilon)\\
= & \int_{\frac{B_i-\xi_i}{\mu_i}}
 \bigg|  V^p_{1,0} (y)\bigg(
   \ln|\ln \mu_i^{-\frac{N }{p+1}}| +  \frac{1}{ |\ln \mu_i|}\frac{p+1}{N} \ln ( V_{1,0} (y))
 \bigg)\bigg|^{\frac{p+1}{p}}dy +o(\epsilon)\\
\leq & C (\ln|\ln\mu| )^{\frac{p+1}{p}}.
\end{align*}
Regarding the   second term in (\ref{ksu}),
 which can be handled by the same way for estimating
\begin{align*}
& \sum_{j\neq i}^k\int_{B_i}V_j \bigg|\ln\ln \Big(e+V_i+\sum_{j\neq i}^kV_j\Big)\bigg|^{\frac{p+1}{p}}dx
 %= & \int_{\frac{B_i-\xi_i}{\mu_i}}
 %\frac{ _n\mu_i^{-\frac{N}{p+1}} \mu_i^N }{| \mu_ly- \mu_i\sigma_i|^{N-2}}
 % \bigg| \ln\ln \mu_i^{-\frac{N-2}{2}} +   \ln \Big[1+\frac{\ln\Big(e^{1-\frac{N-2}{2}|\ln \mu_i|  }+ \frac{ _n} {1+|y-\sigma_l|^2)^{-\frac{N}{p+1}}}\Big)}{\frac{N-2}{2}|\ln \mu_i|  }
%\Big]\bigg|^{\frac{p+1}{p}}dy +o(\epsilon)\\
%= & \int_{\frac{B_i-\xi_i}{\mu_i}}
% \frac{ _n^{\frac{2n}{N-2}}  }{( 1+|y-\sigma_l|^2)^N}
%  \bigg| \ln\ln \mu_i^{-\frac{N-2}{2}} +  \frac{1}{ |\ln \mu_i|}\frac{2}{N-2 } \ln \frac{ _n}{( 1+|y-\sigma_l|^2)^{-\frac{N}{p+1}}}
%\bigg|^{\frac{p+1}{p}}dy +o(\epsilon)\\
%\leq & C (\ln|\ln\mu_1| )^{\frac{p+1}{p}}
% =O \bigg(\Big(\ln\Big|\ln\frac\epsilon{|\ln\epsilon|^2}\Big| \Big)^{\frac{p+1}{p}}\bigg).
=O\Big((\ln|\ln\mu| )^{\frac{p+1}{p}}\Big).
\end{align*}
Thus,
\begin{equation}\label{onran}
\Big|f_\epsilon\Big(\sum_{i=1}^{k}PV_{i}\Big)-f_0\Big(\sum_{i=1}^{k}PV_{i}\Big)\Big|_{\frac{p+1}{p}}
=O \Big(\epsilon\mu^{\frac{Np}{p+1}}
   (\ln|\ln\mu|)\Big).
\end{equation}
Putting (\ref{ksdu})-(\ref{onran}) and (\ref{gisewr2}) together, we obtain  \begin{align*}
 \int_\Omega \bigg[f_\epsilon\Big(\sum_{i=1}^{k}PV_{i}\Big)
 -f_0\Big(\sum_{i=1}^{k}PV_{i}\Big)\bigg](P\Phi_{jh}-\Phi_{jh}) dx
= O \Big(\epsilon\mu^{\frac{Np}{p+1}}
   (\ln|\ln\mu|)\Big).
\end{align*}

\end{proof}

\begin{lemma}\label{poi}
For $j=1,\cdots,k$, there holds
\begin{eqnarray*}
Q_5 &  = &
- \int_\Omega \Big[ g_0(\mathbf{P}U_{\mathbf{d},\boldsymbol\xi})
 -  \sum_{i=1}^{k}g_0(U_i) \Big]\Psi_{jh}dx
 \nonumber\\
   &  = &
\left\{ \arraycolsep=1.5pt
   \begin{array}{lll}
  \Big(\frac{b_{N,p}}{\gamma_N}\Big)^p \mathcal{A}_2
\mu^{\frac{N(p+1)}{q+1}}
\sum\limits_{i=1}^kd_i^{\frac{N}{q+1}}
\widetilde{H}_{\mathbf{d},\boldsymbol{\xi}}(\xi_i)  -a_{N,p}\mathcal{A}_4
\mu^{(N-2)p-2}
\sum\limits_{j\neq i}^k\frac{d_{i}^{\frac{2N}{q+1}}
d_{j}^{\frac{N(p-1)}{q+1}}}{|\xi_{i}-\xi_{j}|^{(N-2)p-2}}
 \\
\quad + O(\mu^{(N-2)p-1})
  \ \   \  {\rm if}\  h=0, \\[2mm]
  \Big(\frac{b_{N,p}}{\gamma_N}\Big)^p
  \mathcal{A}_3
  \mu^{\frac{N(p+1)}{q+1}+1}
\sum\limits_{i=1}^kd_i^{\frac{N}{q+1}+1}
\partial_{\xi_{ih}}\tilde{\rho}(\xi_i)
+  O(\mu^{(N-2)p-1})
  \ \  \   {\rm if}\  h=1,\cdots,N, \\[2mm]
\end{array}
\right.
\end{eqnarray*}
where $\mathcal{A}_2$,  $\mathcal{A}_3$  and  $\mathcal{A}_4$ are given  in  Proposition \ref{leftside}.
\end{lemma}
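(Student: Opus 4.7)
\textbf{Proof proposal for Lemma \ref{poi}.}

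The plan is to expand $g_0(\mathbf{P}U_{\mathbf{d},\boldsymbol\xi}) - \sum_{i=1}^k g_0(U_i) = (\mathbf{P}U_{\mathbf{d},\boldsymbol\xi})^q - \sum_{i=1}^k U_i^q$ around the sum of bubbles using Lemma \ref{puo}, and then exploit the concentration of the test function $\Psi_{jh}$ near $\xi_j$. By Lemma \ref{puo} one writes $\mathbf{P}U_{\mathbf{d},\boldsymbol\xi}(x) = W(x) + E(x)$ with $W = \sum_{i=1}^k U_i$ and $E(x) = -\mu^{Np/(q+1)}\big(b_{N,p}/\gamma_N\big)^p \widetilde{H}_{\mathbf{d},\boldsymbol\xi}(x) + o(\mu^{Np/(q+1)})$, and performs a Taylor expansion
\[
(W+E)^q - \sum_i U_i^q = \Big[W^q - \sum_i U_i^q\Big] + qW^{q-1}E + R,
\]
with $R = O(W^{q-2}E^2 + |E|^q)$, handled separately in the cases $q \geq 2$ and $1 < q < 2$ as in Lemma \ref{Ckls}. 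This splits $Q_5$ into a bubble-interaction integral, a regular-part integral and a remainder, and standard estimates along the lines of Lemma \ref{pfS1} show $\int R\,\Psi_{jh}\,dx = O(\mu^{(N-2)p-1})$.

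Since $\Psi_{jh}(x) = \mu_j^{-N/(q+1)}\Psi_{1,0}^h((x-\xi_j)/\mu_j)$ concentrates at $\xi_j$, the dominant contribution to the two main integrals comes from $B_j := B(\xi_j, r)$ for a fixed small $r > 0$, while the integral on $\Omega\setminus B_j$ is of higher order. On $B_j$ I would use the change of variables $x = \xi_j + \mu_j y$ together with the pointwise asymptotics $U_i(\xi_j+\mu_j y) = a_{N,p}\mu_i^{Np/(q+1)}|\xi_i-\xi_j|^{-((N-2)p-2)} + \mbox{h.o.t.}$ from Lemma \ref{ls} for $i\neq j$, and the Taylor expansion $\widetilde H_{\mathbf{d},\boldsymbol\xi}(\xi_j+\mu_j y) = \widetilde H_{\mathbf{d},\boldsymbol\xi}(\xi_j) + \mu_j y\cdot \nabla_x\widetilde H_{\mathbf{d},\boldsymbol\xi}(\xi_j) + O(\mu_j^2|y|^2)$. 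On $B_j$ the interaction integrand is then approximated by $qU_j^{q-1}\sum_{i\neq j}U_i(\xi_j)$, and the regular-part integrand by $-q(b_{N,p}/\gamma_N)^p\mu^{Np/(q+1)}U_j^{q-1}\widetilde H_{\mathbf{d},\boldsymbol\xi}(\xi_j)$, up to next-order corrections in $\mu_j y$.

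A case analysis in $h$ then closes the estimate. For $h=0$, the leading order of $\int U_j^{q-1}\Psi_{j0}\,dx$ is $\mu_j^{N/(q+1)}(\mathcal{A}_2/q)$, and combining this with the sharp decay of $U_i$ at $\xi_j$ and with the explicit form of $\widetilde H_{\mathbf{d},\boldsymbol\xi}$ in (\ref{jua}) yields both announced main contributions: the term carrying $\widetilde H_{\mathbf{d},\boldsymbol\xi}(\xi_j)$ with coefficient $\mathcal{A}_2$, and the pairwise-interaction sum with coefficient $a_{N,p}\mathcal{A}_4/|\xi_i-\xi_j|^{(N-2)p-2}$. For $h=1,\ldots,N$, because $\Psi_{1,0}^h = \partial_h U_{1,0}$ is odd in $y_h$ and $U_{1,0}$ is radially symmetric, the leading integrals $\int U_{1,0}^{q-1}\Psi_{1,0}^h = \tfrac{1}{q}\int \partial_h(U_{1,0}^q) = 0$ and $\int U_{1,0}^q\,\Psi_{1,0}^h = 0$ vanish identically; the first surviving contribution then comes from pairing the gradient term $\mu_j y\cdot \nabla_x \widetilde H_{\mathbf{d},\boldsymbol\xi}(\xi_j)$ with $U_j^{q-1}\partial_h U_{1,0}$, producing the $\partial_{\xi_{jh}}\tilde\rho(\xi_j)$ term with constant $\mathcal{A}_3 = \int U_{1,0}^q$ and carrying the extra $\mu_j$ factor that accounts for the order $\mu^{N(p+1)/(q+1)+1}$.

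The main obstacle will be to track all contributions at the precise critical order $\mu^{(N-2)p-2}$ for $h=0$. One must in particular verify that replacing $W$ by $U_j$ on $B_j$ and truncating the Taylor expansion of $\widetilde H_{\mathbf{d},\boldsymbol\xi}$ introduce only strictly subleading corrections, which rests on repeated use of the sharp pointwise asymptotics of Lemma \ref{ls} and on estimates in the spirit of Lemmas \ref{Ckls} and \ref{ysy}. A subtler bookkeeping point is matching the algebraic combinatorics of the bubble-bubble interaction so that the outcome is expressed through $\widetilde H_{\mathbf{d},\boldsymbol\xi}(\xi_j)$ together with the pairwise sum having coefficient $\mathcal{A}_4$, which should follow from the defining equation $-\Delta\widetilde G_{\mathbf{d},\boldsymbol\xi} = (\sum_i d_i^{N/(q+1)}G(x,\xi_i))^p$ and the representation (\ref{jua}).
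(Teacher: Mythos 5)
Your high-level plan—invoking Lemma~\ref{puo} to write $\mathbf{P}U_{\mathbf{d},\boldsymbol\xi}=W+E$ with $W=\sum_i U_i$, Taylor expanding $(W+E)^q-\sum_i U_i^q$ into an interaction piece $W^q-\sum_i U_i^q$, a regular piece $qW^{q-1}E$ and a remainder, localizing near the concentration points, and using the oddness of $\Psi_{1,0}^h$ for $h\geq 1$ together with the Taylor expansion of $\widetilde H_{\mathbf{d},\boldsymbol\xi}$ to produce the $\partial_{\xi_{ih}}\tilde\rho(\xi_i)$ term—is the same as the paper's.

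There is, however, a concrete gap in your $h=0$ case. You localize both the interaction piece and the regular piece on $B_j$ and reduce both to the single inner integral $\int_{B_j}U_j^{q-1}\Psi_{j0}\,dx\sim\mu_j^{N/(q+1)}\mathcal{A}_2/q$. Carrying the factor $q$ from the Taylor expansion, that integral gives the coefficient $\mathcal{A}_2=q\int U_{1,0}^{q-1}\Psi_{1,0}^0$ in both contributions. It cannot give $\mathcal{A}_4=\tfrac1q\int U_{1,0}^{q-1}$ as well: $\mathcal{A}_4$ involves $\int U_{1,0}^{q-1}\,dy$ with no $\Psi_{1,0}^0$ factor, and $\int U_{1,0}^{q-1}\Psi_{1,0}^0 = -\tfrac{N}{q(q+1)}\int U_{1,0}^q$ is a genuinely different quantity. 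In other words, from your localization the interaction term and the $\widetilde H$ term would come out with the \emph{same} constant, not with $\mathcal{A}_4$ and $\mathcal{A}_2$ respectively. Note also that your $B_j$ localization would produce $d$-exponents $d_j^{N/(q+1)}d_l^{Np/(q+1)}$ in the interaction sum, whereas the statement carries $d_i^{2N/(q+1)}d_j^{N(p-1)/(q+1)}$; these coincide only in the total power of $\mu$, not in the split between the two parameters.

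The paper arrives at $\mathcal{A}_4$ by a different mechanism: for the cross terms $\int U_i^{q-1}U_j\Psi_j^0\,dx$ with $i\neq j$ it rescales about $\xi_i$ (where the factor $U_i^{q-1}$ concentrates) and substitutes the far-field asymptotics of both $U_{1,0}$ and $\Psi_{1,0}^0$ for the remaining two factors, which leaves exactly $\int U_{1,0}^{q-1}\,dy$ and hence $\mathcal{A}_4$. Your sketch never produces this integral. As written, you assert the announced outcome for the pairwise-interaction sum without the integral you set up actually yielding it, so that step of the proposal does not close.
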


\begin{proof}
By (\ref{puo}),  we have
\begin{align}\label{uaeaa}
  Q_5   = &
- \int_\Omega \Big[ g_0(\mathbf{P}U_{\mathbf{d},\boldsymbol\xi})
 -  \sum_{i=1}^{k}g_0(U_i) \Big]\Psi_{jh}dx
 \nonumber\\
 = & \int_\Omega \Big[\sum_{i=1}^{k}g_0(U_i)- g_0(\mathbf{P}U_{\mathbf{d},\boldsymbol\xi})\Big]\Psi_{jh} dx
 \nonumber\\
 = & \int_\Omega \Big[\sum_{i=1}^{k}U_i^q- (\mathbf{P}U_{\mathbf{d},\boldsymbol\xi})^q\Big]\Psi_{jh} dx
 \nonumber\\
 = & \int_\Omega \bigg(\sum_{i=1}^{k}U_i^q- \Big[\sum_{j=1}^kU_j-\mu^{\frac{Np}{q+1}}
\Big(\frac{b_{N,p}}{\gamma_N}\Big)^p\widetilde{H}_{\mathbf{d},\boldsymbol{\xi}}(x)
+o(\mu^{\frac{Np}{q+1}})\Big]^q
 \bigg)\Psi_{jh}dx
 \nonumber\\
 = & \int_\Omega \bigg[\sum_{i=1}^{k}U_i^q- \Big(\sum_{j=1}^kU_j \Big)^q\bigg]\Psi_{jh}dx\nonumber\\
% +\sum_{j\neq i}^kU_i^{q-1}U_j
 & +q\int_\Omega\Big(\sum_{j=1}^kU_j\Big)^{q-1}
 \Big[\mu^{\frac{Np}{q+1}}
\Big(\frac{b_{N,p}}{\gamma_N}\Big)^p\widetilde{H}_{\mathbf{d},\boldsymbol{\xi}}(x)
+o(\mu^{\frac{Np}{q+1}})\Big]\Psi_{jh}dx
+o\Big(\mu^{\frac{Np}{q+1}}
\int_\Omega\Psi_{jh}dx\Big).
\end{align}
By the Taylor  expansion, we have
\begin{align}\label{iak5}
 \int_\Omega \bigg[\sum_{i=1}^{k}U_i^q- \Big(\sum_{j=1}^kU_j \Big)^q\bigg]\Psi_{jh}dx
= & \sum_{i=1}^{k}\int_{B_i}
\bigg[U_i^q
-\Big(U_i+\sum_{j\neq i}^{k}U_j\Big)^q
\bigg]\Psi_{jh}dx
+ O(\mu^{(N-2)p-1})
\nonumber\\
=& - q\sum_{i=1}^{k}\int_{B_i} U_i^{q-1} \sum_{j\neq i}^{k} U_j\Psi_{jh}dx
+ O(\mu^{(N-2)p-1}).
\end{align}
If  $h=0$, one has
\begin{align*}
&  \sum_{j\neq i}^{k}
\int_{\Omega}U_i^{q-1}(x)U_j(x) \Psi_j^0(x)dx\nonumber\\
=& \sum_{j\neq i}^{k}
 \int_{\Omega}\Big(\mu_i^{-\frac{N}{q+1}}
 U_{1,0}(\frac{x-\xi_i}{\mu_i})\Big)^{q-1}
 \mu_j^{-\frac{N}{q+1}}U_{1,0}(\frac{x-\xi_j}{\mu_j}) \mu_j^{-\frac{N}{q+1}}\Psi_{1,0}^0(\frac{x-\xi_j}{\mu_j})dx\\
= &  \sum_{j\neq i}^{k} \mu_i^{N-\frac{N(q-1)}{q+1}}\mu_j^{-\frac{2N}{q+1}}
 \int_{\frac{\Omega-\xi_i}{\mu_i}}
 U^{q-1}_{1,0}(y)
U_{1,0}(\frac{\mu_iy+\xi_i-\xi_j}{\mu_j}) \Psi_{1,0}^0(\frac{\mu_iy+\xi_i-\xi_j}{\mu_j})dy\\
= &  \sum_{j\neq i}^{k} \mu_i^{N-\frac{N(q-1)}{q+1}}\mu_j^{-\frac{2N}{q+1}}
 \int_{\frac{\Omega-\xi_i}{\mu_i}}
 U^{q-1}_{1,0}(y)
\Big[U_{1,0}(\frac{\xi_i-\xi_j}{\mu_j})
+\nabla U_{1,0}(\frac{\xi_i-\xi_j}{\mu_j})\frac{\mu_i}{\mu_j}|y|
+o(1) \Big]\\
& \times \Big[\Psi^0_{1,0}(\frac{\xi_i-\xi_j}{\mu_j})
+o(1) \Big]dy\\
= &  \sum_{j\neq i}^{k} \mu_i^{N-\frac{N(q-1)}{q+1}}\mu_j^{-\frac{2N}{q+1}}
 \int_{\frac{\Omega-\xi_i}{\mu_i}}
 U^{q-1}_{1,0}(y)
U_{1,0}(\frac{\xi_i-\xi_j}{\mu_j})
\Psi^0_{1,0}(\frac{\xi_i-\xi_j}{\mu_j})dy\\
&  + O\bigg(\int_{\frac{\Omega-\xi_i}{\mu_i}}U^{q-1}_{1,0}(y)
\Big(\frac{\xi_i-\xi_j}{\mu_j}\Big)^{-[(N-2)p-2]}|y|
\Psi^0_{1,0}(\frac{\xi_i-\xi_j}{\mu_j})dy
\bigg)\\
= &  \sum_{j\neq i}^{k} \mu_i^{N-\frac{N(q-1)}{q+1}}\mu_j^{-\frac{2N}{q+1}}
a_{N,p}
\Big(\frac{\xi_i-\xi_j}{\mu_j}\Big)^{-[(N-2)p-2]}\\
 & \times\int_{\frac{\Omega-\xi_i}{\mu_i}}
 U^{q-1}_{1,0}(y)
\Big[ \frac{\xi_i-\xi_j}{\mu_j}\nabla U_{1,0}(\frac{\xi_i-\xi_j}{\mu_j})
+\frac{N}{q+1}U_{1,0}(\frac{\xi_i-\xi_j}{\mu_j}) \Big]dy
 + O(\mu^{(N-2)p-2})\\
= &  \sum_{j\neq i}^{k} \mu_i^{N-\frac{N(q-1)}{q+1}}\mu_j^{-\frac{2N}{q+1}}
a_{N,p}
\Big(\frac{\xi_i-\xi_j}{\mu_j}\Big)^{-[(N-2)p-2]}\\
 & \times\int_{\frac{\Omega-\xi_i}{\mu_i}}
 U^{q-1}_{1,0}(y)
\bigg[ \frac{\xi_i-\xi_j}{\mu_j}
a_{N,p}[(N-2)p-2]
\Big(\frac{\xi_i-\xi_j}{\mu_j}\Big)^{-[(N-2)p-1]}\\
& +\frac{N}{q+1}
a_{N,p}\Big(\frac{\xi_i-\xi_j}{\mu_j}\Big)^{-[(N-2)p-2]}
\bigg]dy
 + O(\mu^{(N-2)p-2})\\
= & a_{N,p} \sum_{j\neq i}^{k} \mu_i^{N-\frac{N(q-1)}{q+1}}
\mu_j^{-\frac{2N}{q+1}}
\Big(\frac{\xi_i-\xi_j}{\mu_j}\Big)^{-[(N-2)p-2]}
\int_{\frac{\Omega-\xi_i}{\mu_i}}
 U^{q-1}_{1,0}(y)
dy + O(\mu^{(N-2)p-2})\\
%\leq & C\sum_{j\neq i}^{k}\Big( [(N-2)p-2]  %+\frac{N}{q+1}\Big)\mathcal{A}_3
%\mu^{2[(N-2)p-2]}a_{N,p}^2
%\frac{d_i^{N-\frac{N(q-1)}{q+1}}d_j^{-\frac{2N}{q+1}+2[(N-2)p-2]}}{|\xi_i-\xi_j|^{2[(N-2)p-2]}}\\
%& + O(\mu^{(N-2)p-2})\\
= &  a_{N,p}\mathcal{A}_4
\mu^{(N-2)p-2}
\sum\limits_{j\neq i}^k\frac{d_{i}^{\frac{2N}{q+1}}
d_{j}^{\frac{N(p-1)}{q+1}}}{|\xi_{i}-\xi_{j}|^{(N-2)p-2}}
 + O(\mu^{(N-2)p-2}).
\end{align*}
If $h=1,\cdots,N$,
\begin{align}\label{uad}
 &  \sum_{j\neq i}^{k}
\int_{\Omega}U_i^{q-1}(x)U_j(x) \Psi_{jh}(x)dx\nonumber\\
= &  \sum_{j\neq i}^{k}
 \int_{\Omega}\Big(\mu_i^{-\frac{N}{q+1}}
 U_{1,0}(\frac{x-\xi_i}{\mu_i})\Big)^{q-1}
 \mu_j^{-\frac{N}{q+1}}U_{1,0}(\frac{x-\xi_j}{\mu_j}) \mu_j^{-\frac{N}{q+1}}\Psi_{1,0}^h(\frac{x-\xi_j}{\mu_j})dx\nonumber\\
= & \sum_{j\neq i}^{k} \mu_i^{N-\frac{N(q-1)}{q+1}}\mu_j^{-\frac{2N}{q+1}}
 \int_{\frac{\Omega-\xi_i}{\mu_i}}
 U^{q-1}_{1,0}(y)
U_{1,0}(\frac{\mu_iy+\xi_i-\xi_j}{\mu_j}) \Psi_{1,0}^h(\frac{\mu_iy+\xi_i-\xi_j}{\mu_j})dy\nonumber\\
= & \sum_{j\neq i}^{k} \mu_i^{N-\frac{N(q-1)}{q+1}}\mu_j^{-\frac{2N}{q+1}}
 \int_{\frac{\Omega-\xi_i}{\mu_i}}
 U^{q-1}_{1,0}(y)
U_{1,0}(\frac{\mu_iy+\xi_i-\xi_j}{\mu_j}) \partial_{y_h}U_{1,0} (\frac{\mu_iy+\xi_i-\xi_j}{\mu_j})dy.
\end{align}
Moreover, using Lemma \ref{ls}, (\ref{uad}) becomes
\[
\sum_{j\neq i}^{k}
\int_{\Omega}U_i^{q-1}(x)U_j(x) \Psi_{jh}(x)dx
=O(\mu^{(N-2)p-2}).
\]

Now, by (\ref{psi}) and (\ref{bubble}), we obtain  for $j=i$ and $h=0$,  we have
\begin{align*}
 &  q\int_\Omega\Big(\sum_{i=1}^kU_i(x)\Big)^{q-1}
 \Big[\mu^{\frac{Np}{q+1}}
\Big(\frac{b_{N,p}}{\gamma_N}\Big)^p
\widetilde{H}_{\mathbf{d},\boldsymbol{\xi}}(x)
+o(\mu^{\frac{Np}{q+1}})\Big]\Psi_i^0(x) dx\\
= &   q \mu^{\frac{Np}{q+1}}
\Big(\frac{b_{N,p}}{\gamma_N}\Big)^p  \int_\Omega\Big(\sum_{i=1}^kU_i(x)\Big)^{q-1}
\widetilde{H}_{\mathbf{d},\boldsymbol{\xi}}(x)
 \Psi_i^0(x) dx
 +o(\mu^{\frac{Np}{q+1}})\\
= &  q \mu^{\frac{Np}{q+1}}
\Big(\frac{b_{N,p}}{\gamma_N}\Big)^p
\sum_{i=1}^k\int_{B_i} U_i^{q-1}(x)\widetilde{H}_{\mathbf{d},\boldsymbol{\xi}}(x)
 \Psi_i^0(x) dx
 +o(\mu^{\frac{Np}{q+1}})\\
= &  q \mu^{\frac{Np}{q+1}}
\mu_i^{N-\frac{N(q-1)}{q+1}}
\mu_i^{-\frac{N}{q+1}}
\Big(\frac{b_{N,p}}{\gamma_N}\Big)^p
\sum_{i=1}^k\int_{\frac{B_i-\xi_i}{\mu_i}}
 U^{q-1}_{1,0}(y)
 \widetilde{H}_{\mathbf{d},\boldsymbol{\xi}}(\mu_iy+\xi_i)
\Psi_{1,0}^0(y)dy
 +o(\mu^{\frac{Np}{q+1}})\\
= &  q \Big(\frac{b_{N,p}}{\gamma_N}\Big)^p
\mu^{\frac{N(p+1)}{q+1}}
\sum_{i=1}^kd_i^{\frac{N}{q+1}}
\widetilde{H}_{\mathbf{d},\boldsymbol{\xi}}(\xi_i)
\int_{\mathbb{R}^N}
 U^{q-1}_{1,0}(y)
\Psi_{1,0}^0(y)dy
 +O(\mu^{(N-2)p-2})\\
 = &  \Big(\frac{b_{N,p}}{\gamma_N}\Big)^p \mathcal{A}_2
\mu^{\frac{N(p+1)}{q+1}}
\sum_{i=1}^kd_i^{\frac{N}{q+1}}
\widetilde{H}_{\mathbf{d},\boldsymbol{\xi}}(\xi_i)
 +O(\mu^{(N-2)p-2}),
\end{align*}
also for $h=1,\cdots,N$ and $j=i$, by (\ref{xy}) and (\ref{bubble}),  one has
\begin{align*}
& q\int_\Omega\Big(\sum_{i=1}^kU_i\Big)^{q-1}
 \Big[\mu^{\frac{Np}{q+1}}
\Big(\frac{b_{N,p}}{\gamma_N}\Big)^p
\widetilde{H}_{\mathbf{d},\boldsymbol{\xi}}(x)
+o(\mu^{\frac{Np}{q+1}})\Big]\Psi_{ih}dx\\
= &  q \mu^{\frac{Np}{q+1}}
\Big(\frac{b_{N,p}}{\gamma_N}\Big)^p
\sum_{i=1}^k\mu_i\int_{B_i} U_i^{q-1}(x)\widetilde{H}_{\mathbf{d},\boldsymbol{\xi}}(x)
 \Psi_{ih}(x) dx
 +o(\mu^{\frac{Np}{q+1}})\\
 = &   \mu^{\frac{Np}{q+1}}
\Big(\frac{b_{N,p}}{\gamma_N}\Big)^p
\sum_{i=1}^k\mu_i
\int_{B_i}
\widetilde{H}_{\mathbf{d},\boldsymbol{\xi}}(x)
\frac{\partial }{ \partial \xi_{ih}}U_i^q(x)dx \\
  = &   \mu^{\frac{Np}{q+1}}\sum_{i=1}^k \mu_i  \Big( \mu_i^{\frac{N}{q+1}} \frac{\partial }{ \partial \xi_{ih}}
     \int_{\frac{B_i-\xi_i}{ \mu_i }} U^q_i(y)
     \widetilde{H}_{\mathbf{d},\boldsymbol{\xi}}( \mu_i y+\xi_i) dy
     - \mu_i^{\frac{N}{q+1}}
     \int_{\frac{B_i-\xi_i}{ \mu_i }} U^q_i \frac{\partial
     \widetilde{H}_{\mathbf{d},\boldsymbol{\xi}}( \mu_i y+\xi_i)}{ \partial \xi_{ih}}dy\Big)\\
  = & \Big(\frac{b_{N,p}}{\gamma_N}\Big)^p
   \mathcal{A}_3  \sum_{i=1}^k\mu_i^{\frac{N(p+1)}{q+1}+1}\Big( \frac{\partial \widetilde{H}_{\mathbf{d},\boldsymbol{\xi}}(\xi_i)}{ \partial \xi_{ih}} + o( \mu_i )\Big)\\
  = &
  \Big(\frac{b_{N,p}}{\gamma_N}\Big)^p
  \mathcal{A}_3
  \mu^{\frac{N(p+1)}{q+1}+1}
\sum_{i=1}^kd_i^{\frac{N}{q+1}+1}
\partial_{\xi_{ih}}\tilde{\rho}(\xi_i)
+O(\mu^{(N-2)p-2}).
\end{align*}
Performing a similar computation for $j\neq i$
and  $h=0,\cdots,N$, one has
\[
q\int_\Omega\Big(\sum_{i=1}^kU_i\Big)^{q-1}\Big[\mu^{\frac{Np}{q+1}}
\Big(\frac{b_{N,p}}{\gamma_N}\Big)^p\widetilde{H}_{\mathbf{d},\boldsymbol{\xi}}(x)
+o(\mu^{\frac{Np}{q+1}})\Big]\Psi_{jh}dx
=O(\mu^{(N-2)p-2}).
\]
Collecting above the estimates,  the results follows.
\end{proof}

\subsection*{Acknowledgments}
The authors were supported by National Natural Science Foundation of China 11971392.

\end{document}